\definecolor{MyGreen}{RGB}{29,162,55}   
\numberwithin{equation}{section}
\newtheorem{theorem}[subsection]{Theorem}
\newtheorem{lemma}[subsection]{Lemma}
\newtheorem{Prop}[subsection]{Proposition}
\newtheorem{conjecture}[subsection]{Conjecture}
\newtheorem{remark}[subsection]{Remark}
\theoremstyle{definition}
\newtheorem{Def}[subsection]{Definition}
\theoremstyle{remark}
\newtheorem{example}[subsection]{Example}
\newcommand{\field}{{\bf F}}
\newcommand{\F}{\mathbb{F}}
\newcommand{\GL}{\text{GL}}
\newcommand{\wta}{\widetilde{a}}
\title[Finite subgroups]
{Representations of elementary abelian $p$-groups and finite subgroups of fields}
\author{H.E.A. Campbell}
\address{Department of Mathematics\\
\hfil\break\indent University of New Brunswick, Fredricton, NB, E3B 5A3, Canada }
\email{heac@unb.ca}
\author{J. Chuai}
\address{Department of Mathematics\\
\hfil\break\indent University of New Brunswick, Fredricton, NB, E3B 5A3, Canada }
\email{jchuai@unb.ca}
\author{R.J. Shank}
\address{School of Mathematics, Statistics \&  Actuarial Science \\
\hfil\break\indent University of Kent, Canterbury, CT2 7FS, UK}
\email{R.J.Shank@kent.ac.uk}
\author{D.L. Wehlau}
\address{Department of Mathematics and Computer Science \\
\hfil\break\indent Royal Military College, Kingston, ON, K7K 7B4, Canada}
\email{wehlau@rmc.ca}
\thanks{This research was funded in part by NSERC and the University of New Brunswick.}
\subjclass{13A50}
\date{\today}
\begin{document}

\begin{abstract}
Suppose $\F$ is a field of prime characteristic $p$ and $E$ is a finite subgroup of the additive group $(\F,+)$.
Then $E$ is an elementary abelian $p$-group. We consider two such subgroups, say $E$ and $E'$, to be equivalent
if there is an $\alpha\in\F^\times:=\F\setminus\{0\}$ such that $E=\alpha E'$. In this paper we show that rational functions can be used to 
distinguish equivalence classes of subgroups and, for subgroups of prime rank or rank less than twelve, we give explicit 
finite sets of  {\it separating invariants}.
\end{abstract}

\maketitle

\section{Introduction}
We are interested in parameterising modular representations of elementary abelian $p$-groups with a view
to computing the associated rings of invariants in a systematic way;
this programme was initiated in \cite{CSW} and continued in \cite{PS}.
We note that, with a few rare exceptions, the modular representation theory of an elementary abelian $p$-group is wild;
see, for example, \cite{benson}.
However, for fixed dimension, it is possible to parameterise the representations.
In Section~2 of \cite{CSW}, the authors compute the rings of invariants for the two dimensional modular
representations of elementary abelian $p$-groups.
Every such  representation is determined, up to group automorphism, by a finite subgroup of the field and 
two such representations are equivalent if and only if the corresponding subgroups are in the same orbit under the
action of the multiplicative group.
More precisely, suppose $\F$ is a field of prime characteristic $p$ and
$E$ is a finite subgroup of the additive group $(\F,+)$.
Then $E$ is an elementary abelian $p$-group. We consider two such subgroups, say $E$ and $E'$, to be equivalent
if there is an $\alpha\in\F^\times:=\F\setminus\{0\}$ such that $E=\alpha E'$.
We will show that rational functions can be used to 
distinguish equivalence classes of subgroups and, for subgroups of prime rank or rank less than twelve, we give explicit 
finite sets of  {\it separating invariants}.

A point $\underline{c}=(c_1,\ldots,c_r)\in\F^r$ determines a finite subgroup
$$E={\rm Span}_{\F_p}\{c_1,\ldots,c_r\}.$$
The rank of $E$ is $r$ if and only if $d_{r,r}(\underline{c})\not=0$, where $d_{r,r}$ is the top Dickson invariant,
see Section ~\ref{prelim} below for details. Two points determine the same subgroup if 
they lie in a common $GL_r(\F_p)$-orbit. Thus equivalence classes of rank $r$ subgroups correspond
to points in the orbit space $GL_r(\F_p)\backslash \F^r\slash\F^\times$ with $d_{r,r}(\underline{c})\not=0$.
We think of this variety as essentially a moduli space for the faithful two dimensional representations of
a rank $r$ elementary abelian $p$-group. However, since we only detect representations up to group automorphism,
it may be more appropriate to think of the variety as parameterising the associated rings of invariants.

In Section~\ref{prelim}, we review motivation and notation. We observe that rational functions in the
Dickson invariants can be used to separate those  orbits with $d_{r,r}\not=0$. A careful analysis
of the rank two case suggests that there are special orbits with
additional structure; Section~\ref{struc_sec} is devoted to an investigation of this structure,
including the assignment of a partition to each orbit.

In Section~\ref{sep_sec}, we assume $\F$ is algebraically closed and construct explicit finite
sets of separating invariants for the orbits for groups of prime rank or of rank less than twelve.
These separating invariants are monomials in $\F[d_{1,r},\ldots,d_{r-1,r},d_{r,r}^{-1}]^{\F^\times}$. 

In Section~\ref{rnk3_sec}, we consider groups of rank three. We give a generating set for the ring of invariants
 $\F[d_{1,3},d_{2,3},d_{3,3}^{-1}]^{\F^\times}$ and use these invariants to determine the partition associated to each orbit.
Section~\ref{rnk4_sec} contains the analogous analysis for groups of rank four.

In Section~\ref{rnk5_sec}, we construct a generating set for
$\F[d_{1,5},d_{2,5},d_{3,5},d_{4,5},d_{5,5}^{-1}]^{\F^\times}$. This involves finding the primitive non-negative integer solutions to
the equation
\begin{equation*}
a_1p^4+a_2p^3\left(\frac{p^2-1}{p-1}\right)
+a_3p^2\left(\frac{p^3-1}{p-1}\right)+a_4p\left(\frac{p^4-1}{p-1}\right)=a_5\left(\frac{p^5-1}{p-1}\right).
\end{equation*}
In Section~\ref{rnk5_par}, we make some progress towards determining the partitions associated to the orbits
of rank five subgroups and conclude with some conjectures on this problem.

\section{Preliminaries}\label{prelim}  

We start with a review of Section 2 of \cite{CSW}. Define a group homomorphism $\rho:(\F,+)\to GL_2(\F)$ by
\[\rho(c):=\left[ \begin{array}{cc} 1&c\\ 0&1 \end{array}\right].\]
The restriction of $\rho$ to a finite subgroup $E$ gives a two dimensional representation of $E$. 
If $E$ and $E'$ are finite subgroups of $(\F,+)$, then $\rho(E)$ and $\rho(E')$ are conjugate if and only if
there exists $\alpha\in \F^\times$ with $E=\alpha E'$.

There is a left action of $GL_2(\F)$ on the polynomial ring $\F[x,y]$ defined by taking $x=[0\, 1]$ and $y=[1\, 0]$.
For a finite subgroup $E\subset \F$, define
\[ \F[x,y]^E:=\{f\in \F[x,y]\mid f\cdot \rho(c)=f,\, \forall c\in E\}.\]
Clearly $x\in\F[x,y]^E$. A second invariant is given by
\[N_E(x,y):=\prod_{c\in E}y\cdot \rho(c)=\prod_{c\in E}(y+cx).\]
If the rank of $E$ is $r$, then using the graded reverse lexicographic order with $y>x$,
the leading term of $N_E(x,y)$ is $y^{p^r}$. Thus $\{x,N_E(x,y)\}$ is a homogeneous system of parameters for
$\F[x,y]^E$ with $\deg(x)\deg(N_E(x,y))=|E|$. Hence applying \cite[3.7.5]{DK} gives
$\F[x,y]^E=\F[x,N_E(x,y)]$.
Define 
\[F_E(t):=\prod_{c\in E}(t-c)\in\F[t].\]
Thus $F_E(t)$ is the monic polynomial in $t$ whose roots are the elements of $E$.
Observe that $F_E(t)=N_E(1,t)$ and $N_E(x,y)=x^{|E|}F_E(y/x)$.
Furthermore, for $\alpha\in\F^\times$, we have $F_{\alpha E}(t)=\alpha^{|E|}F_E(t/\alpha)$.

If $\{x_1,\ldots,x_r\}$ is a basis for $({\F_p}^r)^*$ then
\[D_r(t):=\prod_{u\in({\F_p}^r)^*}(t-u)\in\F_p[x_1,\ldots,x_r]^{GL_r(\F_p)}[t].\]
It is well-known that $D_r(t)=t^{p^r}+\sum_{i=0}^{r-1}d_{r-i,r}t^{p^i}$, where the $d_{j,r}$ are the Dickson invariants, and
\[\F_p[x_1,\ldots,x_r]^{GL_r(\F_p)}=\F_p[d_{1,r},\ldots,d_{r,r}]\]
(see, for example, \cite{Wilkerson}).
Using the inclusion $\F_p\subset \F$, we can interpret elements of 
$\F_p[x_1,\ldots,x_r]$ as polynomial functions on $\F^r$. If $\{c_1,\ldots,c_r\}\subset \F$ is 
an $\F_p$-basis for $E$, then $d_{i,r}(c_1,\ldots,c_r)\in\F$. Since 
$d_{i,r}\in \F_p[x_1,\ldots,x_r]^{GL_r(\F_p)}$, 
the result is independent of the choice of basis for $E$ and so it makes sense to define
$d_{i,r}(E):=d_{i,r}(c_1,\ldots,c_r)$. Since evaluating the coefficients of $D_r(t)$ at a basis for $E$
gives $F_E(t)$, we have the following.

\begin{Prop}
$N_E(x,y)=y^{p^r}+\sum_{i=0}^{r-1}d_{r-i,r}(E)y^{p^i}x^{p^r-p^i}$.
\end{Prop}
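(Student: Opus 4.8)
The plan is to combine the two identities already in hand: the factorisation $N_E(x,y)=x^{|E|}F_E(y/x)$, and the explicit expansion $D_r(t)=t^{p^r}+\sum_{i=0}^{r-1}d_{r-i,r}t^{p^i}$ of the Dickson polynomial, linked through the observation quoted above that evaluating the coefficients of $D_r(t)$ at an $\F_p$-basis of $E$ produces $F_E(t)$.

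First I would justify that observation carefully. Fix an $\F_p$-basis $\{c_1,\ldots,c_r\}$ of $E$ and let $\phi\colon\F_p[x_1,\ldots,x_r]\to\F$ be the $\F_p$-algebra homomorphism determined by $\phi(x_i)=c_i$, extended coefficientwise to polynomials in $t$. A linear form $u=\sum_i a_ix_i\in({\F_p}^r)^*$ is mapped to $\phi(u)=\sum_i a_ic_i\in E$, and since $(a_1,\ldots,a_r)\mapsto\sum_i a_ic_i$ is an $\F_p$-linear bijection from $\F_p^r$ onto $E$, the assignment $u\mapsto\phi(u)$ is a bijection $({\F_p}^r)^*\to E$. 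Applying $\phi$ to $D_r(t)=\prod_{u\in({\F_p}^r)^*}(t-u)$ therefore gives $\prod_{c\in E}(t-c)=F_E(t)$; applying $\phi$ instead to $D_r(t)=t^{p^r}+\sum_{i=0}^{r-1}d_{r-i,r}t^{p^i}$ and using the definition $d_{j,r}(E)=d_{j,r}(c_1,\ldots,c_r)$ gives $t^{p^r}+\sum_{i=0}^{r-1}d_{r-i,r}(E)t^{p^i}$. Hence $F_E(t)=t^{p^r}+\sum_{i=0}^{r-1}d_{r-i,r}(E)t^{p^i}$, which is in particular independent of the chosen basis.

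To finish, substitute into $N_E(x,y)=x^{|E|}F_E(y/x)$. Since the rank of $E$ is $r$ we have $|E|=p^r$, so
\[N_E(x,y)=x^{p^r}\Bigl((y/x)^{p^r}+\sum_{i=0}^{r-1}d_{r-i,r}(E)(y/x)^{p^i}\Bigr)=y^{p^r}+\sum_{i=0}^{r-1}d_{r-i,r}(E)\,y^{p^i}x^{p^r-p^i},\]
where the last equality uses $x^{p^r}(y/x)^{p^i}=y^{p^i}x^{p^r-p^i}$ for $0\le i\le r-1$. This proves the proposition. There is essentially no obstacle here: the only steps needing a moment's care are the bijectivity of $u\mapsto\phi(u)$ (which is precisely where the basis hypothesis enters, and which guarantees the coefficients $d_{j,r}(E)$ are well defined) and the exponent bookkeeping when the factor $x$ in the denominator is cleared.
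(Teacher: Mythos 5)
Your proposal is correct and follows exactly the route the paper intends: it makes explicit the observation that evaluating the coefficients of $D_r(t)$ at a basis of $E$ yields $F_E(t)$, and then substitutes into $N_E(x,y)=x^{|E|}F_E(y/x)$. The paper leaves these verifications implicit, so your write-up is simply a more detailed version of the same argument.
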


Note that, since $d_{r,r}(b_1,\ldots,b_r)$ is the product of the $\F_p$-linear combinations of $\{b_1,\ldots,b_r\}$,
we have ${\rm rank}({\rm Span}_{\F_p}\{b_1,\ldots,b_r\})=r$ if and only if  
$d_{r,r}(b_1,\ldots,b_r)\not=0$.

Our goal is to use rational functions to distinguish equivalence classes of finite subgroups.
A point $(c_1,\ldots,c_r)\in\F^r$ determines a finite subgroup $E={\rm Span}_{\F_p}\{c_1,\ldots,c_r\}$.
The rank of $E$ is $r$ if and only if $d_{r,r}(E)\not=0$. Two points determine the same subgroup if 
they lie in a common $GL_r(\F_p)$-orbit. Thus equivalence classes of rank $r$ subgroups correspond
to points in the orbit space $GL_r(\F_p)\backslash \F^r\slash\F^\times$ with $d_{r,r}(c_1,\ldots,c_r)\not=0$.
Rather than working with the Zariski open set $d_{r,r}\not=0$, it is convenient to introduce a formal inverse 
for $d_{r,r}$, say $t$, and study the resulting Zariski closed set $d_{r,r}t=1$. 
Since $d_{i,r}$ is homogeneous of degree $p^r-p^{r-i}$, for $\alpha\in\F^\times$ we have 
$d_{i,r}(\alpha E)=\alpha^{p^r-p^{r-i}}d_{i,r}(E)$. Thus $\F^\times$ acts on $d_{i,r}$ with weight $p^r-p^{r-i}$.
To ensure that the equation  $d_{r,r}t=1$ is isobaric with weight zero, we assign weight $1-p^r$ to $t$.
This defines an action of $\F^\times$ on $\F[d_{1,r},\ldots,d_{r,r},t]$.

\begin{Prop}\label{sep_prop} Suppose $\F$ is algebraically closed. Then the polynomial functions
$\F[d_{1,r},\ldots,d_{r,r},t]^{\F^\times}$ separate the orbits satisfying $d_{r,r}t=1$.
Furthermore, $\F[d_{1,r},\ldots,d_{r,r},t]^{\F^\times}$ is generated by  $d_{r,r}t$ and a finite  set of monomials
in $\F[d_{1,r},\ldots,d_{r-1,r},t]$. Thus equivalence classes of rank $r$ subgroups
are distinguished by a finite set of rational functions given by monomials in $\F[d_{1,r},\ldots,d_{r-1,r},d_{r,r}^{-1}]$.
\end{Prop}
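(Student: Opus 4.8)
The plan is to reduce assertion~(i) to the classical fact that a finite group acting on affine space over an algebraically closed field has invariants separating orbits, and then to extract~(iii) from a combinatorial description of the invariant monomials that simultaneously proves~(ii). Set $W=\{(d_{1,r},\ldots,d_{r,r},t)\in\F^{r+1}:d_{r,r}t=1\}$ and write $w_i:=p^r-p^{r-i}$, so $w_r=p^r-1$ and $t$ has weight $-w_r$. On $W$ the coordinate $d_{r,r}$ is a unit carrying the nonzero weight $w_r$; since $\F$ is algebraically closed, raising to the $w_r$-th power is surjective on $\F^\times$, so every $\F^\times$-orbit on $W$ meets the slice $W_1:=W\cap\{d_{r,r}=1\}\cong\F^{r-1}$, and the subgroup of $\F^\times$ preserving $W_1$ is the finite group $\mu:=\mu_{p^r-1}(\F)$ of order $p^r-1$, acting on $\F^{r-1}$ with weight $w_i$ on $d_{i,r}$. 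Restricting functions along $W_1\hookrightarrow W$ carries $\F[d_{1,r},\ldots,d_{r,r},t]^{\F^\times}$ \emph{onto} $\F[\F^{r-1}]^{\mu}$: the image plainly consists of $\mu$-invariants, and conversely a $\mu$-invariant monomial $d_{1,r}^{e_1}\cdots d_{r-1,r}^{e_{r-1}}$ is the restriction of the $\F^\times$-invariant $d_{1,r}^{e_1}\cdots d_{r-1,r}^{e_{r-1}}t^{s}$ with $s=\tfrac1{w_r}\sum_i e_iw_i\in\N$. Two points of $W$ in distinct $\F^\times$-orbits may, after translating into $W_1$, be assumed to lie in distinct $\mu$-orbits (an $\F^\times$-orbit meets $W_1$ in a single $\mu$-orbit, since $\alpha$ preserves $W_1$ iff $\alpha^{w_r}=1$); they are therefore separated by $\F[\F^{r-1}]^{\mu}$ and hence by $\F[d_{1,r},\ldots,d_{r,r},t]^{\F^\times}$, which proves~(i).

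Next I would set up the dictionary between $\F^\times$-orbits on $W$ and equivalence classes of rank $r$ subgroups. A rank $r$ subgroup $E$ determines the point $\bigl(d_{1,r}(E),\ldots,d_{r,r}(E),d_{r,r}(E)^{-1}\bigr)\in W$, well defined by the Proposition preceding~\ref{sep_prop}, and $\alpha E$ corresponds to the $\alpha$-translate; this gives a map from equivalence classes of rank $r$ subgroups to $\F^\times$-orbits on $W$. It is injective because, over an algebraically closed field, the Dickson invariants separate the $GL_r(\F_p)$-orbits on $\F^r$, so $d_{i,r}(E)=d_{i,r}(\alpha E)$ for all $i$ forces $E=\alpha E$; it is surjective because any additive polynomial $T^{p^r}+a_1T^{p^{r-1}}+\cdots+a_{r-1}T^p+a_rT$ with $a_r\neq0$ is separable, its $p^r$ roots form a rank $r$ subgroup $E$ with $d_{i,r}(E)=a_i$. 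Combining this with~(i), the invariant functions distinguish equivalence classes of rank $r$ subgroups.

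For~(ii) I would argue purely combinatorially. A monomial $d_{1,r}^{e_1}\cdots d_{r,r}^{e_r}t^{e_{r+1}}$ is $\F^\times$-invariant precisely when $\sum_{i=1}^{r}e_iw_i=e_{r+1}w_r$, so $\F[d_{1,r},\ldots,d_{r,r},t]^{\F^\times}$ has the weight-zero monomials as an $\F$-basis. As $d_{r,r}t$ is itself weight zero, subtracting $\min(e_r,e_{r+1})$ copies of it from such a monomial leaves a weight-zero monomial in which $e_r$ or $e_{r+1}$ is zero; in the second case the weight equation exhibits $0$ as a sum of nonnegative terms $e_iw_i$ with $w_i>0$, forcing the residual monomial to be trivial. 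Hence every invariant monomial is a power of $d_{r,r}t$ times a weight-zero monomial in $d_{1,r},\ldots,d_{r-1,r},t$ alone, and the exponent vectors of the latter form $\N^{r}\cap H$ for a rational hyperplane $H$, a finitely generated monoid by Gordan's lemma (see \cite{DK}); a finite generating set gives~(ii). For~(iii), restricting to $W$ (substituting $t=d_{r,r}^{-1}$) sends $d_{r,r}t$ to $1$ and sends these generators to monomials in $\F[d_{1,r},\ldots,d_{r-1,r},d_{r,r}^{-1}]$; since they generate the image $\F[\F^{r-1}]^{\mu}$ of the invariant ring on $W$, and a generating set of a separating algebra is separating, they separate the $\F^\times$-orbits on $W$, i.e.\ the equivalence classes.

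The main obstacle is not any single hard estimate — finite-group invariant theory and Gordan's lemma are available off the shelf — but the careful bookkeeping of the three descriptions of the orbit space (finite subgroups of $\F$; $GL_r(\F_p)\times\F^\times$-orbits on $\{d_{r,r}\neq0\}\subset\F^r$; $\F^\times$-orbits on $W$), together with the point that restriction to the slice $W_1$ is \emph{surjective}, not injective, onto $\F[\F^{r-1}]^{\mu}$ — it is this surjectivity that permits passing between invariants on $\F^{r+1}$ and the explicit rational functions of~(iii).
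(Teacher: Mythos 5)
Your argument is correct, but it reaches the conclusion by a genuinely different route from the paper's. The paper's proof is a short application of geometric invariant theory: $\F^\times$ is linearly reductive, so $\F[d_{1,r},\ldots,d_{r,r},t]^{\F^\times}$ is finitely generated and separates \emph{closed} orbits; a non-closed $\F^\times$-orbit would have the unique fixed point (the origin) in its closure, and the origin does not satisfy $d_{r,r}t=1$, so every orbit on that hypersurface is closed and hence separated; finally the diagonal action gives monomial generators, and the positivity of the weights of $d_{1,r},\ldots,d_{r,r}$ forces a positive power of $t$ into every nonconstant weight-zero monomial. You instead slice the hypersurface by $d_{r,r}=1$, use algebraic closedness to show every orbit meets the slice in exactly one orbit of the finite cyclic group $\mu_{p^r-1}$, and reduce separation to the elementary fact that invariants of a finite group separate its orbits, with surjectivity of restriction onto $\F[\F^{r-1}]^{\mu}$ supplied by lifting invariant monomials; finite generation comes from Gordan's lemma applied to the monoid of weight-zero exponent vectors rather than from linear reductivity. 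Both routes are sound. Yours is more self-contained (no reductivity or closed-orbit facts needed), identifies the restricted invariant ring concretely as a ring of invariants of a finite cyclic group, and actually justifies more completely than the paper does why the generators other than $d_{r,r}t$ may be taken in $\F[d_{1,r},\ldots,d_{r-1,r},t]$: you split off powers of $d_{r,r}t$ and show the residual monomial cannot involve $d_{r,r}$, where the paper only remarks that every such monomial involves $t$. The paper's route is shorter and transfers verbatim to any torus action. Your additional paragraph establishing the bijection between $\F^\times$-orbits on the hypersurface and equivalence classes of rank $r$ subgroups (injectivity because the Dickson invariants separate $GL_r(\F_p)$-orbits, surjectivity via separable additive polynomials) is not part of the paper's proof --- it is implicit in the surrounding discussion of Section~\ref{prelim} --- but it is needed to make the final sentence of the Proposition fully rigorous, and it is correct.
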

\begin{proof}
The multiplicative group $\F^\times$ is linearly reductive (see, for example \cite[Theorem~2.2.19]{DK}).
Therefore the ring of invariants is finitely generated and the invariants separate closed orbits. 
For an $\F^\times$-action, if an orbit is not closed then the closure includes one additional point.
This point must be a closed orbit and hence a fixed point. For the given action, the only 
fixed point is the the zero vector. Thus the invariants separate the orbits satisfying $d_{r,r}t=1$.  
Since the action is diagonal, the invariants are generated by a set of monomials.
Note that the weight of $d_{i,r}$ for $i=1,\ldots,r$ 
is positive. Thus every monomial of positive degree and weight zero involves a positive power of $t$. 
\end{proof}

\begin{Def}\label{v1} For a finite subgroup of rank $r$, the following $\F^\times$-invariant rational functions are well-defined:
$$ v_1:=\frac{d_{1,r}^{(p^r-1)/(p-1)}}{d_{r,r}^{p^{r-1}}} \, {\rm \, and \,}\,
v_{1j}:=\frac{d_{j,r}d_{1,r}^{p(p^{r-j}-1)/(p-1)}}{d_{r,r}^{p^{r-j}}}
$$
for $j\in\{2,\ldots,r-1\}$.
\end{Def}

\subsection*{Groups of Rank 2}
Subgroups of rank $2$ provide an instructive special case. 
The group $\F^\times$ acts on $d_{1,2}$ with weight $p^2-p$,
$d_{2,2}$ with weight $p^2-1$ and $t$ with with weight $1-p^2$. 
It is clear that
$$\F[d_{1,2},d_{2,2},t]^{\F^\times}=\F[d_{1,2}^{p+1}t^p,d_{2,2}t].$$
The groups of rank $2$ correspond to orbits with $d_{2,2}t=1$ and the orbits of rank $2$ subgroups are separated
by the rational function $v_1={d_{1,2}^{p+1}}/{d_{2,2}^p}$.
Note that $v_1=v_1(x,y)$ is a rational function of $x,y$ and $v_1(a,b)$ is defined whenever
$\{a,b\}$ is linearly independent over $\F_p$.  
If $V$ is a rank 2 subgroup of $\F$ then we will write $v_1(V) := d_{1,2}^{p+1}(V)/d_{2,2}^p(V)$.
   
In 1911, Dickson \cite{Dickson} showed that $d_{1,2}(x,y) = -\prod_{(a,b)\in \Omega} ax^2 + bxy + y^2$ 
where the product is over all
$a,b \in \F_p$ such that $z^2+bz+a$ is an irreducible quadratic in $\F_p[z]$.  
Thus  $d_{1,2}(1,\gamma)=0$ if and only if $\gamma$ satisfies an irreducible quadratic over $\F_p$.  
In this case, ${\rm Span}_{\F_p}\{1,\gamma\}=\F_{p^2}$.
More generally if $\alpha\neq 0$ then $d_{1,2}(\alpha,\beta) = \alpha^{p^2-p}d_{1,2}(1,\beta/\alpha)=0$ 
if and only if $\beta/\alpha$ satisfies an irreducible quadratic over $\F_p$.  
If $\{\alpha,\beta\}$ is any $\F_p$-basis of $V$ then $\{1,\beta/\alpha\}$ is a basis of $\alpha^{-1}\cdot V$.  
Thus $v_1(V)=0$ if and only if $v_1(\alpha^{-1}\cdot V)=0$ if and only if $d_{1,2}(1,\beta/\alpha)=0$ if and only if
$\alpha^{-1}\cdot V = \F_{p^2}$. In other words, the $\F^\times$-orbit determined by $v_1(V)=0$ consists of the dilations of 
$\F_{p^2}\subset\F$.
This example suggests that some finite subgroups possess additional structure.   

\section{The Structure of a Subgroup}\label{struc_sec}

For a finite subgroup $E\subset (\F,+)$, define the stabiliser 
\[{\rm Stab}_{\F^\times}(E):=\{\alpha\in\F^\times\mid \alpha E= E\}.\]

\begin{Prop}\label{subspace_prop} 
Suppose $E$ a is a finite subgroup of $\F$ of rank $r$.
Then ${\rm Stab}_{\F^\times}(E)=\F_q^\times$ for some finite subfield $\F_q\subseteq \F$ with $q\leq p^r$.
Furthermore, $E$ is a vector space over $\F_{p^s}$ if and only if $\F_{p^s}\subseteq \F_q$.
\end{Prop}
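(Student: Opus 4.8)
The plan is to prove directly that $S:={\rm Stab}_{\F^\times}(E)$ together with $0$ is a subfield of $\F$; the order bound and the description of the $\F_{p^s}$-structures on $E$ will then follow from short elementary arguments.

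First I would record that $S$ is finite with $|S|\le p^r-1$. Fixing a nonzero $c\in E$, the map $\alpha\mapsto\alpha c$ sends $S$ into $E\setminus\{0\}$, and it is injective since $\F$ has no zero divisors; as $|E\setminus\{0\}|=p^r-1$, the bound follows. (The same map shows $S$ acts freely on $E\setminus\{0\}$, so in fact $|S|$ divides $p^r-1$, but only the inequality is needed.) Then I would establish the one substantive point, namely that $S\cup\{0\}$ is closed under addition: if $\alpha,\beta\in S$ with $\alpha+\beta\ne0$, then for each $e\in E$ we have $(\alpha+\beta)e=\alpha e+\beta e\in E$ because $E$ is closed under addition, so $(\alpha+\beta)E\subseteq E$, and since $\alpha+\beta\ne0$ and $E$ is finite, multiplication by $\alpha+\beta$ is a bijection of $E$; hence $(\alpha+\beta)E=E$ and $\alpha+\beta\in S$. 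Combined with the observations that $S$ is a group under multiplication containing $1$, that $-1\in S$ because $-E=E$, and consequently that $S\cup\{0\}$ contains $0$ and is closed under negation, this shows $S\cup\{0\}$ is a subfield of $\F$. It is finite, hence equals $\F_q$ for some power $q$ of $p$, and $q-1=|S|\le p^r-1$ gives $q\le p^r$.

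For the last assertion I would read ``$E$ is a vector space over $\F_{p^s}$'' as the statement that the multiplication of $\F$ by the subfield $\F_{p^s}\subseteq\F$ carries $E$ into $E$. If $\F_{p^s}\subseteq\F_q$, then every nonzero $\gamma\in\F_{p^s}$ lies in $S$, so $\gamma E=E$, and together with $0\cdot E\subseteq E$ and the additive group structure of $E$ this exhibits $E$ as an $\F_{p^s}$-subspace of $\F$. Conversely, if $E$ is $\F_{p^s}$-stable then every $\gamma\in\F_{p^s}^\times$ satisfies $\gamma E\subseteq E$, hence $\gamma E=E$ by finiteness, so $\F_{p^s}^\times\subseteq S$ and therefore $\F_{p^s}\subseteq\F_q$.

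The only real obstacle is isolating the addition-closure step; after that the argument is bookkeeping. A secondary point worth stating carefully is the meaning of ``vector space over $\F_{p^s}$'', which must refer to the module structure coming from an inclusion $\F_{p^s}\subseteq\F$; if $\F$ contains no such subfield, both sides of the equivalence are vacuously false and the statement still holds.
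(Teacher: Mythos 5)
Your proposal is correct and follows essentially the same route as the paper: both proofs hinge on showing that ${\rm Stab}_{\F^\times}(E)\cup\{0\}$ is closed under addition via distributivity (plus finiteness of $E$ to upgrade $(\alpha+\beta)E\subseteq E$ to equality), bound its order by embedding it into $E$ via multiplication by a fixed nonzero element, and then read off the $\F_{p^s}$-structure statement from the containment of $\F_{p^s}^\times$ in the stabiliser. The differences (you bound $|S|$ by $p^r-1$ rather than $|L|$ by $p^r$, and you argue both directions of the final equivalence directly) are purely cosmetic.
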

\begin{proof}
Clearly ${\rm Stab}_{\F^\times}(E)$ is closed under multiplication and contains $1$.
Define $L:={\rm Stab}_{\F^\times}(E)\cup\{0\}$. For $\alpha_1,\alpha_2\in L$ and $\gamma\in E$,
\[(\alpha_1+\alpha_2)\gamma=\alpha_1\gamma+\alpha_2\gamma\in E.\] 
Thus $L$ is a subfield of $\F$.
Choose $\gamma\in E\setminus\{0\}$. Then $1\in\gamma^{-1}E$ and 
${\rm Stab}_{\F^\times}(\gamma^{-1}E)={\rm Stab}_{\F^\times}(E)$. Therefore $L\subseteq \gamma^{-1}E$,
$q:=|L|\leq|E|=p^r$ and $E$ is a vector space over $L=\F_q$.
 Furthermore, if $E$ is a
vector space over $\F_{p^s}$, then $\F_{p^s}^\times\subseteq {\rm Stab}_{\F^\times}(E)= \F_q^\times$.
\end{proof}

\begin{theorem}\label{field-thm} Suppose $q=p^s$ and $\F_q\subset \F$. Further suppose that $E$ is a finite subgroup
of $\F$ of rank $r=ms$ for some integer $m$.
Then $E$ is a vector space over $\F_q$ if and only if $d_{i,r}(E)=0$ for
all $i$ not congruent to zero modulo $s$.
\end{theorem}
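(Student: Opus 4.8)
The plan is to reduce the statement to a fact about additive (linearized) polynomials. From the identity $F_E(t)=N_E(1,t)$ and the formula for $N_E(x,y)$ recorded in Section~\ref{prelim} we get
\[F_E(t)=t^{p^r}+\sum_{i=0}^{r-1}d_{r-i,r}(E)\,t^{p^i},\]
and by construction $F_E$ splits over $\F$ with the $p^r$ elements of $E$ as its simple roots. Since $r=ms$ and $q=p^s$, a power $t^{p^i}$ equals a power $t^{q^j}$ precisely when $s\mid i$; hence, reindexing by $k=r-i$ and using $s\mid r$, the condition ``$d_{i,r}(E)=0$ for all $i\not\equiv 0\pmod s$'' is exactly the assertion that $F_E(t)$ is a \emph{$q$-polynomial}, i.e.\ has the form $t^{q^m}+\sum_{j=0}^{m-1}c_j\,t^{q^j}$. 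So it suffices to prove that $E$ is a vector space over $\F_q$ if and only if $F_E(t)$ is a $q$-polynomial.

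If $F_E(t)=\sum_j c_j t^{q^j}$, then $F_E$ is additive in characteristic $p$ and satisfies $F_E(\lambda u)=\lambda F_E(u)$ for all $\lambda\in\F_q$, because $\lambda^{q^j}=\lambda$. Thus for $c\in E$ and $\lambda\in\F_q$ we have $F_E(\lambda c)=\lambda F_E(c)=0$, so $\lambda c$ is again a root of $F_E$ and therefore lies in $E$; since $E$ is already an additive group, it is an $\F_q$-subspace of $\F$. Conversely, suppose $E$ is an $\F_q$-subspace; as $|E|=p^r=q^m$ it has an $\F_q$-basis $e_1,\dots,e_m$. I would show by induction on $j$ that $L_j(t):=\prod_{c\in{\rm Span}_{\F_q}\{e_1,\dots,e_j\}}(t-c)$ is a monic $q$-polynomial of degree $q^j$: the case $j=1$ is $\prod_{\lambda\in\F_q}(t-\lambda e_1)=t^q-e_1^{q-1}t$, and for the inductive step one splits ${\rm Span}_{\F_q}\{e_1,\dots,e_j\}$ into the $q$ translates of ${\rm Span}_{\F_q}\{e_1,\dots,e_{j-1}\}$ to obtain $L_j(t)=\prod_{\lambda\in\F_q}L_{j-1}(t-\lambda e_j)$; since $L_{j-1}$ is $\F_q$-linear by induction and $a:=L_{j-1}(e_j)\neq 0$, this equals $\prod_{\lambda\in\F_q}\bigl(L_{j-1}(t)-\lambda a\bigr)=L_{j-1}(t)^{q}-a^{q-1}L_{j-1}(t)$, again a $q$-polynomial. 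Taking $j=m$ gives $F_E=L_m$, as required. (Alternatively, one may invoke the classical correspondence between finite-dimensional $\F_q$-subspaces of a field and separable monic $q$-linearized polynomials.)

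The coefficient bookkeeping of the first paragraph and the elementary factorizations $\prod_{\lambda\in\F_q}(X-\lambda)=X^q-X$ (and its rescalings) are routine. The one point requiring genuine care is the inductive claim that the $\F_q$-analogue $L_m(t)$ of the Dickson product is a $q$-polynomial; this is also where the hypothesis $s\mid r$ is used, since it guarantees that $E$ really is an $m$-dimensional $\F_q$-vector space.
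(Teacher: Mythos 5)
Your argument is correct, and half of it follows a genuinely different route from the paper's. For the implication ``$E$ an $\F_q$-space $\Rightarrow$ the stated vanishing'', both proofs rest on the same fact, namely that $F_E(t)=\prod_{c\in E}(t-c)$ is a $q$-linearized polynomial: the paper obtains this by evaluating the Dickson polynomial $D_{m,q}(t)$ over $\F_q$ at an $\F_q$-basis of $E$ and quoting its known shape $t^{q^m}+\sum_{j}d^{(q)}_{m-j,m}t^{q^j}$, whereas you reprove that shape from scratch via the coset-splitting induction $L_j(t)=\prod_{\lambda\in\F_q}\bigl(L_{j-1}(t)-\lambda a\bigr)=L_{j-1}(t)^q-a^{q-1}L_{j-1}(t)$ (which is exactly the standard argument behind the quoted formula), so this half is essentially the paper's argument made self-contained. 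The converse is where you truly diverge: you note that a $q$-polynomial is $\F_q$-linear, so its root set $E$ is closed under multiplication by $\F_q$ and is therefore an $\F_q$-subspace --- done. The paper instead argues through the weights of the Dickson invariants: since $\deg d_{ks,r}=p^{r-ks}(p^{ks}-1)$ is divisible by $p^s-1$, the surviving invariants satisfy $d_{i,r}(\alpha E)=d_{i,r}(E)$ for all $\alpha\in\F_q^\times$; because the Dickson invariants separate $GL_r(\F_p)$-orbits this forces $\alpha E=E$, and Proposition~\ref{subspace_prop} then converts the stabiliser statement into the vector-space statement. Your version is shorter and needs neither the orbit-separation property of the Dickson invariants nor Proposition~\ref{subspace_prop}; the paper's version has the advantage of running on the $\F^\times$-weight bookkeeping that is reused throughout Section~\ref{struc_sec}. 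Both are complete proofs.
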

\begin{proof}
Denote ${\rm hom}_{\F_q}({\F_q}^m,\F_q)$ by $({\F_q}^m)^*$, choose a basis $\{y_1,\ldots,y_m\}$ for
$({\F_q}^m)^*$ and define
\[D_{m,q}(t):=\prod_{u\in({\F_q}^m)^*}(t-u)\in\F_q[y_1,\ldots,y_m]^{GL_m(\F_q)}[t].\]
The coefficients of $D_{m,q}(t)$ are the Dickson invariants over the field $\F_q$. Thus
$D_{m,q}(t)=t^{q^m}+\sum_{j=0}^{m-1} d^{(q)}_{m-j,m}t^{q^j}$.

Suppose $E$ is a vector space over $\F_q$ with basis $\{b_1,\ldots,b_m\}$. Then
evaluating the coefficients of $D_{m,q}(t)$ at $(b_1,\ldots,b_m)$ gives a monic polynomial in
$\F[t]$ whose roots are the elements of $E$, in other words, we recover $F_E(t)$. 
We can also construct $F_E(t)$ by evaluating the coefficients of $D_r(t)$.
By comparing these constructions, we
see that $d_{r-\ell,r}(E)=0$ unless $\ell=js$. Hence $d_{i,r}(E)=0$ unless $i=js-ms=s(j-m)$.

Suppose $d_{i,r}(E)=0$ for all $i$ not congruent to zero modulo $s$.
We will show that $\F_{p^s}\subseteq {\rm Stab}_{\F^\times}(E)$ and, therefore, using 
Proposition~\ref{subspace_prop}, $E$ is a vector space over $\F_{p^s}$.
The polynomial  $d_{i,r}$ is homogeneous of degree $p^r-p^{r-i}=p^{r-i}(p^i-1)$.
If $i=sk$ then $p^s-1$ divides $p^{ks}-1=p^i-1$.
For $\alpha\in \F_{p^s}^\times$, we have $\alpha^{p^s-1}=1$ and $d_{ks,r}(\alpha E)=d_{ks,r}(E)$.
Furthermore, if $d_{i,r}(E)=0$ then $d_{i,r}(\alpha E)=0$.
Thus, if $d_{i,r}(E)=0$ for all $i$ not divisible by $s$, we have $d_{i,r}(\alpha E)=d_{i,r}(E)$
for all $i$ and all $\alpha \in \F_{p^s}^\times$.
The Dickson invariants separate $GL_r(\F_p)$-orbits. Therefore, if
$d_{i,r}(\alpha E)=d_{i,r}(E)$ for all $i$, then $E$ and $\alpha E$ are in the same $GL_r(\F_p)$-orbit,
i.e., if $\{u_1,...,u_r\}$ is a basis for $E$, then $\{\alpha u_1,...,\alpha u_r\}$ is also a
basis for $E$. Thus if $\alpha\in\F_{p^s}^\times$ then $\alpha$ stabilises $E$.
\end{proof}

 Among the subgroups of $(\F,+)$, we distinguish the dilations of finite fields:
    $V_{\alpha,r} := \alpha\cdot\F_{p^r}$ where $\alpha \in \F^\times$ and $r$ is a positive integer.
   Any finite subgroup $V \subseteq (\F,+)$ decomposes as a direct sum
    $$ V =\bigoplus_{j=1}^s V_{\alpha_j,\lambda_j} $$ where $\alpha_j \in \F^\times$ and $\lambda_j \geq 1$.
For example, if $\{\alpha_1,\ldots,\alpha_s\}$ is an $\F_p$-basis for $V$ then  $V=\oplus_{j=1}^s V_{\alpha_j,1}$.
     The decomposition is not necessarily unique.  If $r=m\cdot\ell$, then $\F_{p^r}$ is an $\F_{p^{\ell}}$ vector space
of dimension $m$. If $\{e_1,\ldots,e_m\}$ is an $\F_{p^{\ell}}$-basis for $\F_{p^r}$, then
$V_{\alpha,r}=\oplus_{i=1}^m V_{\alpha e_i,\ell}$.
    For each decomposition we define the sequence $\lambda=(\lambda_1,\lambda_2,\dots,\lambda_s)$.
The sequence is a partition of the integer
     $\dim_{\F_p}(V)$. If we order the sequences lexicographically from the left, then for each $V$ there is a
unique maximal sequence with
$\lambda_1 \geq \lambda_2 \geq \dots \geq \lambda_s$. 
We  will denote this sequence by $\lambda(V)$ and call the sequence the {\it partition}  of $V$, i.e.,  
the partition of $V$ is the largest, with respect to lex order, of the sequences
which correspond to decompositions of $V$. If $V$ has rank $2$, then $\lambda(V)$ is either $(2)$ or $(1,1)$ and
$\lambda(V)=(2)$ if and only if $(d_{1,2}^{p+1}/d_{2,2}^p)(V)=0$.

Define a function $\omega_s:\F\to\F$ by $\omega_s(t)=t^{p^s}-t$. Then $\omega_s$ is $\F_p$-linear and the 
kernel of $\omega_s$ is $\F_{p^s}$. Furthermore, $\omega_s(a)=\omega_s(b)$ if and only if $a-b\in\F_{p^s}$.
Therefore, if $V$ has rank $r$ and  $V\cap\F_{p^s}=\{0\}$ then $\omega_s(V)$ is a finite subgroup of 
rank $r$. In the following, we use the convention that $d_{0,r}=1$ and that $d_{i,r}(V)=0$ if $i<0$ or 
$i>{\rm rank}(V)$.

\begin{theorem}\label{comp-thm}
If $V$ is a finite subgroup of rank $r$ and $V=\field_{p^s}\oplus W$, where
$W$ is a finite subgroup of rank $\ell=r-s$,
then  $$d_{i,r}(V)=d_{i,\ell}\left(\omega_s\left(W\right)\right)-d_{i-s,\ell}\left(\omega_s\left(W\right)\right).$$
\end{theorem}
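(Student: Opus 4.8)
The plan is to derive the identity from the behaviour of the polynomials $F_E$ under composition. Recall that $F_E(t)=N_E(1,t)=\sum_{i=0}^{r}d_{r-i,r}(E)\,t^{p^i}$ (with the convention $d_{0,r}(E)=1$) involves only $p$-power exponents, so in characteristic $p$ it is an \emph{additive} polynomial: $F_E(a+b)=F_E(a)+F_E(b)$ for all $a,b\in\F$.

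The first step is a composition lemma: if $V_1\subseteq V_2$ are finite subgroups of $(\F,+)$ and $\phi:=F_{V_1}$, then $\phi(V_2)$ is a finite subgroup of $(\F,+)$ and $F_{V_2}(t)=F_{\phi(V_2)}\!\left(\phi(t)\right)$. To prove it, for each $d\in\phi(V_2)$ pick $c_d\in V_2$ with $\phi(c_d)=d$. Since $\phi$ is additive, $\phi(t)-d=\phi(t)-\phi(c_d)=\phi(t-c_d)=\prod_{c\in V_1}\left(t-(c_d+c)\right)$. As $c$ runs over $V_1$ the elements $c_d+c$ run over the coset $c_d+V_1\subseteq V_2$, and as $d$ runs over $\phi(V_2)\cong V_2/V_1$ these cosets partition $V_2$; taking the product over all $d$ yields $F_{\phi(V_2)}\!\left(\phi(t)\right)=\prod_{v\in V_2}(t-v)=F_{V_2}(t)$.

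Next I would specialise to $V_1=\field_{p^s}$ and $V_2=V$, which is legitimate since $\field_{p^s}\subseteq V$. The elements of the subfield $\field_{p^s}$ are precisely the roots of $t^{p^s}-t$, so $F_{\field_{p^s}}(t)=t^{p^s}-t=\omega_s(t)$. Because $\omega_s$ annihilates $\field_{p^s}$ and, by the discussion preceding the theorem, restricts to an isomorphism of $W$ onto the rank-$\ell$ subgroup $\omega_s(W)$, we get $\omega_s(V)=\omega_s(W)$. The composition lemma now gives $F_V(t)=F_{\omega_s(W)}\!\left(\omega_s(t)\right)=F_{\omega_s(W)}\!\left(t^{p^s}-t\right)$.

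Finally I would expand and compare coefficients. Writing $U:=\omega_s(W)$ and $F_U(t)=\sum_{j=0}^{\ell}d_{\ell-j,\ell}(U)\,t^{p^j}$, the Frobenius identity $(t^{p^s}-t)^{p^j}=t^{p^{s+j}}-t^{p^j}$ gives
$$F_V(t)=\sum_{j=0}^{\ell}d_{\ell-j,\ell}(U)\left(t^{p^{s+j}}-t^{p^j}\right).$$
Comparing the coefficient of $t^{p^{r-i}}$ here with the corresponding coefficient $d_{i,r}(V)$ in $F_V(t)=\sum_{k=0}^{r}d_{r-k,r}(V)\,t^{p^k}$, and using $\ell+s=r$ together with the stated conventions $d_{0,\ell}=1$ and $d_{a,\ell}=0$ for $a<0$ or $a>\ell$, produces $d_{i,r}(V)=d_{i,\ell}(U)-d_{i-s,\ell}(U)$, as claimed. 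The conceptual content is the composition lemma; the only thing needing real care afterwards is the index bookkeeping in this last coefficient comparison and making sure the out-of-range Dickson invariants are correctly set to $0$.
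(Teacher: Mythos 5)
Your proof is correct and is essentially the paper's argument: both group the factors of $\prod_{v\in V}(y+vx)$ (resp.\ $\prod_{v\in V}(t-v)$) over the cosets of $\F_{p^s}$ in $V$, recognize the result as $N_{\omega_s(W)}$ (resp.\ $F_{\omega_s(W)}$) evaluated at $\omega_s$, and then compare coefficients using additivity of these $p$-polynomials. The only difference is presentational: you work with the dehomogenized $F_E$ and package the coset computation as a general composition lemma $F_{V_2}=F_{\phi(V_2)}\circ\phi$ for $\phi=F_{V_1}$, whereas the paper carries out the same computation inline with the two-variable form $N_V(x,y)$.
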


\begin{proof}
Recall that 
$$N_V(x,y)=\sum_{i=0}^r d_{r-i,r}(V)y^{p^i}x^{p^r-p^i}.$$
Furthermore, since the elements of $\F_{p^s}$ are the roots of $t^{p^s}-t$,
we have $N_{\F_{p^s}}(x,y)=y^{p^s}-y x^{p^s-1}$. Thus $d_{s,s}(\F_{p^s})=-1$ and
$d_{i,s}(\F_{p^s})=0$ for $0<i<s$.
Using the decomposition of $V$, we get
\begin{eqnarray*}
N_V(x,y)&=&\prod_{v\in V}(y+vx)=\prod_{a\in\F_{p^s},b\in W}(y+(a+b)x)\\
&=&\prod_{b\in W}\left(\prod_{a\in\F_{p^s}}\left(\left(y+bx\right)+ax\right)\right)\\
&=&\prod_{b\in W} N_{\F_{p^s}}(x,y+bx)=\prod_{b\in W}\left((y+bx)^{p^s}-(y+bx)x^{p^s-1}\right)\\
&=&\left(x^{p^s-1}\right)^{p^{r-s}}\prod_{b\in W}\left(\left(\frac{y^{p^s}}{x^{p^s-1}}-y\right)+(b^{p^s}-b)x\right)\\
&=& \left(x^{p^s-1}\right)^{p^{r-s}}N_{\omega_s(W)}\left(x,\frac{y^{p^s}}{x^{p^s-1}}-y\right).
\end{eqnarray*}

Since $N_E(x,y)$ is an $\F_p$-linear function of $y$,
\begin{eqnarray*}
N_V(x,y)&=& \left(x^{p^s-1}\right)^{p^{r-s}}\left(N_{\omega_s(W)}\left(x,\frac{y^{p^s}}{x^{p^s-1}}\right)-N_{\omega_s(W)}(x,y)\right)\\
&=&\left(\sum_{i=0}^{r-s}d_{r-s-i,\ell}\left(\omega_s\left(W\right)\right)y^{p^{s+i}}x^{p^r-p^{s+i}}\right)
-\left(\sum_{i=0}^{r-s}d_{r-s-i,\ell}\left(\omega_s\left(W\right)\right)y^{p^i}x^{p^r-p^i}\right)\\
&=&\left(\sum_{j=s}^{r}d_{r-j,\ell}\left(\omega_s\left(W\right)\right)y^{p^{j}}x^{p^r-p^{j}}\right)
-\left(\sum_{i=0}^{r-s}d_{r-s-i,\ell}\left(\omega_s\left(W\right)\right)y^{p^i}x^{p^r-p^i}\right)\\
&=&\sum_{k=0}^{r}\left(d_{r-k,\ell}\left(\omega_s\left(W\right)\right)
-d_{r-s-k,\ell}\left(\omega_s\left(W\right)\right)\right)y^{p^{k}}x^{p^r-p^k}.
\end{eqnarray*}
Thus $d_{i,r}(V)=d_{i,\ell}\left(\omega_s\left(W\right)\right)-d_{i-s,\ell}\left(\omega_s\left(W\right)\right)$.
\end{proof}

\begin{theorem}\label{codim1} Suppose $V$ is a finite subgroup of rank $s+1$ with $s>1$. 
Then the partition of $V$ is $(s,1)$ if and only if  
$d_{i,s+1}(V)=0$ for $1<i<s$ and
\[\left(\frac{d_{1,s+1}^pd_{s,s+1}}{d_{s+1,s+1}^p}\right)\left(V\right)=1.\]
\end{theorem}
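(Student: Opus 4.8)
The plan is to prove the two implications separately, using throughout that both the stated conditions and the partition are $\F^\times$-invariant: the displayed rational function is an $\F^\times$-invariant, each condition $d_{i,s+1}(V)=0$ is unchanged if $V$ is replaced by $\alpha V$ (since $d_{i,s+1}$ is homogeneous), and $\lambda(\alpha V)=\lambda(V)$. So in either direction I may replace $V$ by a convenient dilate.

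For the forward direction, suppose $\lambda(V)=(s,1)$. Then some decomposition has type $(s,1)$, i.e.\ $V=V_{\alpha,s}\oplus V_{\beta,1}$, and after dilating by $\alpha^{-1}$ I may take $V=\F_{p^s}\oplus W$ with $W=\F_p\gamma$ of rank one; since $V$ has rank $s+1$, $\gamma\notin\F_{p^s}$. I apply Theorem~\ref{comp-thm} with $\ell=1$: $d_{i,s+1}(V)=d_{i,1}(\omega_s(W))-d_{i-s,1}(\omega_s(W))$. Here $\omega_s(W)=\F_p\delta$ with $\delta=\gamma^{p^s}-\gamma\neq 0$, and since $F_{\F_p\delta}(t)=t^p-\delta^{p-1}t$ the rank-one Dickson invariant is $d_{1,1}(\F_p\delta)=-\delta^{p-1}$ (with $d_{0,1}=1$, $d_{j,1}=0$ otherwise). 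Reading off the four relevant coefficients gives $d_{i,s+1}(V)=0$ for $1<i<s$ (both terms vanish since $i>1$ and $i-s<0$), $d_{1,s+1}(V)=-\delta^{p-1}$, $d_{s,s+1}(V)=-1$, $d_{s+1,s+1}(V)=\delta^{p-1}$; substituting into the displayed quotient gives $(-1)^{p+1}=1$ in characteristic $p$. This direction is thus a short computation.

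The reverse direction is the substantive one. Assume the conditions, and set $A=d_{1,s+1}(V)$, $B=d_{s,s+1}(V)$, $C=d_{s+1,s+1}(V)$; since the quotient equals $1$, its numerator is nonzero, so $A\neq 0$ and $B\neq 0$, while $C\neq 0$ because $V$ has rank $s+1$. Since $F_V(t)=N_V(1,t)$ and the middle Dickson invariants vanish, $F_V(t)=t^{p^{s+1}}+At^{p^s}+Bt^p+Ct$. The key step is to observe that the hypothesis $A^pB=C^p$ is exactly the condition for the factorisation
\[
F_V(t)=h\bigl(g(t)\bigr),\qquad g(t):=t^{p^s}+at,\quad h(t):=t^p+At,\quad a:=C/A
\]
to hold (expand $h(g(t))=g(t)^p+Ag(t)$ and compare coefficients; equivalently, factor $\tau^{s+1}+A\tau^s+B\tau+C=(\tau+A)(\tau^s+a)$ in the twisted polynomial ring of additive polynomials). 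Then $V=\ker F_V=\{v\in\F:g(v)\in\ker h\}$, and counting preimages under the additive map $g\colon\F\to\F$ gives $p^{s+1}=|V|\leq|\ker h|\cdot|\ker g|\leq p\cdot p^s$, forcing $|\ker g|=p^s$. Now $U:=\ker g=\{0\}\cup\{t\in\F:t^{p^s-1}=-a\}$ is a rank-$s$ subgroup of $V$, and for any nonzero $c\in U$ one has $c^{p^s-1}=-a$, whence $c\F_{p^s}\subseteq U$ and, by cardinality, $U=c\F_{p^s}=V_{c,s}$. Choosing $w\in V\setminus U$, the decomposition $V=U\oplus\F_p w=V_{c,s}\oplus V_{w,1}$ has type $(s,1)$, so $\lambda(V)\geq_{\mathrm{lex}}(s,1)$. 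The only partitions of $s+1$ that are $\geq_{\mathrm{lex}}(s,1)$ are $(s+1)$ and $(s,1)$; and $\lambda(V)=(s+1)$ would make $V$ a dilate of $\F_{p^{s+1}}\subseteq\F$, giving $d_{1,s+1}(V)=0$ by Theorem~\ref{field-thm}, contrary to $A\neq 0$. Hence $\lambda(V)=(s,1)$.

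I expect the main obstacle to be spotting the decomposition $F_V=h\circ g$ and recognising that the displayed normalisation $d_{1,s+1}^pd_{s,s+1}=d_{s+1,s+1}^p$ is precisely what makes it exist — and then upgrading $\ker g$ from an abstract rank-$s$ subgroup to an honest dilate of $\F_{p^s}$ (this is where one uses that $\ker g$ lies inside $\F$, so it actually contains a nonzero element whose $(p^s-1)$-st power is $-a$). The cardinality count pinning down $|\ker g|=p^s$, and the lex-order argument eliminating the partition $(s+1)$, are then routine, as is the forward computation.
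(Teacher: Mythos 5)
Your proposal is correct, and while your forward direction coincides with the paper's (both dilate so that $V=\F_{p^s}\oplus\gamma\F_p$ and read off the Dickson invariants from Theorem~\ref{comp-thm}), your reverse direction is a genuinely different argument. The paper stays inside the Dickson-invariant formalism: it sets $f=d_{s+1,s+1}^p-d_{1,s+1}^pd_{s,s+1}$, assembles from $f$ and the $d_{s-j,s+1}$ an explicit $GL_{s+1}(\F_p)$-invariant $H$ of degree $p^{s-1}(p^{s+1}-1)$, proves by induction modulo $\langle d_{1,s}\rangle$ that $d_{1,s}$ divides $H$, and concludes by a degree count that $H$ is a scalar multiple of the orbit product of $d_{1,s}$; hence $H(V)=0$ yields a basis on which $d_{1,s}$ vanishes, and the recursion $d_{i,s+1}=d_{i,s}^p-d_{i-1,s}D_s^{p-1}(x_{s+1})$ propagates the vanishing to all $d_{i,s}$ with $0<i<s$, so Theorem~\ref{field-thm} applies. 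You instead observe that the hypotheses say exactly that $F_V(t)=t^{p^{s+1}}+At^{p^s}+Bt^p+Ct$ with $A^pB=C^p$, recognise this as precisely the condition for the factorisation $F_V=h\circ g$ of additive polynomials with $h(t)=t^p+At$ and $g(t)=t^{p^s}+(C/A)t$, and extract the required subgroup directly as $\ker g$: the counting argument forces all $p^s$ roots of $g$ into $V$, and since the ratio of any two nonzero roots satisfies $x^{p^s}=x$, the kernel equals $c\F_{p^s}$ for any nonzero root $c$ (which, as a bonus, \emph{proves} $\F_{p^s}\subseteq\F$ rather than presupposing it --- a point worth making explicit, since writing $c\F_{p^s}\subseteq\ker g$ assumes the subfield is already there). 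Your route is shorter, more conceptual, and exhibits the $\F_{p^s}$-summand explicitly; what the paper's computation buys is the orbit-product identity itself, an explicit invariant detecting a codimension-one subgroup annihilated by $d_{1,s}$, which is the template reused in Theorem~\ref{rk4p^2} and in the rank-five conjectures. Both proofs end with the same small step (explicit in yours, implicit in the paper's) of excluding $\lambda(V)=(s+1)$ because $d_{1,s+1}(V)\neq 0$.
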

\begin{proof} Suppose $V$ has partition $(s,1)$. 
Then $\alpha V=\F_{p^s}\oplus \beta F_p$ for some $\alpha\in\F^\times$ and 
$\beta\in\F\setminus\F_{p^s}$. 
As in Section~\ref{prelim}, denote
$v_{1s}=(d_{1,s+1}^pd_{s,s+1})/(d_{s+1,s+1}^p)$. Since $v_{1s}$ is $\F^\times$-invariant, 
$v_{1s}(V)=v_{1s}(\F_{p^s}\oplus \beta F_p)$. Furthermore,
$d_{i,s+1}(V)=0$ if and only if $d_{i,s+1}(\alpha V)=0$.
Therefore, using Theorem~\ref{comp-thm},
 $d_{i,s+1}(V)=0$ for $1<i<s$, 
$d_{1,s+1}(V)=-d_{s+1,s+1}(V)$ and $d_{s,s+1}(V)=-1$, giving 
$v_{1s}(V)=1$ as required.

Define $f:=d_{s+1,s+1}^p-d_{1,s+1}^pd_{s,s+1}$. We will show that if $f(V)=0$ and
$d_{i,s+1}(V)=0$ for $1<i<s$, then $V$ contains a subgroup equivalent to
$\F_{p^s}$.

Suppose $V$ has basis $\{c_1,\ldots,c_{s+1}\}$. Using Theorem~\ref{field-thm},
if $d_{i,s}(c_1,\ldots,c_s)=0$ for $0<i<s$, then ${\rm Span}_{\F_p}\{c_1,\ldots,c_s\}$ is
 equivalent to $\F_{p^s}$. Thus, to show that the partition of $V$ is $(s,1)$, it is sufficient
to find a $g\in GL_{s+1}(\F_p)$ such that $d_{i,s}g(c_1,\ldots,c_{s+1})=0$ for $0<i<s$.
(Since $d_{i,s}\in\F_p[x_1,\ldots,x_s]\subset \F[x_1,\ldots,x_{s+1}]$, we can evaluate $d_{i,s}$ on an
element of $\F^{s+1}$.)

Recall that $d_{i,s+1}=d_{i,s}^p-d_{i-1,s}D_s^{p-1}(x_{s+1})$ (see Proposition 1.3(b) of \cite{Wilkerson}).
In particular $d_{1,s+1}=d_{1,s}^p-D_s^{p-1}(x_{s+1})$ and $d_{s+1,s+1}=-d_{s,s}D_s^{p-1}(x_{s+1})$.
Suppose there is a basis $\{b_1,\ldots,b_{s+1}\}$ for $V$ with $d_{1,s}(b_1,\ldots,b_{s+1})=0$.
Then, since $d_{2,s}^p=d_{2,s+1}+d_{1,s}D_s^{p-1}(x_{s+1})$, we have $d_{2,s}(b_1,\ldots,b_{s+1})=0$.
Similarly, $d_{i,s}(b_1,\ldots,b_{s+1})=0$ for $i=3,\ldots,s-1$. Thus it is sufficient to find a basis for
which $d_{1,s}(b_1,\ldots,b_{s+1})=0$.

Let $\Pi_s$ denote the $GL_{s+1}(\F_p)$-orbit product of $d_{1,s}$. The degree of $\Pi_s$ is
$\deg(d_{1,s})(p^{s+1}-1)/(p-1)=p^{s-1}(p^{s+1}-1)$ (look at the stabiliser of the hyperplane $x_{s+1}=0$). 
Define 
\[H:=f^{p^{s-2}}+\sum_{j=1}^{s-2}(-1)^{j+1}d_{s-j,s+1}^{p^{s-2-j}}d_{1,s+1}^{p^{(s-1-j)}(p^{j+1}-1)/(p-1)}.\]
Note that $H$ is non-zero and homogeneous of degree $p^{s-1}(p^{s+1}-1)$.
We will show that $d_{1,s}$ divides $H$ and, therefore, $H$ is non-zero scalar multiple of $\Pi_s$.
Since $H(V)=0$, we then conclude that $\Pi_s(V)=0$ and, therefore,  $d_{1,s}(gV)=0$ for some $g$.
Hence the partition of $V$ is $(s,1)$.

We work modulo the ideal $\mathfrak n:=\langle d_{1,s}\rangle$. 
We will use induction on $i$ to prove that
$$f^{p^i}+\sum_{j=1}^{i}(-1)^{j+1}d_{s-j,s+1}^{p^{i-j}}d_{1,s+1}^{p^{(i-j+1)}(p^{j+1}-1)/(p-1)}\equiv_{\mathfrak n}
(-1)^{i+1}d_{s-1-i,s}d_{1,s+1}^{(p^{i+2}-1)/(p-1)}$$
for $i\in\{0,\ldots,s-2\}$. For $i=s-2$, the left side of this equivalence is $H$ and the right is congruent to zero. 

For convenience, we use $h$ to denote $D_s^{p-1}(x_{s+1})$. Thus 
$$d_{i,s+1}=d_{i,s}^p-d_{i-1,s}h\in \F_p[d_{1,s},\ldots,d_{i,s},h]$$
and $d_{1,s+1}\equiv_{\mathfrak n}-h$. Using this equivalence,
$d_{s+1,s+1}=-d_{s,s}h\equiv_{\mathfrak n} d_{s,s}d_{1,s+1}$ and
$d_{s,s+1}\equiv_{\mathfrak n}d_{s,s}^p+d_{s-1,s}d_{1,s+1}$. Thus
$$f=d_{s+1,s+1}^p-d_{1,s+1}^pd_{s,s+1}\equiv_{\mathfrak n} d_{s,s}^pd_{1,s+1}^p-d_{1,s+1}^p\left(d_{s,s}^p+d_{s-1,s}d_{1,s+1}\right)
=-d_{s-1,s}d_{1,s+1}^{p+1}$$
which is the $i=0$ case.

Assume the result is true for $i$ and take the $p^{th}$ power:
$$
f^{p^{i+1}}+\sum_{j=1}^{i}(-1)^{j+1}d_{s-j,s+1}^{p^{i-j+1}}d_{1,s+1}^{p^{(i-j+2)}(p^{j+1}-1)/(p-1)}\equiv_{\mathfrak n}
(-1)^{i+1}d^p_{s-1-i,s}d_{1,s+1}^{p(p^{i+2}-1)/(p-1)}.$$
However, $d_{s-i-1,s}^p\equiv_{\mathfrak n}d_{s-1-i,s+1}-d_{1,s+1}d_{s-i-2,s}$. Substituting this into the 
right side of the equivalence and bringing one of the resulting terms to the left gives
\begin{eqnarray*}
f^{p^{i+1}}&+&\sum_{j=1}^{i+1}(-1)^{j+1}d_{s-j,s+1}^{p^{i+1-j}}d_{1,s+1}^{p^{((i+1)-j+1)}(p^{j+1}-1)/(p-1)}\\
&\equiv_{\mathfrak n}&
(-1)^{(i+1)+1}d_{s-1-(i+1),s}d_{1,s+1}^{(p^{(i+1)+2}-1)/(p-1)},
\end{eqnarray*}
as required.
\end{proof}

\begin{theorem}\label{embedding-thm} Suppose $\F$ is algebraically closed and 
$V$ is a finite subgroup of rank $r=s-1\geq 2$. 
Then $V$ is a subgroup of a dilation of $\F_{p^s}$ if and only if $d_{1,r}(V)\not=0$, $v^p_{12}(V)=v_1(V)$ and 
$v_{1i}^p(V)=v_{1(i-1)}(V)$
for $i\in\{3,\ldots,r-1\}$.
\end{theorem}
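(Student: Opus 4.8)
Write $s=r+1$ and let $H:=\{x\in\F_{p^s}:\Tr_{\F_{p^s}/\F_p}(x)=0\}$ be the trace‑zero hyperplane of $\F_{p^s}$, a finite subgroup of rank $r$. The plan is to show that the stated conditions on $V$ hold if and only if $V=\alpha H$ for some $\alpha\in\F^\times$. This suffices because a finite subgroup of rank $s-1$ contained in a dilation $\alpha\F_{p^s}$ is necessarily a hyperplane of it, and every hyperplane of $\F_{p^s}$ is the kernel of a functional $x\mapsto\Tr_{\F_{p^s}/\F_p}(\gamma x)$ (nondegeneracy of the trace form), hence equals $\gamma^{-1}H$ with $\gamma\in\F_{p^s}^\times$; so ``$V$ is a subgroup of a dilation of $\F_{p^s}$'' is the same as ``$V$ is a dilation of $H$''. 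I would also record that all the conditions in the statement are unchanged under $V\mapsto\alpha V$: $d_{1,r}$ is merely multiplied by a nonzero scalar, and $v_1,v_{1i}$ are $\F^\times$‑invariant by Definition~\ref{v1}.

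The computation underpinning everything is $F_H$. Viewing the trace as the polynomial $\Tr(t)=t+t^{p}+\cdots+t^{p^{r}}$, it is additive, monic of degree $p^r$, and separable, and the identity $\Tr(t)^p-\Tr(t)=t^{p^s}-t$ forces all its roots into $\F_{p^s}$; hence its root set is exactly $H$ and $F_H(t)=t^{p^r}+t^{p^{r-1}}+\cdots+t^{p}+t$, so $d_{i,r}(H)=1$ for $1\le i\le r$. Since $d_{i,r}$ is homogeneous of degree $p^r-p^{r-i}$, a dilation has $d_{i,r}(\alpha H)=\alpha^{\,p^r-p^{r-i}}$. For the forward direction I would substitute this into Definition~\ref{v1}; the exponents telescope and give $v_1(\alpha H)=v_{1i}(\alpha H)=1$ for all $i$, so the stated relations hold and $d_{1,r}(\alpha H)\ne 0$.

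For the converse, set $d_i:=d_{i,r}(V)$ and $D:=d_{r,r}(V)$. Unwinding Definition~\ref{v1}, in each equation $v_{12}^p(V)=v_1(V)$ and $v_{1i}^p(V)=v_{1(i-1)}(V)$ the power of $D$ agrees on the two sides and cancels; simplifying the remaining exponents turns the relations into $d_2^{\,p}=d_1^{\,p+1}$ and $d_i^{\,p}=d_1^{\,p}\,d_{i-1}$ for $3\le i\le r-1$, while the normalization carried by $v_1$, namely $v_1(V)=1$, reads $d_1^{(p^r-1)/(p-1)}=D^{\,p^{r-1}}$. Because $\F$ is algebraically closed and $d_1\ne 0$, pick the unique $\alpha$ with $\alpha^{\,p^{r-1}(p-1)}=d_1$; an induction on $i$, extracting one $p$‑th root per step (unique in characteristic $p$), gives $d_i=\alpha^{\,p^r-p^{r-i}}$ for $1\le i\le r-1$, and feeding $d_1$ into the $v_1$‑relation gives $D^{\,p^{r-1}}=(\alpha^{\,p^r-1})^{p^{r-1}}$, hence $D=\alpha^{\,p^r-1}$. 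Thus $d_{i,r}(V)=\alpha^{\,p^r-p^{r-i}}=d_{i,r}(\alpha H)$ for all $i$, so $F_V=F_{\alpha H}$; since a finite subgroup is the root set of its $F$‑polynomial (equivalently, since the Dickson invariants separate $GL_r(\F_p)$‑orbits), $V=\alpha H$, and by the reduction above $V$ is a subgroup of a dilation of $\F_{p^s}$.

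The main obstacle is the converse, and precisely the top Dickson invariant: the chain of $p$‑th power relations among $v_1,v_{12},\dots,v_{1(r-1)}$ constrains only $d_{1,r},\dots,d_{r-1,r}$, since $d_{r,r}$ cancels out of every $v_{1i}$, so the value $v_1(V)=1$ is exactly the input needed to pin down $d_{r,r}(V)$; once that is in hand, what remains is the routine recursive extraction of $p$‑th roots and the bookkeeping of the exponents $p^r-p^{r-i}$, and it is in these root extractions that algebraic closedness of $\F$ is used essentially.
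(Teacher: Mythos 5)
Your reduction to ``$V$ is a dilation of the trace-zero hyperplane $H$'' is a genuinely different route from the paper's: the paper instead adjoins to a basis of $V$ a root $c_s$ of $\widetilde{D_r}(t)^{p-1}-d_{1,r}^p(V)$, forms $E={\rm Span}_{\F_p}\{c_1,\dots,c_s\}$, and tests whether $E$ is a dilation of $\F_{p^s}$ via Theorem~\ref{field-thm} and the recursion $d_{i,s}=d_{i,r}^p-d_{i-1,r}D_r^{p-1}(x_s)$. Your forward direction, the identification $F_H(t)=\Tr(t)$ so that $d_{i,r}(H)=1$ for all $i$, and the observation that a rank-$(s-1)$ subgroup of $\alpha\F_{p^s}$ must be $\alpha\gamma^{-1}H$, are all correct and quite clean.

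The converse, however, has a genuine gap: you invoke ``$v_1(V)=1$'' as though it were a hypothesis, but the theorem only assumes the chain $v_{12}^p(V)=v_1(V)$ and $v_{1i}^p(V)=v_{1(i-1)}(V)$. After cancelling the common powers of $d_{r,r}$ (as you do), these become relations among $d_{1,r},\dots,d_{r-1,r}$ alone; they say each $v_{1i}(V)$ is a $p$-power of $v_{1(r-1)}(V)$ but impose no constraint on $v_{1(r-1)}(V)$, hence none on $d_{r,r}(V)$, so you cannot conclude $D=\alpha^{p^r-1}$. Concretely, for $r=3$ any rank-$3$ subgroup with $d_{1,3}(V)=d_{2,3}(V)=1$ and $d_{3,3}(V)\notin\{0,1\}$ (such $V$ exist since the Dickson map is surjective over an algebraically closed field) satisfies every stated condition but is not a dilation of $H$. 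What your argument needs is exactly the $i=s-1$ instance of the paper's conditions $d_{i,s}(E)=0$, namely $d_{r,r}^p(V)=d_{r-1,r}(V)d_{1,r}^p(V)$, i.e.\ $v_{1(r-1)}(V)=1$: the paper derives the conditions for $i\in\{1,\dots,s-1\}$ and then silently restricts to $i\le r-1$ in the final statement, so this normalization has been dropped. Your instinct about which normalization pins down $d_{r,r}$ is the right one, but as a proof of the literal statement the step ``$v_1(V)=1$'' is unjustified; you should either flag the missing hypothesis or attempt to derive it from the others (it does not follow).
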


\begin{proof}
As in the proof of Theorem~\ref{codim1}, we use the equations 
$$d_{i,s}=d_{i,r}^p-d_{i-1,r}D_{r}^{p-1}(x_s).$$ Choose a basis $\{c_1,\ldots,c_r\}$ for $V$.
Note that $D_r(t)\in \F[x_1,\ldots,x_r][t]$ and let $\widetilde{D_r}(t)$ denote the polynomial in $\F[t]$
constructed by setting $x_i$ to $c_i$ in $D_r(t)$.
Since $\F$ is algebraically closed, $\F$ contains a root of 
$\widetilde{D_r}(t)^{p-1}-d^p_{1,r}(V)$, say
$c_s$. Take $E$ to be the $\F_p$-span of $\{c_1,\ldots,c_s\}$. 
Thus $\widetilde{D_r}(c_s)^{p-1}=d_{1,r}^p(V)$ and
$d_{i,s}(E)=\left(d_{i,r}^p-d_{i-1,r}d_{1,r}^p\right)(V)$.
In particular, $d_{s,s}(E)=-d_{r,r}(V)d_{1,r}(V)$.
Hence $E$ has rank $s$ if and only if $d_{1,r}(V)\not=0$.

Using Theorem~\ref{field-thm}, $E$ is a dilation of $\F_{p^s}$
if and only if $d_{1,r}(V)\not=0$
and $0=d_{i,s}(E)= \left(d_{i,r}^p-d_{i-1,r}d_{1,r}^p\right)(V)$
for $i\in\{1,\ldots,s-1\}$. For $i>2$, if we multiply 
$d_{i,r}^p-d_{i-1,r}d_{1,r}^p$ by $d_{1,r}^{p^2(p^{r-i}-1)/(p-1)}/d_{r,r}^{p-i+1}$,
we get the $\F^\times$-invariant form $v_{1i}^p-v_{1(i-1)}$. For $i=2$,
the invariant form is $v_{12}^p-v_1$.
Thus, if $d_{1,r}(V)\not=0$, $\left(v_{12}^p-v_1\right)(V)=0$ and $\left(v_{1i}^p-v_{1(i-1)}\right)(V)=0$ for
$i\in\{3,\ldots,r-1\}$,
$V$ is contained in a dilation of $\F_{p^s}$. To prove the converse, observe that
if $V\subset W$ and $W$ is a dilation of $\F_{p^s}$ then the equations are easily seen to be satisfied.
\end{proof}

\section{Separating Invariants}\label{sep_sec}

It follows from Proposition~\ref{sep_prop} that, for $\F$ algebraically closed, the orbits of finite subgroups of rank $r$ are separated by 
the rational functions 
$$\F[d_{1,r},\ldots,d_{r-1,r},d_{r,r}^{-1}]^{\F^\times}.$$ 
In the following, whenever we consider separating invariants, we assume $\F$ is algebraically closed.
The ring $\F[d_{1,r},\ldots,d_{r-1,r},d_{r,r}^{-1}]^{\F^\times}$ is spanned by the monomials
$d_{1,r}^{a_1}\cdots d_{r-1,r}^{a_{r-1}}d_{r,r}^{-a_r}$ satisfying the equation
\begin{equation}\label{inv_eq}
        a_1p^{r-1}(p-1)+a_2p^{r-2}(p^2-1)+\cdots + a_{r-1}p(p^{r-1}-1)=a_r(p^r-1).
\end{equation}
The non-negative integer solutions to Equation~\ref{inv_eq} form a monoid. This monoid inherits a partial order from the integer lattice
${\mathbb Z}^r$. We will refer to a non-zero element of the monoid as 
{\it primitive} if it can not be written as a sum of two smaller elements. 
We will refer to a monomial with a primitive exponent sequence as a  primitive invariant.
It is clear that the set of primitive elements in the monoid 
give the exponents of a generating set for the ring of invariants. We will often abuse notation by using the same symbol to denote
the sequence $(a_1,\ldots,a_r)$ and the monomial  $d_{1,r}^{a_1}\cdots d_{r-1,r}^{a_{r-1}}d_{r,r}^{-a_r}$. 
For every $i<r$, let $\ell$ denote the greatest common divisor of $p^i-1$ and $p^r-1$. Then
$$v_i:=d_{i,r}^ad_{r,r}^{-b}\, ,$$ with $a=(p^r-1)/\ell$ and $b=p^{r-i}(p^i-1)/\ell$, is the unique primitive invariant with support 
$\{d_{i,r}, d_{r,r}^{-1}\}$. Note that this definition of $v_i$ is consistent with the definition of $v_1$ given in Definition~\ref{v1}.

If $(a_1,\ldots,a_r)$ is solution to Equation~\ref{inv_eq}, we will refer to $a_1+a_2+\cdots +a_{r-1}-a_{r}$ as the
{\it height} of the solution. This is just the degree of the monomial $d_{1,r}^{a_1}\cdots d_{r-1,r}^{a_{r-1}}d_{r,r}^{-a_r}$ if
we assign degree one to each $d_{i,r}$. Thus a primitive solution has height at least one and any non-negative solution of 
height one is primitive.


\begin{lemma}\label{vijlemma}
If the greatest common divisor of $r$ and $i$ divides $j$, then 
there exists a primitive invariant
$$v_{ij}:=d_{j,r}d_{i,r}^ad_{r,r}^{-b}.$$
\end{lemma}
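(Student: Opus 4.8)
The plan is to exhibit an explicit exponent vector $(0,\ldots,0,1,0,\ldots,0,a,-b)$ solving Equation~\ref{inv_eq} (with the $1$ in position $j$ and $a$ in position $i$), and then to argue that, after dividing out by any common factor, the resulting primitive invariant still has a nonzero coefficient in position $j$. The key point is that Equation~\ref{inv_eq} says the monomial $d_{j,r}d_{i,r}^ad_{r,r}^{-b}$ is $\F^\times$-invariant precisely when
\[ p^{r-j}(p^j-1)+a\,p^{r-i}(p^i-1)=b\,(p^r-1). \]
Let $d:=\gcd(r,i)$; by hypothesis $d\mid j$. The first step is to set $a$ so that the left side becomes divisible by $p^r-1$. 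Write $\ell_i:=\gcd(p^i-1,p^r-1)=p^d-1$ and $\ell_j:=\gcd(p^j-1,p^r-1)=p^d-1$ as well (using $d\mid j$); then $p^{r-i}(p^i-1)$ and $p^{r-j}(p^j-1)$ are both divisible by $p^d-1$ but, as elements of $\Z/(p^r-1)$, generate the same subgroup (namely the one of index $(p^r-1)/(p^d-1)$), since $p^{r-k}$ is a unit mod $p^r-1$ for any $k$. Hence there is a positive integer $a$ with $p^{r-j}(p^j-1)+a\,p^{r-i}(p^i-1)\equiv 0 \pmod{p^r-1}$; take $b$ to be the quotient. This produces a nonzero, non-negative solution to Equation~\ref{inv_eq} supported on $\{d_{j,r},d_{i,r},d_{r,r}^{-1}\}$, i.e.\ an invariant $d_{j,r}^{c}d_{i,r}^{a'}d_{r,r}^{-b'}$ with $c\geq 1$.

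The second step is to pass to a primitive invariant of the required form. Among all non-negative solutions supported inside $\{d_{j,r},d_{i,r},d_{r,r}^{-1}\}$ with strictly positive $d_{j,r}$-exponent, choose one minimal in the partial order inherited from $\Z^r$; call it $(c,a,-b)$ on the support $\{j,i,r\}$. This solution is primitive: if it were a sum of two smaller non-negative solutions, at least one summand would again have positive $d_{j,r}$-exponent (because the other generators available, $v_i=d_{i,r}^{\ast}d_{r,r}^{-\ast}$ and $v_r=d_{r,r}^0$—more precisely the only primitive invariant with support $\{d_{i,r},d_{r,r}^{-1}\}$—cannot account for the $d_{j,r}$ factor), contradicting minimality. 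Finally, $c=1$: since $d_{j,r}$ appears to the first power in our original explicit solution above—or, if the minimal solution has $c>1$, one checks that subtracting the primitive invariant $v_j=d_{j,r}^{(p^r-1)/(p^d-1)}d_{r,r}^{-\ast}$ is impossible unless $c\geq (p^r-1)/(p^d-1)$, and the explicit construction already gives a solution with $c=1$ whenever $p^{r-j}(p^j-1)$ alone is not yet a multiple of $p^r-1$; the remaining degenerate case $j\equiv 0$, handled by reducing $a$ as far as possible, also yields $c=1$. Relabelling, we obtain the asserted $v_{ij}=d_{j,r}d_{i,r}^ad_{r,r}^{-b}$.

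I expect the main obstacle to be the last step—showing the exponent of $d_{j,r}$ can be taken to be exactly $1$ in a \emph{primitive} invariant, rather than merely in \emph{some} invariant. The clean way around this is: start from the explicit solution with $d_{j,r}$-exponent $1$, and observe that any proper decomposition into smaller non-negative solutions must write $1 = c_1 + c_2$ with $c_1,c_2\geq 0$ the $d_{j,r}$-exponents of the summands, forcing one summand to have $d_{j,r}$-exponent $0$; but a non-negative solution to Equation~\ref{inv_eq} supported only on $\{d_{i,r},d_{r,r}^{-1}\}$ is a multiple of $v_i$, and subtracting a multiple of $v_i$ from our explicit solution either leaves the $d_{j,r}$-exponent at $1$ or drives $d_{i,r}$'s exponent negative. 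Hence the explicit solution is itself primitive (after possibly first reducing $a$ modulo the $d_{i,r}$-exponent of $v_i$ to land inside the fundamental domain), which is exactly the claimed $v_{ij}$. This also shows $a<$ the $d_{i,r}$-exponent of $v_i$, pinning down $a$ and $b$ uniquely.
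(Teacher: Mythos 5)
Your proposal is correct and follows essentially the same route as the paper: produce an explicit solution of Equation~\ref{inv_eq} with $d_{j,r}$-exponent equal to $1$, using $\gcd(p^{r-i}(p^i-1),p^r-1)=p^{\gcd(i,r)}-1$ together with $p^{\gcd(i,r)}-1\mid p^j-1$, and then take the minimal positive $a$ (and $b$) so that no multiple of $v_i$ can be split off, which is exactly how the paper obtains primitivity. One minor slip worth fixing: $p^{r-j}(p^j-1)$ and $p^{r-i}(p^i-1)$ need not generate the \emph{same} subgroup of ${\mathbb Z}/(p^r-1)$ (e.g.\ $r=12$, $i=2$, $j=6$); you only need, and in fact only use, the one-way containment $p^{r-j}(p^j-1)\in\langle p^{r-i}(p^i-1)\rangle$, which follows from $\gcd(i,r)\mid j$.
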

\begin{proof} To find invariants with support $\{d_{i,r},d_{j,r},d_{r,r}^{-1}\}$, we consider the equation
$$a_jp^{r-j}(p^j-1)+a_ip^{r-i}(p^i-1)=a_r(p^r-1).$$
Suppose $m$ is the greatest common divisor of $i$ and $r$. Then $p^m-1$ is the greatest common divisor of
$p^r-1$ and $p^{r-i}(p^i-1)$. 
Thus there exist positive integers
$\bar{a}$ and $\bar{b}$ with $p^m-1=\bar{b}(p^r-1)-\bar{a}p^{r-i}(p^i-1)$. 

Since $m$ divides $j$, $p^m-1$ divides $p^j-1$.
Thus $c:=p^{r-j}(p^j-1)/(p^m-1)$ is a positive integer. Define 
$a:=c\bar{a}$ and $b:=c\bar{b}$. Then $p^{r-j}(p^j-1)=b(p^r-1)-ap^{r-i}(p^i-1)$. 
Hence $a_j=1$, $a_i=a$ and $a_r=b$
is a solution to the above equation. Choose the minimal positive $a$ and $b$ to get the primitive solution.
\end{proof}

\begin{theorem}\label{prime_sep}
If $r$ is prime, then the orbits of finite subgroups of rank $r$ are separated by
$$\{v_i\mid i=1,\ldots,r-1\}\cup \{v_{ij}\mid 1\le i<j<r\}.$$
\end{theorem}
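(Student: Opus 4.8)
The plan is to argue directly: if rank-$r$ subgroups $E$ and $E'$ agree on every $v_i$ and every $v_{ij}$, then they lie in a common $\GL_r(\F_p)\times\F^\times$-orbit. Since $\F$ is algebraically closed I would first rescale so that $d_{r,r}(E)=d_{r,r}(E')=1$; the residual symmetry is then the group $\mu_N$ of $N$-th roots of unity in $\F$, where $N:=p^r-1$, acting on $d_{i,r}$ with weight $m_i:=p^r-p^{r-i}=p^{r-i}(p^i-1)$. Because the Dickson invariants separate $\GL_r(\F_p)$-orbits, it suffices to produce $\zeta\in\mu_N$ with $\zeta^{m_i}d_{i,r}(E)=d_{i,r}(E')$ for all $i$. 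Write $x_i:=d_{i,r}(E)$ and $y_i:=d_{i,r}(E')$ in these normalised representatives and set $M:=(p^r-1)/(p-1)$. Since $r$ is prime, $\gcd(i,r)=1$ and hence $\gcd(p^i-1,p^r-1)=p-1$ for $1\le i\le r-1$, so $v_i=d_{i,r}^{M}d_{r,r}^{-m_i/(p-1)}$ and the hypothesis $v_i(E)=v_i(E')$ reads $x_i^M=y_i^M$; likewise $v_{ij}(E)=v_{ij}(E')$ reads $x_jx_i^{a_{ij}}=y_jy_i^{a_{ij}}$, where $a_{ij}\ge1$ is the exponent furnished by Lemma~\ref{vijlemma} (every $v_{ij}$ in the list is defined precisely because $r$ is prime).

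Next I would dispose of the degenerate case: if every $x_i=0$, then $x_i^M=y_i^M$ forces every $y_i=0$, so by Theorem~\ref{field-thm} both $E$ and $E'$ are dilations of $\F_{p^r}$ and $\zeta=1$ works. Otherwise let $i_0$ be the \emph{least} index with $x_{i_0}\ne0$; then $y_{i_0}\ne0$ and $y_{i_0}/x_{i_0}$ lies in the group $\mu_M$ of $M$-th roots of unity, which has order $M$ since $p\nmid M$. The crucial point is that $\zeta\mapsto\zeta^{m_{i_0}}$ carries $\mu_N$ onto $\mu_M$: because $(p-1)\mid m_{i_0}$ one has $(\zeta^{m_{i_0}})^M=\zeta^{m_{i_0}M}=1$ for all $\zeta\in\mu_N$, so the image lies in $\mu_M$, while its order is $N/\gcd(m_{i_0},N)$ and $\gcd(m_{i_0},N)=\gcd(p^{i_0}-1,p^r-1)=p^{\gcd(i_0,r)}-1=p-1$ (here primality of $r$ is used), so the image has order $M$ and equals $\mu_M$. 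Pick $\zeta_0\in\mu_N$ with $\zeta_0^{m_{i_0}}=y_{i_0}/x_{i_0}$. Replacing $E'$ by $\zeta_0^{-1}E'$ preserves the normalisation and changes none of the $\F^\times$-invariants $v_i,v_{ij}$, so afterwards we may assume $y_{i_0}=x_{i_0}\ne0$, and it remains only to check $y_i=x_i$ for every $i$.

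Finally I would run through the ranges of $i$. For $i<i_0$ minimality gives $x_i=0$, hence $y_i=0$ from $x_i^M=y_i^M$; for $i=i_0$ there is nothing to prove; and for $i>i_0$ either $x_i=0$, again forcing $y_i=0$, or $x_i\ne0$, in which case the relation coming from $v_{i_0 i}$ (which is in the list since $i_0<i<r$) reads $x_ix_{i_0}^{a_{i_0 i}}=y_iy_{i_0}^{a_{i_0 i}}=y_ix_{i_0}^{a_{i_0 i}}$, and cancelling the nonzero factor $x_{i_0}^{a_{i_0 i}}$ gives $x_i=y_i$. Thus the normalised Dickson tuples of $E$ and of the rescaled $E'$ coincide, whence $E\sim E'$. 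I expect the synchronisation in the middle paragraph to be the main obstacle: a single $v_i$ pins down $d_{i,r}$ only up to an $M$-th root of unity, and one must use the residual torus together with the mixed invariants $v_{i_0 i}$ to align all coordinates at once — with primality of $r$ entering in three essential ways: it makes every $v_{ij}$ defined, it collapses each $v_i$ to an exact $M$-th power, and it makes $\zeta\mapsto\zeta^{m_{i_0}}$ surject onto $\mu_M$. The only other point needing care, the possible vanishing of Dickson invariants, is kept under control by taking $i_0$ minimal, which forces all the mixed invariants we invoke to be the "upper-triangular" ones already present in the given list.
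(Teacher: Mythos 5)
Your proof is correct, but it takes a genuinely different route from the paper's. The paper first invokes Proposition~\ref{sep_prop} (the full set of $\F^\times$-invariant monomials in $\F[d_{1,r},\ldots,d_{r-1,r},d_{r,r}^{-1}]$ separates orbits) and then shows that the value of an arbitrary invariant monomial $f=d_{1,r}^{a_1}\cdots d_{r-1,r}^{a_{r-1}}d_{r,r}^{-a_r}$ at $E$ is already determined by the listed $v_i$ and $v_{ij}$: if some $d_{i,r}(E)=0$ this is detected by $v_i$, and otherwise $f$ is rewritten as a Laurent monomial in $v_1$ and the $v_{1j}$ (descending to $v_2, v_{2j}$, etc., as leading Dickson invariants vanish). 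You instead argue orbit-matching directly: normalise $d_{r,r}=1$, observe the residual symmetry is $\mu_{p^r-1}$, use $\gcd(p^i-1,p^r-1)=p-1$ (this is where primality enters for you, just as it enters the paper via Lemma~\ref{vijlemma}) to read $v_i(E)=v_i(E')$ as $x_i^M=y_i^M$, synchronise the first nonvanishing coordinate via surjectivity of $\zeta\mapsto\zeta^{m_{i_0}}$ onto $\mu_M$, and propagate with the mixed invariants $v_{i_0 i}$; the fact that equal Dickson tuples force equal subgroups (the subgroup is the root set of $F_E$) closes the argument. Your version is more self-contained — it does not need the linear reductivity of $\F^\times$ or the monoid of all invariant monomials — and it makes the role of primality completely explicit; the paper's version is less explicit about the arithmetic but follows a uniform template that it reuses verbatim for the rank $4$, $6$, $8$, $9$ and $10$ cases, where the gcd structure is more complicated and a single minimal index $i_0$ no longer controls everything. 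Both arguments are sound; the only point in yours worth flagging is that the cancellation step at the end silently uses that the relations coming from $v_{i_0 i}$ are preserved after rescaling $E'$, which is fine precisely because the $v_{ij}$ are $\F^\times$-invariant.
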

\begin{proof}
Consider an arbitrary non-negative non-zero solution  $(a_1,\ldots,a_r)$ to Equation~\ref{inv_eq} and define
$f:=d_{1,r}^{a_1}\cdots d_{r-1,r}^{a_{r-1}}d_{r,r}^{-a_r}$. Suppose $E$ is a rank $r$ subgroup. If $f(E)=0$,
then $d_{i,r}(E)=0$ for some $i<r$ and $f(E)$ is determined by $v_i(E)$. Thus we may assume 
$f(E)\not=0$. 

 Since $r$ is prime, it follows from Lemma~\ref{vijlemma}, that the invariants $v_{ij}$ exist. Thus we can
write
$$f =v_{12}^{a_2}v_{13}^{a_3}\cdots v_{1(r-1)}^{a_{r-1}}d_{1,r}^ad_{r,r}^{-b}$$
for some, not necessarily positive, integers $a$ and $b$. 
Since
$f/(v_{12}^{a_2}v_{13}^{a_3}\cdots v_{1(r-1)}^{a_{r-1}})$ is invariant,
$d_{1,r}^ad_{r,r}^{-b}=v_1^c$ for some integer $c$. 
If $d_{1,r}(E)\not=0$, then $v_1^c(E)$ is well-defined even if $c$ is negative and  
$f(E)$ is determined by $v_1(E)$ and $v_{1j}(E)$ for $j>1$.

Suppose $d_{1,r}(E)=0$. Then, since $f(E)\not=0$,  we have $a_1=0$. Hence
$$f=v_{23}^{a_3}v_{24}^{a_4}\cdots v_{2(r-1)}^{a_{r-1}}v_2^a$$
for some,  not necessarily positive, integer $a$. If $d_{2,r}(E)\not=0$, then $v_2^a(E)$ is well-defined and $f(E)$ is determined by
$v_2(E)$ and $v_{2j}(E)$ for $j>2$.

Continuing in this fashion, the problem is reduced to $d_{i,r}(E)=0$ for $i<r-1$ and $f(E)\not=0$.
Thus $f$ has support $\{d_{r,r-1},d_{r,r}^{-1}\}$ and is a positive power of $v_{r-1}$.
\end{proof}

\begin{theorem}\label{rk4_sep} The orbits of finite subgroups of rank $4$ are separated by
$\{v_1,v_2,v_3,v_{12}, v_{13},v_{32}\}$.
\end{theorem}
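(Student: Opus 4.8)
The plan is to mimic the argument used in the proof of Theorem~\ref{prime_sep}, handling the obstruction that, because $4$ is not prime, the invariant $v_{ij}$ need not exist for every pair $i<j<4$. First I would record which $v_{ij}$ are guaranteed by Lemma~\ref{vijlemma}: since $\gcd(1,4)=1$ divides everything, $v_{12}$ and $v_{13}$ exist; since $\gcd(3,4)=1$, $v_{32}$ exists; the problematic case is $\gcd(2,4)=2$, which does not divide $3$, so there is no invariant of the form $d_{3,4}d_{2,4}^ad_{4,4}^{-b}$ and in particular $v_{23}$ is unavailable. The list $\{v_1,v_2,v_3,v_{12},v_{13},v_{32}\}$ is chosen precisely so as to avoid needing $v_{23}$.

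Next I would take an arbitrary non-negative non-zero solution $(a_1,a_2,a_3,a_4)$ to Equation~\ref{inv_eq} with $r=4$, set $f:=d_{1,4}^{a_1}d_{2,4}^{a_2}d_{3,4}^{a_3}d_{4,4}^{-a_4}$, and fix a rank $4$ subgroup $E$. As in Theorem~\ref{prime_sep}, if $f(E)=0$ then some $d_{i,4}(E)=0$ with $i<4$ and $f(E)$ is forced to be $0$, which is detected by $v_i(E)$; so I may assume $f(E)\neq 0$, hence $d_{1,4}(E),d_{2,4}(E),d_{3,4}(E)$ are all nonzero whenever the corresponding exponent is positive. Now I would case on the support of $f$. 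If $a_1>0$, write $f = v_{12}^{a_2}v_{13}^{a_3}\,d_{1,4}^{a}d_{4,4}^{-b}$ for suitable integers $a,b$; dividing $f$ by the invariant $v_{12}^{a_2}v_{13}^{a_3}$ leaves an invariant monomial supported on $\{d_{1,4},d_{4,4}^{-1}\}$, which must be a power of $v_1$, so $f(E)$ is determined by $v_1(E),v_{12}(E),v_{13}(E)$. If $a_1=0$ but $a_3>0$, write $f=v_{32}^{a_2}\,d_{3,4}^{a}d_{4,4}^{-b}$; dividing by $v_{32}^{a_2}$ leaves an invariant supported on $\{d_{3,4},d_{4,4}^{-1}\}$, forced to be a power of $v_3$, so $f(E)$ is determined by $v_3(E),v_{32}(E)$. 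The only remaining case is $a_1=a_3=0$, so $f$ is supported on $\{d_{2,4},d_{4,4}^{-1}\}$ and is a positive power of $v_2$.

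In every case $f(E)$ is determined by the values on $E$ of invariants from the claimed list; since these invariants span all of $\F[d_{1,4},d_{2,4},d_{3,4},d_{4,4}^{-1}]^{\F^\times}$ up to monomials of this shape, and by Proposition~\ref{sep_prop} that ring separates the orbits with $d_{4,4}t=1$, the set $\{v_1,v_2,v_3,v_{12},v_{13},v_{32}\}$ separates the orbits of rank $4$ subgroups. The one point needing genuine care, and the main obstacle, is the decomposition step: I must check that when $a_1>0$ the exponents $a_2,a_3$ of $v_{12},v_{13}$ can indeed be factored out so that the residual monomial $d_{1,4}^{a}d_{4,4}^{-b}$ is itself invariant (equivalently, that $(a_1,a_2,a_3,a_4)$ minus $a_2$ times the exponent vector of $v_{12}$ minus $a_3$ times that of $v_{13}$ has zero second and third coordinates, which is immediate from how $v_{12},v_{13}$ are built) and similarly in the $a_1=0<a_3$ case; this is the bookkeeping that the avoidance of $v_{23}$ is designed to make work, and it is exactly parallel to the reduction in Theorem~\ref{prime_sep}.
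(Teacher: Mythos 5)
Your proposal is correct and follows essentially the same route as the paper: the paper's proof is a terse reference to the argument of Theorem~\ref{prime_sep} with the elimination order $1,3,2$, using $\{v_1,v_{12},v_{13}\}$ when $d_{1,4}(E)\neq 0$, then $\{v_3,v_{32}\}$, then $v_2$, which is exactly the case analysis you carry out. Your observation that $v_{23}$ is unavailable because $\gcd(2,4)=2$ does not divide $3$, forcing the use of $v_{32}$ and hence the elimination of $d_{3,4}$ before $d_{2,4}$, is precisely the point the paper's chosen list is designed around.
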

\begin{proof} The proof is similar to the proof of Theorem~\ref{prime_sep}. Using Lemma~\ref{vijlemma},
the invariants $v_{12}$, $v_{13}$ and $v_{32}$ exist. The set $\{v_1, v_{12},v_{13}\}$ separates orbits with
$d_{1,4}(E)\not=0$, the set $\{v_3,v_{32}\}$ separates orbits with $d_{1,4}(E)=0$, $d_{3,4}(E)\not =0$, and 
$v_2$ separates orbits with $d_{1,4}(E)=0$, $d_{3,4}(E)=0$ and $d_{2,4}(E)\not=0$.
\end{proof}

\begin{lemma}\label{u2jlemma}
Suppose the greatest common divisor of $r$ and $i$ is $2$ and $j$ is odd.
If  $d_{j,r}^{a_j}d_{i,r}^{a_i}d_{r,r}^{-a_r}$ is a primitive invariant, then
$p+1$ divides $a_j$. Furthermore, there exists a primitive invariant
$$u_{ij}:=d_{j,r}^{p+1}d_{i,r}^ad_{r,r}^{-b}.$$
\end{lemma}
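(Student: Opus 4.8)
The plan is to analyse, exactly as in the proof of Lemma~\ref{vijlemma}, the equation governing invariants with support $\{d_{i,r},d_{j,r},d_{r,r}^{-1}\}$,
\begin{equation*}
a_j\,p^{r-j}(p^j-1)+a_i\,p^{r-i}(p^i-1)=a_r(p^r-1),
\end{equation*}
first reducing it modulo $p^2-1$ to obtain the divisibility $p+1\mid a_j$, and then constructing $u_{ij}$ by a B\'ezout argument and checking primitivity.

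First I would reduce the displayed equation modulo $p^2-1$. Since $\gcd(r,i)=2$ both $r$ and $i$ are even, so $p^2-1$ divides $p^r-1$ and $p^i-1$; hence the right-hand side and the second term on the left vanish modulo $p^2-1$, forcing $p^2-1\mid a_j\,p^{r-j}(p^j-1)$. Writing $p^j-1=(p-1)(1+p+\cdots+p^{j-1})$ and $p^2-1=(p-1)(p+1)$ and cancelling $p-1$ gives $p+1\mid a_j\,p^{r-j}(1+p+\cdots+p^{j-1})$. Reducing the cofactor modulo $p+1$, where $p\equiv-1$, we get $p^{r-j}\equiv\pm1$ and, because $j$ is odd, $1+p+\cdots+p^{j-1}\equiv1$; so the cofactor is coprime to $p+1$ and therefore $p+1\mid a_j$. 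I would stress that this congruence argument uses no primitivity, so it shows $p+1\mid a_j$ for \emph{every} non-negative solution of the displayed equation --- a fact needed below.

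For the second assertion I would imitate Lemma~\ref{vijlemma}. Since $\gcd(i,r)=2$, that proof provides positive integers $\bar{a},\bar{b}$ with $p^2-1=\bar{b}(p^r-1)-\bar{a}\,p^{r-i}(p^i-1)$. Set $c:=p^{r-j}(p^j-1)/(p-1)$, a positive integer, and note $(p^2-1)c=(p+1)\,p^{r-j}(p^j-1)$; multiplying the B\'ezout identity by $c$ and rearranging yields
\begin{equation*}
(p+1)\,p^{r-j}(p^j-1)+c\bar{a}\,p^{r-i}(p^i-1)=c\bar{b}\,(p^r-1),
\end{equation*}
so $(p+1,c\bar{a},c\bar{b})$ is a non-negative solution with $a_j=p+1$ and $a_i=c\bar{a}>0$. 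Now choose $a$ minimal positive among the $a_i$-coordinates of non-negative solutions with $a_j=p+1$; the corresponding $a_r=b$ is then determined, and I claim $u_{ij}:=d_{j,r}^{p+1}d_{i,r}^{a}d_{r,r}^{-b}$ is primitive. If its exponent sequence were a sum of two non-zero non-negative solutions, their $d_{j,r}$-exponents would be non-negative, would sum to $p+1$, and each would be a multiple of $p+1$ by the fact above; hence one is $0$ and the other $p+1$. The summand with $d_{j,r}$-exponent $0$ is a non-zero solution, so its $d_{i,r}$-exponent is strictly positive, and subtracting it shows the other summand is a non-negative solution with $a_j=p+1$ whose $a_i$-coordinate is strictly smaller than $a$ --- contradicting minimality.

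The only delicate point is the greatest-common-divisor bookkeeping: the step isolating the factor $p+1$ rests on $\gcd(j,2)=1$, i.e. on the hypothesis that $j$ is odd, since that is what makes $1+p+\cdots+p^{j-1}\equiv1\pmod{p+1}$. Everything else is routine.
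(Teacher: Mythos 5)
Your argument is correct and follows essentially the same route as the paper: reduce the support equation modulo $p^2-1$, use that $p$ is a unit and $1+p+\cdots+p^{j-1}\equiv 1\pmod{p+1}$ for odd $j$ to conclude $p+1\mid a_j$, and then obtain $u_{ij}$ by multiplying the B\'ezout identity for $\gcd(p^r-1,p^{r-i}(p^i-1))=p^2-1$ by $p^{r-j}(p^j-1)/(p-1)$ and minimising. Your primitivity check is in fact more detailed than the paper's one-line ``choose the minimal positive $a$ and $b$''; the only loose end is the subcase in which the summand with $d_{j,r}$-exponent $p+1$ has $d_{i,r}$-exponent $0$, which minimality over \emph{positive} $a_i$-values does not by itself exclude --- but such a summand would be a positive multiple of $v_j$, whose $d_{j,r}$-exponent $(p^r-1)/(p^{\gcd(j,r)}-1)\geq p^{r/2}+1$ exceeds $p+1$ since $\gcd(j,r)$ is an odd divisor of the even number $r$, so this case cannot occur.
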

\begin{proof}
To find invariants with support $\{d_{i,r},d_{j,r},d_{r,r}^{-1}\}$, we consider the equation
$$a_jp^{r-j}(p^j-1)=a_r(p^r-1)-a_ip^{r-i}(p^i-1).$$
Since $r$ and $i$ are even, $p^2-1$ divides both $p^r-1$ and $p^i-1$. Hence $p^2-1$ divides
$a_jp^{r-j}(p^j-1)$. 
We will show that $p+1$ divides $a_j$. Since $j$ is odd, dividing $p^j-1$ by $p+1$ gives remainder $p-1$.
Therefore $\gcd(p^j-1,p+1)=\gcd(p+1,p-1)$. Hence $\gcd((p^j-1)/(p-1),p+1)=1$.
Thus $p+1$ divides $a_j$.

Since $2$ is the greatest common divisor of $r$ and $i$,
there exist positive integers
$\bar{a}$ and $\bar{b}$ with $p^2-1=\bar{b}(p^r-1)-\bar{a}p^{r-i}(p^i-1)$. 
Note that $c:=p^{r-j}(p^j-1)/(p-1)$ is a positive integer. Define 
$a:=c\bar{a}$ and $b:=c\bar{b}$. Then $(p+1)p^{r-j}(p^j-1)=b(p^r-1)-ap^{r-i}(p^i-1)$. 
Hence $a_j=p+1$, $a_i=a$ and $a_r=b$
is a solution to the above equation. Choose the minimal positive $a$ and $b$ to get the primitive solution.
\end{proof}

\begin{theorem}
The orbits of finite subgroups of rank $6$ are separated by
$\{v_1,v_2,v_3,v_4,v_5,v_{12},v_{13},v_{14},v_{15},v_{52},v_{53},v_{54},v_{24},u_{23},u_{43}\}$.
\end{theorem}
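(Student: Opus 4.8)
The plan is to imitate the proofs of Theorem~\ref{prime_sep} and Theorem~\ref{rk4_sep}. Let $(a_1,\dots,a_6)$ be an arbitrary non-negative non-zero solution of Equation~\ref{inv_eq} for $r=6$ and let $f:=d_{1,6}^{a_1}\cdots d_{5,6}^{a_5}d_{6,6}^{-a_6}$; the goal is to show that for every rank $6$ subgroup $E$ the value $f(E)$ is a function of the values at $E$ of the fifteen listed invariants. Since $d_{6,6}(E)\neq 0$, if $f(E)=0$ then $d_{i,6}(E)=0$ for some $i<6$ and $f(E)=0$ is determined by $v_i(E)=0$; so I may assume $f(E)\neq 0$, which forces $a_i=0$ whenever $d_{i,6}(E)=0$. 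Recall that each $v_i$ for $1\le i\le 5$ exists as the primitive invariant with support $\{d_{i,6},d_{6,6}^{-1}\}$, and that Lemma~\ref{vijlemma} provides $v_{1j}$ for $j\in\{2,3,4,5\}$ (as $\gcd(6,1)=1$), $v_{5j}$ for $j\in\{2,3,4\}$ (as $\gcd(6,5)=1$), and $v_{24}$ (as $\gcd(6,4)=2$ divides $2$), while Lemma~\ref{u2jlemma} provides $u_{23}$ and $u_{43}$ (as $\gcd(6,2)=\gcd(6,4)=2$ and $3$ is odd). This accounts for the existence of all fifteen invariants in the list.

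The argument is a descending case analysis on which Dickson invariants vanish at $E$. If $d_{1,6}(E)\neq 0$, then $f/(v_{12}^{a_2}v_{13}^{a_3}v_{14}^{a_4}v_{15}^{a_5})$ is a weight-zero Laurent monomial with support in $\{d_{1,6},d_{6,6}^{-1}\}$, hence a power of $v_1$, so $f(E)$ is determined by $v_1(E),v_{12}(E),\dots,v_{15}(E)$. Otherwise $a_1=0$; if then $d_{5,6}(E)\neq 0$, the same device with $v_{52},v_{53},v_{54}$ reduces $f$ to a power of $v_5$, so $f(E)$ is determined by $v_5(E),v_{52}(E),v_{53}(E),v_{54}(E)$. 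If $d_{1,6}(E)=d_{5,6}(E)=0$, then $a_1=a_5=0$ and $f=d_{2,6}^{a_2}d_{3,6}^{a_3}d_{4,6}^{a_4}d_{6,6}^{-a_6}$. The crucial point here is that $p^2-1$ divides both $p^6-1$ and $p^4-1$, while $\gcd\!\big(p^2-1,\,p^3(p^3-1)\big)=p-1$; applying this to the weight-zero relation $a_2p^4(p^2-1)+a_3p^3(p^3-1)+a_4p^2(p^4-1)=a_6(p^6-1)$ gives $p+1\mid a_3$, say $a_3=(p+1)k$.

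Finally I split on $d_{4,6}(E)$. If $d_{4,6}(E)\neq 0$, then $f/(u_{43}^{k}v_{24}^{a_2})$ has support in $\{d_{4,6},d_{6,6}^{-1}\}$, hence is a power of $v_4$, so $f(E)$ is determined by $u_{43}(E),v_{24}(E),v_4(E)$. If $d_{4,6}(E)=0$, then $a_4=0$ and $f=d_{2,6}^{a_2}d_{3,6}^{(p+1)k}d_{6,6}^{-a_6}$; now $f/u_{23}^{k}$ has support in $\{d_{2,6},d_{6,6}^{-1}\}$ and is a power of $v_2$, so $f(E)$ is determined by $u_{23}(E),v_2(E)$ if $d_{2,6}(E)\neq 0$, while if $d_{2,6}(E)=0$ then $a_2=0$ and $f$ is a power of $v_3$. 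In every case $f(E)$ is determined by the listed invariants, and the invariants actually invoked are exactly $\{v_1,\dots,v_5,v_{12},v_{13},v_{14},v_{15},v_{52},v_{53},v_{54},v_{24},u_{23},u_{43}\}$. I expect the only genuine obstacle to be organising the case tree so that the $d_{3,6}$-factor of $f$ is always removed by one of $u_{23},u_{43}$ — which rests precisely on the divisibility $p+1\mid a_3$ being available exactly when $d_{1,6}(E)=d_{5,6}(E)=0$ — together with the bookkeeping that the fifteen listed invariants suffice; the remainder is the routine "divide out and shrink the support" computation already carried out for prime rank and for rank $4$.
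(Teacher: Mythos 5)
Your overall strategy---existence of the listed invariants via Lemmas~\ref{vijlemma} and \ref{u2jlemma}, followed by a descending case analysis that strips factors off an arbitrary invariant monomial $f$---is exactly the paper's, and your observation that $p+1\mid a_3$ once $a_1=a_5=0$ is correct and is precisely the content of Lemma~\ref{u2jlemma}. However, your final case split is taken in the wrong order and one step fails. In the sub-case $d_{1,6}(E)=d_{5,6}(E)=0$ and $d_{4,6}(E)\neq 0$ you may still have $d_{2,6}(E)\neq 0$ and $a_2>0$, and you assert that $f/(u_{43}^{k}v_{24}^{a_2})$ has support in $\{d_{4,6},d_{6,6}^{-1}\}$. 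By the convention of Lemma~\ref{vijlemma}, $v_{24}=d_{4,6}\,d_{2,6}^{a}\,d_{6,6}^{-b}$ carries $d_{4,6}$ to the \emph{first} power and $d_{2,6}$ to the large power $a$, so dividing by $v_{24}^{a_2}$ leaves $d_{2,6}$ with exponent $a_2(1-a)\neq 0$: the quotient is not a power of $v_4$. The invariant your ordering actually requires, $v_{42}=d_{2,6}\,d_{4,6}^{a}\,d_{6,6}^{-b}$, is not in the listed separating set, so the theorem as stated is not established by your argument.

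The paper eliminates in the order $1,5,2,4,3$ rather than your $1,5,4,2,3$: when $d_{2,6}(E)\neq 0$ one divides $f$ by $v_{24}^{a_4}$ (removing $d_{4,6}^{a_4}$ exactly, since $d_{4,6}$ occurs to the first power in $v_{24}$) and by $u_{23}^{a_3/(p+1)}$, leaving a power of $v_2$; only when $d_{2,6}(E)=0$, hence $a_2=0$, does one pass to $d_{4,6}(E)\neq 0$ and use $u_{43}$ to reduce to a power of $v_4$. With that single reordering your argument goes through verbatim. (Your order can in fact be rescued by checking that $v_2$ and $v_{24}$ generate the full lattice of weight-zero exponent vectors supported on $\{d_{2,6},d_{4,6},d_{6,6}^{-1}\}$---both relevant indices equal $p^4+p^2+1$---so the quotient $f/u_{43}^{k}$ is a Laurent monomial in $v_2$ and $v_{24}$; but this is an extra computation the paper's ordering avoids.)
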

\begin{proof} The proof is similar to the proof of Theorem~\ref{prime_sep}. Using Lemmas~\ref{vijlemma} and \ref{u2jlemma}, 
the listed invariants exist. The set $\{v_1,v_{12},v_{13},v_{14},v_{15}\}$ separates orbits with $d_{1,6}(E)\not=0$.
The set $\{v_5,v_{52},v_{53},v_{54}\}$ separates orbits with $d_{1,6}(E)=0$ and $d_{5,6}(E)\not=0$.
The set $\{v_2,v_{24},u_{23}\}$ separates orbits with $d_{1,6}(E)=0$, $d_{5,6}(E)=0$ and $d_{2,6}(E)\not=0$.
The set $\{v_4,u_{43}\}$ separates orbits with $d_{1,6}(E)=0$, $d_{5,6}(E)=0$, $d_{2,6}(E)=0$ and $d_{4,6}(E)\not=0$.
Finally, $v_3$ separates orbits with $d_{1,6}(E)=0$, $d_{5,6}(E)=0$, $d_{2,6}(E)=0$, $d_{4,6}(E)=0$ and $d_{3,6}(E)\not=0$
\end{proof}

\begin{theorem}
The orbits of finite subgroups of rank $8$ are separated by
$$\{v_i\mid 1\le i< 8\} \cup\{v_{ij}\mid i\, {\rm odd}\, , i<j\}\cup \{v_{24},v_{26},v_{64}\}.$$
\end{theorem}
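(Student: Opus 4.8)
The plan is to follow the cascade argument of the proof of Theorem~\ref{prime_sep}, with one new ingredient: the even‑index part of the invariant ring is governed by the rank‑$4$ theory. First I would record that all the invariants in the statement exist: the $v_i$ are defined for $i=1,\dots,7$; each $v_{ij}$ with $i$ odd exists by Lemma~\ref{vijlemma}, since $\gcd(i,8)=1$ divides every $j$; and $v_{24},v_{26},v_{64}$ exist by Lemma~\ref{vijlemma} because $\gcd(2,8)=\gcd(6,8)=2$ divides both $4$ and $6$. Then fix a monomial invariant $f=d_{1,8}^{a_1}\cdots d_{7,8}^{a_7}d_{8,8}^{-a_8}$ satisfying Equation~\ref{inv_eq} and a rank‑$8$ subgroup $E$. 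If $d_{i,8}(E)=0$ for some $i$ with $a_i>0$, then $f(E)=0$ and its value is forced by $v_i(E)$; so we may assume $d_{i,8}(E)\neq 0$ whenever $a_i>0$.

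Next I would run a case analysis on the vanishing locus of the odd Dickson invariants, processing $d_{1,8}$, then $d_{3,8}$, then $d_{5,8}$, then $d_{7,8}$. In the case $d_{1,8}(E)\neq 0$ I would peel off each $d_{j,8}^{a_j}$ ($2\le j\le 7$) using $v_{1j}$ (which has $d_{j,8}$‑exponent $1$), leaving a monomial supported on $\{d_{1,8},d_{8,8}^{-1}\}$, necessarily a power of $v_1$, so $f(E)$ is determined. More generally, when $d_{2k-1,8}(E)\neq 0$ and the earlier odd Dickson invariants vanish on $E$, one clears each remaining $d_{j,8}^{a_j}$ against $d_{2k-1,8}$ using a listed invariant of support $\{d_{2k-1,8},d_{j,8},d_{8,8}^{-1}\}$ with $d_{j,8}$‑exponent $1$ (available since $2k-1$ is coprime to $8$), reducing to a power of $v_{2k-1}$. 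This disposes of every orbit on which some odd Dickson invariant is nonzero.

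In the remaining case $d_{1,8}(E)=d_{3,8}(E)=d_{5,8}(E)=d_{7,8}(E)=0$, Theorem~\ref{field-thm} says $E$ is a vector space over $\F_{p^2}$, and $f$ is supported on $\{d_{2,8},d_{4,8},d_{6,8},d_{8,8}^{-1}\}$. Substituting $a_1=a_3=a_5=a_7=0$ into Equation~\ref{inv_eq} and cancelling $p^2-1$ turns it into Equation~\ref{inv_eq} for rank $4$ with $p$ replaced by $P:=p^2$; moreover, evaluation of the $\F_{p^2}$‑Dickson polynomial as in the proof of Theorem~\ref{field-thm} identifies $d_{2k,8}(E)$ with the rank‑$4$ Dickson invariant $d^{(p^2)}_{k,4}(E)$, and the $\F^\times$‑weights match on both sides. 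Under this identification the set $\{v_2,v_4,v_6,v_{24},v_{26},v_{64}\}$ is exactly the image of the rank‑$4$ separating set $\{v_1,v_2,v_3,v_{12},v_{13},v_{32}\}$ of Theorem~\ref{rk4_sep}, so it separates the orbits of the even‑index reduction. Combining the two stages shows $f(E)$ is determined by the listed invariants, hence they separate the orbits.

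I expect the main obstacle to be the book‑keeping in the case $d_{1,8}(E)=0$ with some odd Dickson invariant nonzero: there one must also clear $d_{2,8}$, and since $\gcd(2,8)=2$ there is no invariant $v_{2j}$ with $j$ odd, so $d_{2,8}$ can only be cleared against the odd variables through the odd‑flexible invariants $v_{3j},v_{5j},v_{7j}$, which forces a careful choice of the order in which $d_{1,8},d_{3,8},d_{5,8},d_{7,8}$ are processed. In rank $6$ the analogous obstruction had to be resolved by introducing the $u$‑invariants $u_{23},u_{43}$ of Lemma~\ref{u2jlemma}; the crux here is to show that for rank $8$ the listed invariants already suffice, which reduces to a number‑theoretic analysis of the primitive non‑negative solutions of Equation~\ref{inv_eq} (using, among other things, that $p+1$ divides $(p^8-1)/(p-1)$). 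Verifying this, together with the precise matching of the even‑index invariants against Theorem~\ref{rk4_sep}, is where I anticipate most of the work lies, and where the argument should be checked against the pattern of the rank‑$6$ proof.
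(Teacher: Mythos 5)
Your overall architecture is right --- existence of the invariants via Lemma~\ref{vijlemma} followed by a cascade on the odd indices as in Theorem~\ref{prime_sep} --- and your treatment of the all-even stratum (identifying the sub-equation in $a_2,a_4,a_6,a_8$ with Equation~\ref{inv_eq} for rank $4$ over $\F_{p^2}$ and importing Theorem~\ref{rk4_sep}) is correct, though the paper simply continues the cascade through $2,6,4$ using $v_2,v_{24},v_{26}$, then $v_6,v_{64}$, then $v_4$, which is the same computation without any appeal to Theorem~\ref{field-thm}. The genuine gap is that you defer exactly the step that constitutes the proof: at the stage where $d_{1,8}(E)=0$ and $d_{3,8}(E)\neq 0$ (and likewise for anchors $5$ and $7$) you must clear $d_{2,8}^{a_2}$ and later $d_{4,8},d_{6,8}$ against the odd anchor, and you leave this as ``a number-theoretic analysis of the primitive solutions,'' possibly requiring $u$-invariants as in rank $6$. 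No such analysis is needed. The paper's elimination order is $1,3,5,7,2,6,4$; at each odd stage the anchor $i\in\{3,5,7\}$ satisfies $\gcd(i,8)=1$, so Lemma~\ref{vijlemma} supplies $v_{i2},v_{i4},v_{i6}$ (as well as $v_{ij}$ for the surviving odd $j$), each with $d_{j,8}$-exponent exactly $1$; dividing $f$ by their $a_j$-th powers leaves a Laurent monomial supported on $\{d_{i,8},d_{8,8}^{-1}\}$, i.e.\ an integer power of $v_i$. The $u$-invariants of Lemma~\ref{u2jlemma} arise only when an even anchor must clear an odd variable, which never happens in this order because all odd exponents are already zero by the time an even index becomes the anchor.

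There is, however, a real point buried in your worry that you should make explicit rather than wave at: the invariants $v_{32},v_{52},v_{54},v_{72},v_{74},v_{76}$ needed at the odd stages have second index smaller than the first, so they are not literally in the displayed set $\{v_{ij}\mid i\ \mathrm{odd},\ i<j\}$; compare the rank-$4$ and rank-$6$ theorems, whose lists do include $v_{32}$ and $v_{52},v_{53},v_{54}$ respectively. The condition should be read as $j\neq i$ rather than $i<j$. This is not cosmetic: on the stratum where only $d_{2,8},d_{3,8},d_{8,8}$ are nonzero, every listed invariant except $v_2$ and $v_3$ vanishes identically, and the exponent lattice generated by $v_2$ and $v_3$ has index $(p^2+1)(p^4+1)$ in the lattice of all weight-zero Laurent monomials on that support, so the value of $v_{32}$ is not determined by $v_2$ and $v_3$ and the literal set fails to separate. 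Once the set is read as including these invariants, your cascade closes with no further work and no new primitive-solution analysis.
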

\begin{proof} The proof is similar to the proof of Theorem~\ref{prime_sep}. Using Lemma~\ref{vijlemma}, 
the listed invariants exist. 
We then ``eliminate'' variables in the following order: $1,3,5,7,2,6,4$. 
\end{proof}

\begin{theorem}
The orbits of finite subgroups of rank $9$ are separated by
$$\{v_i\mid 1\le i< 9\} \cup\{v_{ij}\mid i\not\in\{3,6\}\, , i<j\}\cup \{v_{36}\}.$$
\end{theorem}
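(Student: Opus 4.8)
The plan is to follow the template of the proof of Theorem~\ref{prime_sep}. First I would check that all of the listed invariants are well-defined: for $i\notin\{3,6\}$ one has $\gcd(9,i)=1$, so Lemma~\ref{vijlemma} produces a primitive invariant $v_{ij}$ for every $j>i$, and since $\gcd(9,3)=3$ divides $6$, Lemma~\ref{vijlemma} also produces $v_{36}$. Note that $9$ is odd, so $\gcd(9,i)$ is never $2$ and Lemma~\ref{u2jlemma} never intervenes; in particular no $u$-type invariants are required, in contrast with the even-rank cases treated above.

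Next, mimicking Theorem~\ref{prime_sep}, I would take an arbitrary non-zero non-negative solution $(a_1,\ldots,a_9)$ of Equation~\ref{inv_eq}, set $f:=d_{1,9}^{a_1}\cdots d_{8,9}^{a_8}d_{9,9}^{-a_9}$, and fix a rank-$9$ subgroup $E$. If $f(E)=0$ then $d_{i,9}(E)=0$ for some $i<9$ occurring in $f$, and $v_i(E)=0$ already detects this; so I may assume $d_{i,9}(E)\neq0$ for every $i$ in the support of $f$. The goal is then to rewrite $f$, one Dickson variable at a time, as a Laurent monomial in the listed invariants in which every negative exponent falls on a $v_j$ with $d_{j,9}(E)\neq0$. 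Once that is done, $f(E)$ is determined by the values at $E$ of the listed invariants, and so, by Proposition~\ref{sep_prop}, the listed set separates the orbits.

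The elimination would be carried out by a case analysis on which $d_{i,9}(E)$ vanish, processing the indices in the order $1,2,4,5,7,8,3,6$. For $i\in\{1,2,4,5,7\}$ the invariants $v_{ij}$ with $i<j$ all lie in the list; so whenever $i$ is the smallest surviving index, one keeps $i$ and absorbs every larger surviving index $j$ of the support into powers of $d_{i,9}$ (and $d_{9,9}$) using $v_{ij}$, leaving a monomial in $d_{i,9}$ and $d_{9,9}^{-1}$ which is a power of $v_i$. The two indices $3$ and $6$, for which $\gcd(9,\cdot)=3$, can only be linked to one another, via $v_{36}$, or be picked up earlier by $1$ or $2$ through $v_{13},v_{23},v_{16},v_{26}$ (and, in the case of $6$, also by $4$ or $5$ through $v_{46},v_{56}$); so these two are handled last, once the support has been pared down so that $v_3,v_6$ and $v_{36}$ suffice.

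The main obstacle is precisely this last point, the bookkeeping around the indices $3$ and $6$. The invariants $v_{43},v_{53},v_{73},v_{83}$ (and $v_{63}$) exist by Lemma~\ref{vijlemma} but are deliberately omitted from the list, so $d_{3,9}$ can never be ``absorbed upward'' into a larger index; one must therefore verify that in every case with $d_{3,9}(E)\neq0$ — and likewise $d_{6,9}(E)\neq0$ — the reduction can nonetheless be completed using only $v_3,v_6,v_{36}$ and $v_{13},v_{23},v_{16},v_{26},v_{46},v_{56}$, together with the invariants that remove the coprime-index Dickson variables. Concretely this means showing that, after inverting $d_{9,9}$, the monoid of solutions of Equation~\ref{inv_eq} supported on index sets meeting $\{3,6\}$ is generated by the listed invariants; establishing this requires using in full the divisibility restrictions that $\gcd(9,i)=3$ places on the exponents, exactly the mechanism underlying Lemmas~\ref{vijlemma} and \ref{u2jlemma}.
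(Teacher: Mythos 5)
Your outline is faithful to the paper's own (two-line) proof: the paper likewise says only that the listed invariants exist by Lemma~\ref{vijlemma} and that one eliminates variables in the order $1,2,4,5,7,8,3,6$, and your existence check (that $\gcd(9,i)=1$ for $i\notin\{3,6\}$, that $3\mid 6$ gives $v_{36}$, and that Lemma~\ref{u2jlemma} plays no role since $9$ is odd) is exactly right. So you have reproduced the intended strategy.

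However, the step you yourself flag as ``the main obstacle'' is precisely the step that is not routine, and neither your outline nor the paper's proof actually carries it out. Concretely, take $E$ with $d_{1,9}(E)=d_{2,9}(E)=0$ but $d_{3,9}(E)\neq 0$ and $d_{4,9}(E)\neq 0$, and a monomial invariant $f$ with $f(E)\neq 0$, so $a_1=a_2=0$. Absorbing the indices $5,\ldots,8$ with $v_{45},\ldots,v_{48}$ leaves an invariant Laurent monomial $g=d_{3,9}^{a_3}d_{4,9}^{a_4'}d_{9,9}^{-a_9'}$. The divisibility forced by $\gcd(3,9)=3$ shows that the lattice of such monomials is generated by $v_3$ together with a monomial $w$ whose $d_{4,9}$-exponent is $p^2+p+1$, whereas the $d_{4,9}$-exponent of $v_4$ is $(p^9-1)/(p-1)=(p^2+p+1)(p^6+p^3+1)$. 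Because $v_{43}$ is excluded by the condition $i<j$, the only listed invariants supported on $\{3,4\}$ are $v_3$ and $v_4$, and these determine $w(E)$ only up to a $(p^6+p^3+1)$-th root of unity (one only gets $w^{p^6+p^3+1}=v_4v_3^{c}$). Since non-negative solutions of this shape do exist (e.g.\ $w\cdot v_3^{N}$ for large $N$), the reduction does not close as described: one must either supply additional invariants (such as $v_{36}/v_{46}$ when $d_{6,9}(E)\neq 0$), or prove that no two distinct orbits with this vanishing pattern can agree on $v_3$ and $v_4$ while differing on $w$. The same issue recurs at pivots $5$, $7$, $8$ and for the index $6$. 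Your appeal to ``the divisibility restrictions that $\gcd(9,i)=3$ places on the exponents'' gestures at the right mechanism, but that verification is the entire content of the theorem beyond Theorem~\ref{prime_sep}; as written your proposal is an outline with the decisive step deferred rather than a proof.
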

\begin{proof} The proof is similar to the proof of Theorem~\ref{prime_sep}. Using Lemma~\ref{vijlemma}, 
the listed invariants exist. 
We then ``eliminate'' variables in the following order: $1,2,4,5,7,8,3,6$. 
\end{proof}

\begin{theorem}
The orbits of finite subgroups of rank $10$ are separated by
$$\{v_i\mid 1\le i< 10\} \cup\{v_{ij}\mid i\in\{1,3,7,9\}\, , i<j\}\cup \{v_{ij},u_{i5}\mid i,j \, {\rm even},\, i<j\}.$$
\end{theorem}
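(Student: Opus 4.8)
The plan is to follow the template of the proof of Theorem~\ref{prime_sep}. First I would record that all the listed invariants exist: $v_1,\dots,v_9$ always exist; since $\gcd(10,i)=1$ for $i\in\{1,3,7,9\}$, Lemma~\ref{vijlemma} produces $v_{ij}$ for \emph{every} $j$; and since $\gcd(10,i)=2$ for $i\in\{2,4,6,8\}$, Lemma~\ref{vijlemma} produces $v_{ij}$ for every even $j$ while Lemma~\ref{u2jlemma}, because $5$ is odd, produces $u_{i5}$. Then, given an arbitrary non-negative non-zero solution $(a_1,\dots,a_{10})$ of Equation~\ref{inv_eq} and the corresponding monomial $f=d_{1,10}^{a_1}\cdots d_{9,10}^{a_9}d_{10,10}^{-a_{10}}$, I would argue as in Theorem~\ref{prime_sep}: for a rank $10$ subgroup $E$, if $f(E)=0$ then $d_{i,10}(E)=0$ for some $i<10$ with $a_i>0$, so $v_i(E)=0$ records this; hence it suffices to treat the case $f(E)\neq 0$, in which $a_i=0$ whenever $d_{i,10}(E)=0$.

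For the case $f(E)\neq 0$ I would eliminate the Dickson invariants in the order $1,3,7,9,2,4,6,8$, treating $5$ last. When we reach index $i$ we are on the stratum where $d_{j,10}(E)=0$ for all $j$ already eliminated and $d_{i,10}(E)\neq 0$; since $f(E)\neq 0$, all $a_j$ for those $j$ vanish, so the support of $f$ lies among $i$ and the not-yet-eliminated indices. For each not-yet-eliminated $j$ in the support we absorb the factor $d_{j,10}^{a_j}$ into a power of a listed mixed invariant with base $i$: if $\gcd(10,i)\mid j$ we use $v_{ij}^{a_j}$ (its $d_{j,10}$-exponent is $1$), and if $i$ is even and $j=5$ we use a power of $u_{i5}$. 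After all such absorptions the remaining factor has support $\{d_{i,10},d_{10,10}^{-1}\}$, hence is a power of $v_i$; all the invariants used are defined at $E$ because the relevant Dickson invariants are non-zero, so $f(E)$ is determined by the listed invariants. Once $1,3,7,9,2,4,6,8$ have all been eliminated the support of $f$ is contained in $\{d_{5,10},d_{10,10}^{-1}\}$, so $f$ is a power of $v_5$.

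The one point that needs care is the use of $u_{i5}$ in the even stages, and this is exactly why $1,3,7,9$ are eliminated first: once those four indices are gone, the only odd index that can appear in the support of $f$ is $5$. Every other support index, and $10$ itself, is even, so in the relation $\sum_j a_j(p^{10}-p^{10-j})=a_{10}(p^{10}-1)$ every term except possibly $a_5(p^{10}-p^{5})$ is divisible by $p^2-1$; therefore $p^2-1$ divides $a_5 p^{5}(p^{5}-1)$, hence $p+1$ divides $a_5(p^{5}-1)/(p-1)$, and since $5$ is odd one has $\gcd\bigl(p+1,(p^{5}-1)/(p-1)\bigr)=1$, so $p+1\mid a_5$. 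This divisibility is precisely what lets us write $d_{5,10}^{a_5}$ as a power of $u_{i5}$ times powers of $d_{i,10}$ and $d_{10,10}$.

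The \textbf{main obstacle} is the bookkeeping that guarantees, at every stage, that the mixed invariant needed to absorb a partner is among those listed and that, whenever a $u$-type invariant is invoked, the $(p+1)$-divisibility above holds. Both requirements are met by clearing the four indices coprime to $10$ before any even index: the invariants $v_{ij}$ with $i\in\{1,3,7,9\}$ pair such an index with \emph{every} other index, so those stages present no difficulty, and after them $5$ is the unique surviving odd index, which is exactly what the divisibility argument needs. Making this completely rigorous amounts to checking that for each pattern of vanishing Dickson invariants the corresponding sublist generates the full lattice of invariant exponent vectors; this combinatorial verification is the technical heart of the proof.
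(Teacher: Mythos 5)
Your proposal is correct and takes essentially the same route as the paper, whose entire proof is the argument you describe: the listed invariants exist by Lemmas~\ref{vijlemma} and~\ref{u2jlemma}, and one then runs the elimination template of Theorem~\ref{prime_sep} in the order $1,3,7,9,2,4,6,8,5$. The $(p+1)\mid a_5$ divisibility you check at the even stages is exactly the first assertion of Lemma~\ref{u2jlemma}, so your write-up simply makes explicit what the paper delegates to that lemma.
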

\begin{proof} The proof is similar to the proof of Theorem~\ref{prime_sep}. Using Lemmas~\ref{vijlemma} and \ref{u2jlemma}, 
the listed invariants exist. 
We then ``eliminate'' variables in the following order: $1,3,7,9,2,4,6,8,5$. 
\end{proof}

  \section{Groups of Rank 3}\label{rnk3_sec}

In this section we consider finite subgroups of rank $3$. It follows from Theorem~\ref{prime_sep}
that the orbits of rank $3$ subgroups are separated by $v_1$, $v_2$ and $v_{12}$. 
To construct a generating set for the invariants we add the intersection of the integer lattice with 
the line segment joining $v_2$ and $v_{12}$ 

    \begin{theorem}
      $\F[d_{1,3},d_{2,3},d_{3,3}^{-1}]^{\F^\times}$ is generated by $v_1$, $v_{12}$, $v_2$ and
      $$f_i = d_{1,3}^{p-i} d_{2,3}^{i(p+1)+1} {d_{3,3}}^{-(i+1)p}$$ for $i=1,2,\ldots,p-1$.
    \end{theorem}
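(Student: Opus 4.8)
The plan is to translate the statement into a question about the monoid of exponent vectors and then carry out a short Hilbert-basis computation. By the discussion at the start of Section~\ref{sep_sec}, the ring $\F[d_{1,3},d_{2,3},d_{3,3}^{-1}]^{\F^\times}$ is spanned by the monomials $d_{1,3}^{a_1}d_{2,3}^{a_2}d_{3,3}^{-a_3}$ whose exponents are non-negative solutions of Equation~\ref{inv_eq} with $r=3$, that is, after cancelling the common factor $p-1$,
$$a_1p^2+a_2p(p+1)=a_3(p^2+p+1),$$
and it is generated by the monomials attached to the primitive solutions. So it suffices to prove that the primitive solutions are precisely the exponent vectors $v_1=(p^2+p+1,0,p^2)$, $v_2=(0,p^2+p+1,p(p+1))$, $v_{12}=(p,1,p)$ and $f_i=(p-i,\ i(p+1)+1,\ (i+1)p)$ for $i=1,\dots,p-1$. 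A quick substitution confirms each of these solves the equation.

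I would begin with the solutions of height one. Eliminating $a_3=a_1+a_2-1$ from the equation shows that a non-negative solution has height one exactly when $a_1(p+1)+a_2=p^2+p+1$; for $a_1=0,1,\dots,p$ this gives the vectors $(a_1,\ p^2+p+1-(p+1)a_1,\ p(p+1-a_1))$, which are exactly $v_2$ (at $a_1=0$), the $f_i$ (at $a_1=p-i$, $1\le i\le p-1$) and $v_{12}$ (at $a_1=p$) --- in other words the lattice points on the segment joining $v_2$ and $v_{12}$. Each of these is primitive, since any non-negative solution of height one is primitive. It remains to note that $v_1$ is primitive, which is the assertion recorded before Lemma~\ref{vijlemma} that $v_1$ is the unique primitive invariant with support $\{d_{1,3},d_{3,3}^{-1}\}$, and then to show that $v_1$ is the \emph{only} primitive solution of height at least two.

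For the last point, let $(a_1,a_2,a_3)$ be a primitive solution of height at least two, and split on the sizes of $a_1$ and $a_2$. If $a_2=0$, then $\gcd(p^2,p^2+p+1)=1$ forces $p^2+p+1\mid a_1$, so the solution is a non-negative multiple of $v_1$, hence equals $v_1$. If $a_1=0$, then $\gcd(p(p+1),p^2+p+1)=1$ forces the solution to be a multiple of $v_2$, which has height one --- a contradiction. So assume $a_1,a_2\ge 1$, and subtract the height-one solution $g=(a_1,g_2,g_3)$ with the same first coordinate: if $a_1\ge p$ take $g=v_{12}$, noting that $(a_1-p,a_2-1,a_3-p)$ is again non-negative because the equation gives $a_3(p^2+p+1)=a_1p^2+a_2p(p+1)\ge p(p^2+p+1)$; if $1\le a_1\le p-1$ take $g=f_{p-a_1}$, and use that the equation determines $a_2$ modulo $p^2+p+1$ from $a_1$ (as $p(p+1)$ is invertible modulo $p^2+p+1$), together with $0\le g_2<p^2+p+1$ and $a_2\ge 1$, to conclude $a_2=g_2+m(p^2+p+1)$ with $m\ge 0$, whence $(a_1,a_2,a_3)-g=m\,v_2$. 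In either case the complement is a non-negative solution whose height is one less than that of $(a_1,a_2,a_3)$, hence positive, hence the complement is non-zero; since $g$ is also non-zero this contradicts primitivity. Therefore the primitive solutions are exactly those listed, and the listed monomials generate the ring.

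The real work is confined to routine verifications --- that the listed vectors satisfy the linear equation, the two coprimality facts $\gcd(p^2,p^2+p+1)=\gcd(p(p+1),p^2+p+1)=1$, and the observation that subtracting $g$ leaves a non-negative third coordinate automatically once the first two are non-negative (because the third coordinate is pinned down by the linear relation). The one place to keep an eye on is the case $1\le a_1\le p-1$, where one must check that the only solution with first coordinate $a_1$ and second coordinate below $p^2+p+1$ is the height-one vector $f_{p-a_1}$ itself; this is exactly the modular computation above. Conceptually there is nothing hidden: the solution cone is a two-dimensional wedge with extreme rays through $v_1$ and $v_2$, so its Hilbert basis consists of $v_1$ together with the lattice points of the first height layer, namely $v_2,f_{p-1},\dots,f_1,v_{12}$.
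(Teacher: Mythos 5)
Your proposal is correct and follows essentially the same route as the paper: identify the height-one solutions as the lattice points on the segment from $v_2$ to $v_{12}$, observe that $v_1$ is the unique primitive solution with $a_2=0$, and show any other candidate with $a_1,a_2\ge 1$ reduces by $v_{12}$ (if $a_1\ge p$) or by the $f_i$ with matching first coordinate (if $a_1<p$). The only cosmetic difference is that you pin down $a_2$ by a congruence modulo $p^2+p+1$, whereas the paper writes $a_3=p\widetilde{a}_3$ and works modulo $p+1$; these amount to the same observation that the solutions with fixed $a_1$ form a coset of $\mathbb{Z}v_2$.
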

    \begin{proof}
    In this case Equation~\ref{inv_eq} becomes $a_1p^2+a_2(p^2+p)=a_3(p^2+p+1)$. 
We will work with exponent sequences so $v_1=(p^2+p+1,0,p^2)$, $v_{12}=(p,1,p)$,
$v_2=(0,p^2+p+1,p^2+p)$ and
$$f_i=v_{12}+\frac{i}{p}(v_2-v_{12})=(p,1,p)+i(-1,p+1,p)=(p-i,(i+1)p+1,(i+1)p).$$
From this it is not hard to see that the primitive solutions of height one are precisely 
$v_2$, $v_{12}$ and $f_i$ for $i=1,\ldots, p-1$. Note that $v_1$ has height $p+1$.
We will show that there are no additional primitive solutions. Suppose $Q=(a_1,a_2,a_3)$
is an additional primitive solution. 
If $a_1=0$, then $Q$ is a multiple of $v_2$, and if $a_2=0$, then $Q$ is a multiple $v_1$.
Thus we may assume $a_1\ge 1$ and $a_2\ge 1$. If $a_1\ge p$, then $Q-v_{12}$ is a non-negative solution, 
so we may assume $p> a_1 \ge 1$. We may also assume that the  height of $Q$ is at least two.
    
Since $p$ divides $a_3$, we can write $a_3=p\wta_3$ with $\wta_3$ a positive integer. 
Hence the equation becomes  $$a_1p+a_2(p+1)=\wta_3(p^2+p+1).$$
Write $a_1=p-i$ for $i\in \{1,\ldots,p-1\}$. We will show that $Q-f_i$ is non-negative, contradicting the assumption that
$Q$ was an additional primitive solution.
Observe that $\wta_3\equiv i+1 \pmod{p+1}$. Since $i+1\in\{2,\ldots,p\}$ and $\wta_3$ is positive,
this means $\wta_3\ge i+1$ and $a_3\ge p(i+1)$. Since the height is at least two,
$$a_2\ge 2+a_3-a_1=a_3+i-p+2\ge p(i+1)+i-p+2=(p+1)i+2.$$
Hence $Q-f_i$ is non-negative, as required.
     \end{proof}

     If $V$ is a subgroup of rank 3, then $\lambda(V)=(3)$, or $\lambda(V)=(2,1)$ or $\lambda(V)=(1,1,1)$. 
Applying Theorems~\ref{field-thm} and \ref{codim1} gives the following.
     \begin{theorem}
       Let $V$ be a rank 3 subgroup of $(\F,+)$.  Then
       \begin{enumerate}
          \item[(i)] $\lambda(V)=(2,1)$ if and only if $v_{12}(V)=1$. \label{caseI}
           \item[(ii)] $\lambda(V)=(3)$ if and only if 
                          $d_{1,3}(V)=d_{2,3}(V)=0$ and $d_{3,3}(V) \neq 0$. \label{caseII}
       \end{enumerate}
     \end{theorem}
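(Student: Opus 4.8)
The plan is to deduce both parts directly from Theorems~\ref{field-thm} and \ref{codim1} once the notation is matched up. Since $V$ has rank $3$, its partition is one of $(3)$, $(2,1)$, or $(1,1,1)$, and these three cases are mutually exclusive; in fact each ``if and only if'' below will follow on its own from one of the two cited theorems, so I will not even need the trichotomy explicitly.

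For part (i) I would invoke Theorem~\ref{codim1} with $s=2$, which is legitimate since $s>1$ and the rank is $s+1=3$. The condition ``$d_{i,s+1}(V)=0$ for $1<i<s$'' is vacuous in this case, as no integer lies strictly between $1$ and $2$, so Theorem~\ref{codim1} reduces to the assertion that $\lambda(V)=(2,1)$ if and only if $\left(d_{1,3}^{p}d_{2,3}/d_{3,3}^{p}\right)(V)=1$. It then remains to check that the displayed rational function is exactly $v_{12}$: specialising Definition~\ref{v1} to $r=3$, $j=2$ gives $v_{12}=d_{2,3}d_{1,3}^{p(p^{3-2}-1)/(p-1)}/d_{3,3}^{p^{3-2}}=d_{1,3}^{p}d_{2,3}/d_{3,3}^{p}$, whence $v_{12}(V)=1$ is precisely the criterion supplied by Theorem~\ref{codim1}.

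For part (ii) I would first observe that $\lambda(V)=(3)$ holds precisely when $V=V_{\alpha,3}=\alpha\,\F_{p^3}$ for some $\alpha\in\F^\times$, equivalently when $V$ is a vector space over $\F_{p^3}$: a dilation $\alpha\,\F_{p^3}$ is clearly an $\F_{p^3}$-subspace, and conversely an $\F_{p^3}$-subspace of $\F_p$-rank $3$ has $\F_{p^3}$-dimension $1$ and so equals $\alpha\,\F_{p^3}$ for any nonzero $\alpha$ lying in it. Now apply Theorem~\ref{field-thm} with $s=3$, $m=1$, $r=3$: $V$ is an $\F_{p^3}$-vector space if and only if $d_{i,3}(V)=0$ for every $i$ not divisible by $3$, that is, for $i=1,2$. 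Since $V$ has rank $3$ we automatically have $d_{3,3}(V)\neq 0$, so this matches the condition displayed in (ii).

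The whole argument is a matter of bookkeeping, so I do not anticipate a genuine obstacle. The only points needing a moment's care are recognising that the ratio appearing in Theorem~\ref{codim1} coincides with $v_{12}$ when $r=3$, noting that the index range ``$1<i<s$'' is empty for $s=2$, and translating between ``$\lambda(V)=(3)$'' and ``$V$ is an $\F_{p^3}$-vector space''.
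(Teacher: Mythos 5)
Your proposal is correct and matches the paper's own (one-line) proof, which simply cites Theorems~\ref{field-thm} and \ref{codim1}; you have just filled in the routine specialisations ($s=2$ in Theorem~\ref{codim1}, with the range $1<i<2$ empty and the ratio equal to $v_{12}$; $s=3$, $m=1$ in Theorem~\ref{field-thm}, with $\lambda(V)=(3)$ meaning $V$ is a dilation of $\F_{p^3}$). No issues.
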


\section{Groups of Rank 4}\label{rnk4_sec}

In this section we consider finite subgroups of rank $4$. It follows from Theorem~\ref{rk4_sep}
that the orbits of rank $4$ subgroups are separated by $v_1$, $v_2$, $v_3$, $v_{12}$, $v_{13}$ and $v_{32}$.
To extend this to a generating set, we need to identify the primitive solutions to the equation
\begin{equation}\label{rk4eq}
        a_1(p^4-p^3)+a_2(p^4-p^2)+a_3(p^4-p)=a_4(p^4-1).
\end{equation}
Using Lemma~\ref{u2jlemma}, one additional solution is given by $u_{23}$.
It will be convenient to work with the exponent sequences:
$$v_1=\left(\frac{p^4-1}{p-1},0,0,p^3\right),\, v_2=\left(0,p^2+1,0,p^2\right),\, 
v_3=\left(0,0,\frac{p^4-1}{p-1},\frac{p(p^3-1)}{p-1}\right),$$
$$v_{12}=(p^2+p,1,0,p^2), \, v_{13}:= (p,0,1,p), \, u_{23}=(0,p^2,p+1,p^2+p).$$


To generate the ring of invariants, we need the following additional solutions:
\begin{itemize}
\item[(i)] lattice points on the line segment joining $v_2$ to $v_{12}$:
$$\mathcal L :=\{((p+1)j, p^2+1-jp,0,p^2)\mid 0\le j\le p\};$$
\item[(ii)] lattice points in the triangle with vertices $v_{13}$, $v_3$ and $u_{23}$:
$$
\Delta:=\{iv_{13}/p+ju_{23}/p^2+(p^2-pi-j)v_3/p^2\mid 0\le i\leq p,0\le j\le p^2-ip \}.$$
\end{itemize}

Note that
\begin{eqnarray*}
iv_{13}/p&+&ju_{23}/p^2+(p^2-pi-j)v_3/p^2\\&=&(i,j,p^3+(1-i)p^2+(1-i-j)(p+1), p^3+(1-i)p^2+(1-i-j)p)
\end{eqnarray*}
and $v_{32}$ is given by taking $i=0$ and $j=1$. 

\begin{theorem}\label{rk4gen} $\F[\F^5]^{\GL_4(\F_p)\times \F^\times}$ is minimally generated as an $\F$-algebra by $d_{4,4}t$ and
$$\{d_{1,4}^{a_1} d_{2,4}^{a_2} d_{3,4}^{a_3}t^{a_4}\mid (a_1,a_2,a_3,a_4)\in \{v_1\}\cup\mathcal L\cup\Delta\}.$$
\end{theorem}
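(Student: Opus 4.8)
The plan is to combine Proposition~\ref{sep_prop} with an explicit description of the primitive solutions to Equation~\ref{rk4eq}. Proposition~\ref{sep_prop} already tells us that $\F[\F^5]^{\GL_4(\F_p)\times\F^\times}=\F[d_{1,4},d_{2,4},d_{3,4},d_{4,4},t]^{\F^\times}$ is generated by $d_{4,4}t$ together with the monomials in $\F[d_{1,4},d_{2,4},d_{3,4},t]$ whose exponent sequence is a primitive solution to Equation~\ref{rk4eq} (with $t$ playing the role of $d_{4,4}^{-1}$, so the monomial is $d_{1,4}^{a_1}d_{2,4}^{a_2}d_{3,4}^{a_3}t^{a_4}$). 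So the whole theorem reduces to the purely combinatorial claim that the set of primitive solutions to Equation~\ref{rk4eq} is exactly $\{v_1\}\cup\mathcal L\cup\Delta$, and that this generating set is minimal.

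First I would verify that every element listed is in fact a solution and is primitive. For $v_1$, $v_2$, $v_3$, $v_{12}$, $v_{13}$, $u_{23}$ this follows from the constructions in Lemma~\ref{vijlemma} and Lemma~\ref{u2jlemma} (or by direct substitution); the height-one solutions in $\mathcal L$ and in $\Delta$ are automatically primitive, since any nonnegative solution of height one is primitive, and one checks that every point of $\mathcal L$ and $\Delta$ does have height one by summing the coordinates in the displayed parametrisations. It is worth noting $v_2=\mathcal L|_{j=0}$, $v_{12}=\mathcal L|_{j=1}$, $v_{13}=\Delta|_{i=1,j=0}$, $v_3=\Delta|_{i=0,j=0}$, $u_{23}=\Delta|_{i=0,j=p+1}$ and $v_{32}=\Delta|_{i=0,j=1}$, so the list is internally consistent. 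The integrality conditions (e.g. that $iv_{13}/p+ju_{23}/p^2+(p^2-pi-j)v_3/p^2$ has integer entries for the stated range of $i,j$) are exactly the displayed formulas, so this is a routine check.

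The main work — and the expected obstacle — is the converse: showing there are no other primitive solutions. I would argue by a case analysis on the support, mirroring the ``elimination'' strategy of Theorem~\ref{prime_sep} and Theorem~\ref{rk4_sep}. Let $Q=(a_1,a_2,a_3,a_4)$ be a primitive solution. If $a_3=0$, then $Q$ is supported on $\{d_{1,4},d_{2,4},t\}$; here $\gcd(p^4-1,p^4-p^3)$-type arithmetic forces $Q$ to be a nonnegative combination of $v_1$ and $v_2$, and one shows that any such primitive combination lies on the segment $\mathcal L$ — this is the rank-$4$ analogue of the rank-$3$ computation in Section~\ref{rnk3_sec}. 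If $a_3\ge 1$, I would use the relations $v_{13}=(p,0,1,p)$, $u_{23}=(0,p^2,p+1,p^2+p)$ and $v_3$ to subtract off copies of these, reducing to a problem inside the cone they span; primitivity then pins $Q$ down to a lattice point of the triangle $\Delta$ (when $a_1$ or $a_2$ is also involved) — essentially because $\{v_{13},u_{23},v_3\}$ generate, over $\Z_{\ge0}$, the relevant sub-semigroup, and a primitive element cannot stick out past the ``height-one layer'' $\Delta$. The delicate points are (a) the divisibility arguments controlling which exponents must be divisible by $p$, $p+1$, or $(p^4-1)/(p-1)$ (here Lemma~\ref{u2jlemma} explains why $p+1\mid a_3$ when the support is $\{d_{2,4},d_{3,4},t\}$), and (b) making sure that no lattice point just outside $\Delta$ but still of small height is primitive; this requires showing that any such point is dominated by one of the already-listed generators, i.e.\ that $Q$ minus some generator is still nonnegative.

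Finally, for minimality I would show that no generator in $\{d_{4,4}t\}\cup\{v_1\}\cup\mathcal L\cup\Delta$ is a product of others. Since $d_{4,4}t$ is the unique generator involving $d_{4,4}$ positively, it is clearly indecomposable. For the rest, every listed exponent sequence other than $v_1$ has height one, and a height-one element of the monoid cannot be written as a sum of two nonzero elements (each summand would have height at least one, forcing one of them to be the zero vector); $v_1$ has height $p+1$ but its support $\{d_{1,4},t\}$ is disjoint from the support of any height-one generator except those in $\mathcal L$, and writing $v_1$ as a sum involving an element of $\mathcal L$ would introduce a positive $d_{2,4}$-exponent, which $v_1$ does not have. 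Hence the generating set is minimal, completing the proof.
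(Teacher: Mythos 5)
Your overall strategy is the paper's: reduce, via Proposition~\ref{sep_prop}, to the purely combinatorial problem of identifying the primitive solutions of Equation~\ref{rk4eq}, observe that the listed height-one solutions are automatically primitive, and then show that any other non-negative solution dominates a listed one. The $a_3=0$ case and the minimality discussion are essentially right (modulo small indexing slips: in fact $v_{12}=\mathcal L|_{j=p}$, $v_{13}=\Delta|_{i=p,j=0}$ and $u_{23}=\Delta|_{i=0,j=p^2}$, not the parameter values you quote). But the core of the theorem --- ruling out additional primitive solutions with $a_3>0$ and height at least two --- is not actually proved, and the one substantive claim you offer in its place is false: $\{v_{13},u_{23},v_3\}$ does \emph{not} generate the sub-semigroup of solutions with $a_3>0$ over ${\mathbb Z}_{\ge 0}$. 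For example $v_{32}=(0,1,p^3+p^2,p^3+p^2)$ has $a_2=1$ while $u_{23}$ is the only one of the three vertices with a nonzero second entry, namely $p^2$; this is precisely why all the interior lattice points of $\Delta$ are needed as generators. So ``primitivity pins $Q$ down to a lattice point of $\Delta$'' does not follow from anything you have written, and your item (b) is exactly the open part of the argument, not a routine check.

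What is actually required there is a genuine chain of estimates. The paper's proof introduces the substitution $a_4=p\widetilde{a}_4$, $a_3-\widetilde{a}_4=p\widetilde{a}_3$, $a_2+\widetilde{a}_3=p\widetilde{a}_2$, $a_1+\widetilde{a}_2=p\widetilde{a}_1$, which linearises the equation to $\widetilde{a}_1+\widetilde{a}_2+\widetilde{a}_3=\widetilde{a}_4$ with $\widetilde{a}_1$ equal to the height, identifies $\Delta$ as exactly the height-one solutions with $a_3>0$, and then, for height $\ge 2$, successively uses $v_{13}$ to force $a_1<p$, deduces $\widetilde{a}_2\ge p+1$, uses $v_2$ to force $\widetilde{a}_3\ge p$, compares $Q$ against the specific element of $\Delta$ with $i=a_1$, $j=p^2-pa_1$, and finally derives $a_3>p^3+p^2+p+1$ so that $Q-v_3$ is non-negative. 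None of these inequalities is visible from the cone picture alone, so as it stands your proposal has a genuine gap at the decisive step.
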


\begin{proof}
We prove the theorem by identifying the primitive solutions to Equation~\ref{rk4eq}.
Suppose, by way of contradiction $Q=(a_1,a_2,a_3,a_4)$ is a primitive solution not included in the above list.

We start by considering solutions with $a_3=0$. In this case $p^2$ divides $a_4$ and $p+1$ divides $a_1$.
Writing $a_4=\bar{a}_4 p^2$ and $a_1=\bar{a}_1(p+1)$, Equation~\ref{rk4eq} becomes
$\bar{a}_1p+a_2=\bar{a}_4(p^2+1)$. If $a_2=0$, then $Q$ is a multiple of $v_1$. If $a_2>0$ and $\bar{a}_1\ge p$,
then $Q-v_{12}$ is non-negative. Thus we may assume $\bar{a}_1<p$. It is then easy to see that 
$Q-u$ is non-negative for some $u\in\mathcal L$.     Similarly, it is clear that each of the solutions $u\in\mathcal L$ is
primitive.

In the following, we assume $a_3\not=0$.
It is clear that $p$ divides $a_4$; writing $a_4=p\wta_4$, Equation~\ref{rk4eq} becomes
$$a_1p^2+a_2(p^2+p)+a_3(p^2+p+1)=\wta_4(p^3+p^2+p+1).$$
Thus $p$ divides $a_3-\wta_4$; writing $a_3-\wta_4=p\wta_3$,
gives $$a_1p+a_2(p+1)+\wta_3(p^2+p+1)=\wta_4p^2.$$
Hence $p$ divides $a_2+\wta_3$; writing $a_2+\wta_3=p\wta_2$,
gives $$a_1+\wta_2(p+1)+\wta_3p=\wta_4p.$$
Finally, writing $a_1+\wta_2=p\wta_1$,
gives $$\wta_1+\wta_2+\wta_3=\wta_4.$$
Note that $\wta_1$ is in fact the height of the solution. In this new basis,
the constraints $a_1\geq 0$, $a_2\ge 0$, $a_3>0$ and $a_4> 0$ become
$$p\wta_1\ge \wta_2, \hspace{1cm} p\wta_2\ge \wta_3, 
\hspace{1cm} p\wta_3> -\wta_4, \hspace{1cm}
\wta_4>0.$$

Substituting and simplifying gives $a_1+a_3=(p+1)(\wta_1+\wta_3)$.
Since we are assuming $a_3>0$,  we have $\wta_1+\wta_3>0$. Suppose $Q$ has height one, in other words
$\wta_1=1$. Then $\wta_3\ge 0$. Since $p\wta_1\ge \wta_2$ and $p\wta_2\ge \wta_3$, we conclude
$\wta_2\in\{0,\ldots,p\}$ and $\wta_3\in\{0,\ldots,p\wta_2\}$. The $(i,j)$ solution from the family $\Delta$
gives a solution with $\wta_2=p-i$ and $\wta_3=p^2-pi-j$. Thus $\Delta$ is precisely the set of primitive solutions with 
height one and $a_3>0$.   In particular, being of height 1, each solution in $\Delta$ is primitive.

Suppose $\wta_1>1$ and $a_3>0$. If $a_1\ge p$, then $Q-v_{13}$ is non-negative, so we may assume
$a_1<p$. Thus $p\wta_1-\wta_2\le p-1$, giving $\wta_2\ge p\wta_1-p+1\ge 2p-p+1=p+1$.
Hence $a_2\ge p^2+p-\wta_3$. 
If  $\wta_3 \leq p-1$ then $a_2 \geq p^2+ p - \wta_3 \geq p^2 + p -(p-1) = p^2+1$ and so 
$Q-v_2$ is non-negative.  Therefore we may assume that $\wta_3 \geq p$.
Let $u$ denote the solution in $\Delta$ given by $i=a_1$ and $j=p^2-pa_1$.
Then $u=(a_1,p^2-pa_1,p+1-a_1,p^2+p-pa_1)$. Since $a_3=(p+1)(\wta_1+\wta_3)-a_1>(p+1)-a_1$,
we have $Q-u$ is non-negative unless $a_2<p^2-pa_1$. Thus we may assume $p^2-1\ge pa_1+a_2$.
Thus $p^2-1\ge p^2\wta_1-\wta_3\ge 2p^2-\wta_3$, giving $\wta_3\ge p^2+1$.
Hence $\wta_4=\wta_1+\wta_2+\wta_3\ge 2+(p+1)+(p^2+1)=p^2+p+4$, giving 
$a_3=\wta_4+p\wta_3\ge (p^2+p+4)+p(p^2+1)=p^3+p^2+2p+5>p^3+p^2+p+1$.
Therefore $Q-v_3$ is non-negative, giving the required contradiction.
\end{proof}

\begin{theorem}\label{rk4p^2}
Suppose $E$ is a subgroup of rank $4$. Then $E$ contains a dilation of $\F_{p^2}$ if and only if
$$\left(d_{1,4}^{p^2}d_{2,4}^pd_{3,4}-d_{4,4}^pd_{1,4}^{p^2+1}-d_{3,4}^{p^2+1}\right)(E)=0.$$
\end{theorem}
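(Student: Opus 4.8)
The plan is to recast the identity as an orbit-product statement, in the same spirit as the proofs of Theorems~\ref{codim1} and \ref{embedding-thm}. Regard $d_{1,2}\in\F_p[x_1,x_2]$ as an element of $\F_p[x_1,x_2,x_3,x_4]$ and let $\Pi$ be the product of the $GL_4(\F_p)$-orbit of $d_{1,2}$. Since $d_{1,2}$ is a $GL_2(\F_p)$-invariant, its stabiliser in $GL_4(\F_p)$ is the parabolic subgroup fixing $\langle x_1,x_2\rangle$, so the translates of $d_{1,2}$ are indexed by the two-dimensional subspaces $U$ of $\Span_{\F_p}\{x_1,\ldots,x_4\}$; write $d_{1,2}^U$ for the translate attached to $U$, that is, $d_{1,2}$ evaluated on any $\F_p$-basis of $U$ (well defined, as $d_{1,2}$ is $GL_2(\F_p)$-invariant). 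Fix an $\F_p$-basis $\underline c=(c_1,\ldots,c_4)$ of $E$; the isomorphism $x_i\mapsto c_i$ sends each $U$ to a rank-two subgroup $W_U\subseteq E$, giving a bijection onto the set of rank-two subgroups of $E$, and $d_{1,2}^U(\underline c)=d_{1,2}(W_U)$. By the rank-two analysis of Section~\ref{prelim}, $d_{1,2}(W_U)=0$ precisely when $W_U$ is a dilation of $\F_{p^2}$. Hence $\Pi(\underline c)=\prod_U d_{1,2}^U(\underline c)$ vanishes exactly when $E$ contains a dilation of $\F_{p^2}$, and $\Pi(\underline c)$ depends only on $E$ since $\Pi\in\F_p[d_{1,4},\ldots,d_{4,4}]$. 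So it suffices to show that $f:=d_{1,4}^{p^2}d_{2,4}^pd_{3,4}-d_{4,4}^pd_{1,4}^{p^2+1}-d_{3,4}^{p^2+1}$ is a non-zero scalar multiple of $\Pi$.

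Both $f$ and $\Pi$ are homogeneous of degree $(p^2-p)(p^2+1)(p^2+p+1)$: for $\Pi$ this is $\deg(d_{1,2})=p^2-p$ times the number $(p^2+1)(p^2+p+1)$ of two-dimensional subspaces of $\F_p^4$, and for $f$ it follows from $\deg(d_{i,4})=p^4-p^{4-i}$ by a short computation. Also $f\neq 0$, being a sum of three distinct monomials in the algebraically independent invariants $d_{1,4},\ldots,d_{4,4}$. It is therefore enough to prove $\Pi\mid f$ in $\F_p[x_1,x_2,x_3,x_4]$. Since $f$ is a polynomial in the Dickson invariants it is $GL_4(\F_p)$-invariant, so if $d_{1,2}\mid f$ then $d_{1,2}^U\mid f$ for every $U$; and by Dickson's factorisation (recalled in Section~\ref{prelim}) each $d_{1,2}^U$ is a product of irreducible binary quadratics whose variables span $U$, so the $d_{1,2}^U$ with distinct $U$ are pairwise coprime, whence $\Pi=\prod_U d_{1,2}^U$ divides $f$. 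Thus the whole argument reduces to the single claim that $d_{1,2}$ divides $f$.

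For that I would work modulo the ideal $\mathfrak n:=\langle d_{1,2}\rangle$, using the recursion $d_{i,k}=d_{i,k-1}^p-d_{i-1,k-1}D_{k-1}^{p-1}(x_k)$ (Proposition~1.3(b) of \cite{Wilkerson}), exactly as in the proof of Theorem~\ref{codim1}. Writing $h_2:=D_2^{p-1}(x_3)$ and $h_3:=D_3^{p-1}(x_4)$, one has $d_{1,3}\equiv_{\mathfrak n}-h_2$, $d_{2,3}\equiv_{\mathfrak n}d_{2,2}^p$, $d_{3,3}=-d_{2,2}h_2$, and hence $d_{1,4}\equiv_{\mathfrak n}-(h_2^p+h_3)$, $d_{2,4}\equiv_{\mathfrak n}d_{2,2}^{p^2}+h_2h_3$, $d_{4,4}=d_{2,2}h_2h_3$, together with the crucial relation $d_{3,4}\equiv_{\mathfrak n}d_{2,2}^p\,d_{1,4}$. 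Substituting these into $f$ and expanding $(d_{2,2}^{p^2}+h_2h_3)^p=d_{2,2}^{p^3}+(h_2h_3)^p$ by the Frobenius, the first term of $f$ splits as the sum of $d_{2,2}^{p^3+p}d_{1,4}^{p^2+1}$ and $d_{2,2}^p(h_2h_3)^pd_{1,4}^{p^2+1}$, which are cancelled exactly by the other two terms; so $f\equiv_{\mathfrak n}0$.

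The main obstacle is precisely this last computation — correctly reducing $d_{3,4}$ and $d_{4,4}$ modulo $\mathfrak n$ and then seeing the three-term cancellation; the degree bookkeeping, the coprimality of the translates, and the dictionary between the rank-two subgroups of $E$ and the two-dimensional subspaces of $\F_p^4$ are routine. I note in passing that the easy implication ``$E$ contains a dilation of $\F_{p^2}$'' $\Rightarrow$ ``$f(E)=0$'' already drops out of $d_{1,2}\mid f$ alone: extend an $\F_p$-basis of the $\F_{p^2}$-subgroup to a basis of $E$ and evaluate $f$, which is zero because $d_{1,2}$ vanishes on the first two coordinates.
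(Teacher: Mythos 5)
Your proposal is correct and follows essentially the same route as the paper: both identify $f$ with the $GL_4(\F_p)$-orbit product of $d_{1,2}$ by comparing degrees and invariance and then reducing to the single claim $d_{1,2}\mid f$, proved via the Wilkerson recursion. The only immaterial difference is organisational --- the paper routes the divisibility through the intermediate relation $d_{1,3}^pd_{2,3}-d_{3,3}^p\equiv 0\pmod{d_{1,2}}$, whereas you substitute the reductions of all the $d_{i,4}$ directly; your three-term cancellation checks out.
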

\begin{proof}
The argument is similar to the proof of Theorem~\ref{codim1}. The subgroup
$E={\rm Span}_{\F_P}\{c_1,c_2,c_3,c_4\}$ contains a dilation of $\F_{p^2}$ if and only if
there is a $g\in GL_4(\F_p)$ such that
$d_{1,2}(g(c_1,c_2,c_3,c_4))=0$. The stabiliser of $d_{1,2}$ 
has order $|GL_2(\F_p)|^2p^4$. Therefore the $GL_4(\F_p)$-orbit product of $d_{1,2}$ has degree
$${\rm deg}(d_{1,2})(p^2+1)(p^3-1)/(p-1)=p(p^2+1)(p^3-1).$$ The polynomial
$$H:=d_{1,4}^{p^2}d_{2,4}^pd_{3,4}-d_{4,4}^pd_{1,4}^{p^2+1}-d_{3,4}^{p^2+1}$$
is a non-zero homogeneous $GL_4(\F_p)$-invariant of degree $p(p^2+1)(p^3-1)$. We will show that $d_{1,2}$ divides $H$ and, 
therefore, $H$ is, up to multiplication by a non-zero scalar, the $GL_4(\F_p)$-orbit product of $d_{1,2}$.
Hence $H(E)=0$ if and only if $E$ contains a dilation of $\F_{p^2}$.

We first show that  $d_{1,2}$ divides $d_{1,3}^pd_{2,3}-d_{3,3}^p$  and then
work modulo the ideal $\mathfrak r:=\langle d_{1,3}^pd_{2,3}-d_{3,3}^p\rangle$.
The argument is essentially the initial case in the induction argument in the proof of Theorem~\ref{codim1}.
For convenience , we use $\widetilde{h}$ to denote $D_2^{p-1}(x_3)$. Thus, using the formula
$d_{i,3}=d_{i,2}^p-d_{i-1,2}\widetilde{h}$, we have 
$d_{1,3}=d^p_{1,2}-\widetilde{h}$,  $d_{2,3}=d^p_{2,2}-d_{1,2}\widetilde{h}$ and $d_{3,3}=-d_{2,2}\widetilde{h}$.
Therefore
 $$d_{1,3}^pd_{2,3}-d_{3,3}^p=(d^p_{1,2}-\widetilde{h})^p(d_{2,2}^p-d_{1,2}\widetilde{h})+d_{2,2}^p\widetilde{h}^p
 \equiv_{\langle d_{1,2}\rangle} 0.$$

We set $h=D_3^{p-1}(x_4)$ and use the
equations $d_{i,4}=d_{i,3}^p-d_{i-1,3}h$. Substituting and simplifying gives
$d_{2,4}^pd_{3,4}-d_{4,4}^pd_{1,4}\equiv_{\mathfrak r}d_{2,3}^{p^2}d_{3,4}$ and
$d_{1,4}d_{2,3}-d_{3,4}\equiv_{\mathfrak r}0$. Thus
$$H=d_{1,4}^{p^2}\left(d_{2,4}^pd_{3,4}-d_{4,4}^pd_{1,4}\right)-d_{3,4}^{p^2+1}\equiv_{\mathfrak r} 
d_{3,4}\left(d_{1,4}d_{2,3}-d_{3,4}\right)^{p^2}\equiv_{\mathfrak r} 0$$
as required.
\end{proof}

\begin{theorem} Suppose $E$ is a subgroup of rank $4$. Then
\begin{itemize}
\item[(i)] $\lambda(E)=(4)$ if and only if $d_{i,4}(E)=0$ for $i\in\{1,2,3\}$;
\item[(ii)] $\lambda(E)=(2,2)$ if and only if $d_{1,4}(E)=d_{3,4}(E)=0$ and $d_{2,4}(E)\not= 0$;
\item[(iii)] $\lambda(E)=(3,1)$ if and only if $d_{2,4}(E)=0$ and $v_{13}(E)=1$;
\item[(iv)] $\lambda(E)=(2,1,1)$ if and only if $d_{1,4}(E)\not=0$, $d_{2,4}(E)\not=0$ and\\ 
$\left(\left(v_{12}-v_{13}\right)^pv_{13}-v_1\right)(E)=0$.
\end{itemize}
\end{theorem}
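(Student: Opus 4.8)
The plan is to handle the four cases by placing each partition within the lexicographic order on partitions of $4$, namely $(4)>(3,1)>(2,2)>(2,1,1)>(1,1,1,1)$, and then converting the defining property of the partition into a statement about the Dickson coordinates $d_{1,4}(E),\dots,d_{4,4}(E)$ via Theorems~\ref{field-thm}, \ref{codim1}, and \ref{rk4p^2}. For (i): $\lambda(E)=(4)$ means precisely that $E$ is a dilation of $\F_{p^4}$, equivalently (after dividing by a nonzero element of $E$) that $E$ is a one-dimensional $\F_{p^4}$-vector space with $\F_{p^4}\subseteq\F$. One implication is Theorem~\ref{field-thm} with $q=p^4$, $m=1$, $r=4$: it forces $d_{i,4}(E)=0$ for all $i\not\equiv 0\pmod 4$, i.e.\ $i\in\{1,2,3\}$. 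For the converse I would note that if these three coordinates vanish then $F_E(t)=t^{p^4}+d_{4,4}(E)\,t$, so $E\setminus\{0\}$ is the set of $p^4-1$ roots of the separable polynomial $t^{p^4-1}+d_{4,4}(E)$; these roots form one coset of the group of $(p^4-1)$-st roots of unity, which forces $\F_{p^4}\subseteq\F$ and exhibits $E$ as a dilation of $\F_{p^4}$.

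Case (ii) runs the same way: $\lambda(E)=(2,2)$ holds iff $E$ is a two-dimensional $\F_{p^2}$-vector space (with $\F_{p^2}\subseteq\F$) that is not a dilation of $\F_{p^4}$. Theorem~\ref{field-thm} with $q=p^2$, $m=2$, $r=4$ gives $d_{1,4}(E)=d_{3,4}(E)=0$; the converse, including $\F_{p^2}\subseteq\F$, follows by the same coset argument applied to $F_E(t)=t^{p^4}+d_{2,4}(E)\,t^{p^2}+d_{4,4}(E)\,t$, whose nonzero roots now lie in $\F_{p^2}^\times$-cosets. Finally ``not a dilation of $\F_{p^4}$'' is, by (i), exactly $d_{2,4}(E)\neq 0$. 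Case (iii) is a direct application of Theorem~\ref{codim1} with $s=3$: the partition of a rank-$4$ group is $(3,1)$ iff $d_{i,4}(E)=0$ for $1<i<3$, i.e.\ $d_{2,4}(E)=0$, together with $\big(d_{1,4}^{p}d_{3,4}/d_{4,4}^{p}\big)(E)=1$; and $d_{1,4}^{p}d_{3,4}/d_{4,4}^{p}$ is exactly $v_{13}$, whether one reads that off Definition~\ref{v1} or the exponent sequence $v_{13}=(p,0,1,p)$ recorded in Section~\ref{rnk4_sec}.

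For (iv) I would first discard the three larger partitions: $d_{1,4}(E)\neq 0$ excludes $\lambda(E)\in\{(4),(2,2)\}$ by (i) and (ii), and $d_{2,4}(E)\neq 0$ excludes $\lambda(E)=(3,1)$ by (iii), so under the two non-vanishing hypotheses $\lambda(E)\in\{(2,1,1),(1,1,1,1)\}$. Then I would observe that these two are distinguished by whether $E$ contains a dilation of $\F_{p^2}$: if $\gamma\F_{p^2}\subseteq E$ then $\gamma^{-1}E=\F_{p^2}\oplus W$ with $W$ of rank $2$, which gives a decomposition of $\gamma^{-1}E$, hence of $E$, with a part of size $\geq 2$, so $\lambda(E)=(2,1,1)$; conversely a decomposition realizing $(2,1,1)$ has a summand $V_{\alpha,2}=\alpha\F_{p^2}$. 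Now Theorem~\ref{rk4p^2} translates ``$E$ contains a dilation of $\F_{p^2}$'' into $\big(d_{1,4}^{p^2}d_{2,4}^{p}d_{3,4}-d_{4,4}^{p}d_{1,4}^{p^2+1}-d_{3,4}^{p^2+1}\big)(E)=0$ (equivalently, that the $GL_4(\F_p)$-orbit product of $d_{1,2}$ vanishes at a basis of $E$, in the manner of the proof of Theorem~\ref{codim1}). The remaining step is to rewrite this polynomial condition, on the locus $d_{1,4}d_{4,4}\neq 0$ where $v_1,v_{12},v_{13}$ are all defined, in the asserted form $\big((v_{12}-v_{13})^{p}v_{13}-v_1\big)(E)=0$: expand the three scale-invariant monomials in the Dickson coordinates, clear the common powers of $d_{1,4}$ and $d_{4,4}$, and compare the resulting numerator with the orbit product of $d_{1,2}$.

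The step I expect to be the real work is exactly this last reconciliation in (iv). Parts (i)--(iii) are essentially translations: the only non-formal ingredient is the elementary coset observation used to recover $\F_{p^s}\subseteq\F$ from the shape of $F_E$, and the rest is a citation of Theorems~\ref{field-thm} and \ref{codim1}. In (iv), by contrast, one has to track the exponents in $(v_{12}-v_{13})^{p}v_{13}-v_1$ carefully and verify that, once $d_{1,4}$ and $d_{4,4}$ are inverted, this invariant and the orbit product $d_{1,4}^{p^2}d_{2,4}^{p}d_{3,4}-d_{4,4}^{p}d_{1,4}^{p^2+1}-d_{3,4}^{p^2+1}$ of Theorem~\ref{rk4p^2} have the same zero locus; this monomial bookkeeping is where all the care is needed.
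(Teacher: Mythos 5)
Your route is the paper's own: parts (i) and (ii) from Theorem~\ref{field-thm}, part (iii) from Theorem~\ref{codim1} with $s=3$, and part (iv) by reducing to ``$E$ contains a dilation of $\F_{p^2}$'' and citing Theorem~\ref{rk4p^2}. Parts (i)--(iii) are complete; your coset argument recovering $\F_{p^4}\subseteq\F$ (resp.\ $\F_{p^2}\subseteq\F$) from the shape of $F_E$ is a worthwhile supplement, since Theorem~\ref{field-thm} assumes $\F_q\subset\F$ as a hypothesis and the paper's one-line citation does not address it.

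The gap is in (iv), and it sits exactly in the step you defer as ``monomial bookkeeping.'' Carrying out that computation one finds
\[
H\cdot\frac{d_{1,4}^{p^3+p}}{d_{4,4}^{p^3+p}}\;=\;v_{12}^pv_{13}-v_{13}^{p^2+1}-v_1\;=\;\left(v_{12}-v_{13}^p\right)^pv_{13}-v_1,
\]
whereas the printed condition is $v_{12}^pv_{13}-v_{13}^{p+1}-v_1$; the two differ by $v_{13}^{p+1}\left(v_{13}^{p^2-p}-1\right)$, which is not identically zero on the locus $d_{1,4}d_{4,4}\ne0$, so they do not have the same zero set and the reconciliation you promise cannot be completed for the formula as written. (The paper's proof quotes the multiplier $d_{1,4}^{p^3+p}/d_{4,4}^{p^3+1}$, which is not even of weight zero against $H$; the exponent of $d_{4,4}$ must be $p^3+p$.) There is a second, independent gap: your argument shows only that, \emph{given} $d_{1,4}(E)\ne0$ and $d_{2,4}(E)\ne0$, the vanishing of $H$ is equivalent to $\lambda(E)=(2,1,1)$; it never shows that $\lambda(E)=(2,1,1)$ forces $d_{2,4}(E)\ne0$, and that implication fails. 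Indeed, by Theorem~\ref{comp-thm} the group $E=\F_{p^2}\oplus W$ satisfies $d_{2,4}(E)=d_{2,2}(\omega_2(W))-1$, $d_{1,4}(E)=-d_{3,4}(E)=d_{1,2}(\omega_2(W))$ and $d_{4,4}(E)=-d_{2,2}(\omega_2(W))$; choosing $W$ with $d_{2,2}(\omega_2(W))=1$ and $d_{1,2}(\omega_2(W))^{p+1}\notin\{0,1\}$ gives $d_{2,4}(E)=0$, $d_{1,4}(E)\ne0$ and $v_{13}(E)\ne1$, so $\lambda(E)=(2,1,1)$ by parts (i)--(iii) even though $d_{2,4}(E)=0$. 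The nondegeneracy condition in (iv) should be ``not both $d_{2,4}(E)=0$ and $v_{13}(E)=1$,'' i.e.\ $\lambda(E)\ne(3,1)$. Both problems are inherited from the statement and from the paper's own two-line proof, but since your plan stakes the argument of (iv) on verifying the printed formula, you need to surface them rather than assert that the bookkeeping closes.
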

\begin{proof}
Parts (i) and (ii) follow from Theorem~\ref{field-thm}.
Part (iii) follows from Theorem~\ref{codim1}. For part (iv), if $\lambda(E)=(2,1,1)$
then $E$ contains a dilation of $\F_{p^2}$ but does not contain a dilation of $\F_{p^3}$ and is not
an $\F_{p^2}$-subspace. Therefore, using   Theorem~\ref{rk4p^2}, $\lambda(E)=(2,1,1)$ if
and only if $H(E)=0$ but $d_{2,4}(E)\not=0$ and $d_{1,4}(E)\not=0$. Multiplying $H$ by 
$d_{1,4}^{p^3+p}/d_{4,4}^{p^3+1}$ gives  
$\left(v_{12}-v_{13}\right)^pv_{13}-v_1$.
\end{proof}

\begin{remark} It is not clear that the partition is the best way to understand the structure of a subgroup.
Consider $V=\F_{p^3}+\F_{p^2}\subset \overline{\F_p}$. This is a rank $4$ subgroup with partition $(3,1)$.
The equivalence class of the subgroup is determined by $v_{12}(V)=0$, $v_{13}(V)=1$ and $v_1(V)=-1$.
\end{remark}

\section{Invariants for Subgroups of Rank 5} \label{rnk5_sec}
\newcommand{\xx}[1]{{\widetilde{a}_#1}}
\newcommand{\xp}[1]{{\overline{x}'_#1}}
\newcommand{\xpp}[1]{{\overline{x}''_#1}}

In this section we will construct a generating set for $\F[\F^6]^{GL_5(\F_p)\times \F^\times}$.
To do this, we identify the primitive non-negative integer solutions to 
\begin{equation}\label{rk5eqn}
a_1p^4+a_2p^3\left(\frac{p^2-1}{p-1}\right)
+a_3p^2\left(\frac{p^3-1}{p-1}\right)+a_4p\left(\frac{p^4-1}{p-1}\right)=a_5\left(\frac{p^5-1}{p-1}\right).
\end{equation}
Since we are interested in non-zero solutions, we assume $a_5>0$. As in the proof of Theorem~\ref{rk4gen},
it will be useful to introduce an alternative basis:
$a_5=p\xx5$, $a_4 = p\xx4+\xx5$, $a_3=p\xx3 - \xx4$, $a_2 = p\xx2 - \xx3$ and $a_1 = p\xx1 - \xx2$.
In these new variables, the above equation becomes
$\xx1+\xx2+\xx3+\xx4=\xx5$,
the height of the solution, $a_1+a_2+a_3+a_4-a_5$, becomes $\xx1$ and the constraints $a_5>0$ and $a_i\geq 0$ 
for $i\in\{1,2,3,4\}$ become
\[ p\xx1\geq \xx2, \hspace{3mm}  p\xx2\geq \xx3,  \hspace{3mm}  p\xx3\geq \xx4,  \hspace{3mm} p\xx4\geq -\xx5,
\hspace{3mm}\xx5>0.\]
We will write $(a_1,a_2,a_3,a_4,a_5)=[\xx1,\xx2,\xx3,\xx4,\xx5]$.
It follows from Theorem~\ref{prime_sep} that orbits of finite subgroups of rank $5$ are separated by
$$\{v_1,v_2,v_3,v_4,v_{12},v_{13},v_{14},v_{23},v_{24},v_{34}\}.$$ We will extend this set to a generating set.
As in Section 6, we will identify an invariant monomial with its exponent sequence. To clarify our explanation of the generating set we will describe the exponent sequences as lattice points within various polyhedra.

We denote by $\mathcal T_1$ the set of lattice points in the tetrahedron with vertices
	\begin{align*}
	        v_4 =& (0,0,0,p^4+p^3+p^2+p+1,p^4+p^3+p^2+p)\\
	         &= \quad [1,p,p^2,p^3,p^3+p^2+p+1],\\
	        v_{14} =& (p,0,0,1,p) = [1,0,0,0,1],\\
		w_{24}:=&(0,p^2,0,p+1,p^2+p)=[1,p,0,0,p+1],\\
		w_{34}:=&(0,0,p^3,p^2+p+1,p^3+p^2+p)=[1,p,p^2,0,p^2+p+1].
	\end{align*}

Similarly $\mathcal T_2$ will denote 
the set of lattice points in the tetrahedron with vertices $v_{14}$, 
\begin{align*}
	        v_{23} =& (0,p^2,1,0,p^2)=[1,p,0,-1,p],\\
	        v_{34} =& (0,0,p^3+p,1,p^3+p) =  [1,p,p^2,-p,p^2+1],\\
	        w_{43}:=&(0,0,p^3+1,p^2,p^3+p^2)=[1,p,p^2,-1,p^2+p].
	\end{align*}

  Let $\Delta_1$ denote the set of lattice points in the triangle with vertices $v_{23}$, 
\begin{align*}
	        v_{32} =& (0,1,p^3,0,p^3)=[1,p,p^2-1,-p,p^2],\\
	        v_{312}:=&(p-1,1,p^2-p+1,0,p^2)=[1,1,p-1,-1,p].
	\end{align*}

Let $\Delta_2$ denote the set of 
lattice points in the triangle with vertices $v_{312}$, $v_{23}$ and
 $$v_{123}:=(p^2-1,p,1,0,p^2)=[p,1,0,-1,p].$$ 

Let $\Delta_3$ denote the set of lattice points in the triangle with vertices 
      \begin{align*}
                v_2 =& (0,p^4+p^3+p^2+p+1,0,0,p^4+p^3)\\
                &\quad=[p^2+p+1,p^3+p^2+p,-p-1,-p^2-p,p^3+p^2],\\
                w_{21}&:=(p,p^3+1,0,0,p^3)=[p+1,p^2,-1,-p,p^2],\\
                w_{24}&:=(0,p^3+p^2+1,0,p,p^3+p^2)=[p+1,p^2+p,-1,-p,p^2+p].		
	\end{align*}

Let $\mathcal L_1$ denote the set of lattice points on the line segment  joining 
\begin{align*}
                v_3 =& (0,0,p^4+p^3+p^2+p+1,0,p^4+p^3+p^2)\\
                &\quad=[p+1,p^2+p,p^3+p^2,-p^2-p-1,p^3+p^2+p],\\
                v_{13}&=(p^2+p,0,1,0,p^2)=[p+1,0,0,-1,p].
	\end{align*}

Let $\mathcal L_2$ denote the set of  lattice points on the line segment  joining $w_{21}$ to 
\begin{align*}
                v_{12} =& (p^3+p^2+p,1,0,0,p^3)=[p^2+p+1,0,-1,-p,p^2].
	\end{align*}

Then the generating set is the set
$$\mathcal S:=\left(\bigcup_{i=1}^2\mathcal T_i\right) \cup \left(\bigcup_{i=1}^3 \Delta_i\right)
     \cup \left(\bigcup_{i=1}^2\mathcal L_i\right) \cup \{v_1\}.$$
Note that 
\begin{align*}
                v_1 =& (p^4+p^3+p^2+p+1,0,0,0,p^4)=[p^3+p^2+p+1,-1,-p,-p^2,p^3].
	\end{align*}

\begin{theorem}\label{rk5gen} $\F[\F^6]^{\GL_5(\F_p)\times \F^\times}$ is minimally generated as an $\F$-algebra by $d_{5,5}t$ and
$$\{d_{1,4}^{a_1} d_{2,4}^{a_2} d_{3,4}^{a_3}d_{4,4}^{a_4}t^{a_5}\mid (a_1,a_2,a_3,a_4,a_5)\in \mathcal S \}.$$
\end{theorem}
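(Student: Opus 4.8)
The plan is to prove Theorem~\ref{rk5gen} in exactly the same spirit as Theorem~\ref{rk4gen}: identify the primitive non-negative integer solutions of Equation~\ref{rk5eqn} and check that they are precisely the exponent sequences listed in $\mathcal S$ (together with the trivial generator $d_{5,5}t$, corresponding to the relation $d_{5,5}t=1$). Since the $\F^\times$-action is diagonal the ring of invariants is spanned by monomials, so by Proposition~\ref{sep_prop} it suffices to work entirely with the monoid of solutions to Equation~\ref{rk5eqn}, and the generators of the ring correspond bijectively to the primitive elements of this monoid. I would first record the change of basis $a_5=p\widetilde a_5$, $a_4=p\widetilde a_4+\widetilde a_5$, $a_3=p\widetilde a_3-\widetilde a_4$, $a_2=p\widetilde a_2-\widetilde a_3$, $a_1=p\widetilde a_1-\widetilde a_2$, which (as stated in the text) turns Equation~\ref{rk5eqn} into the single linear relation $\widetilde a_1+\widetilde a_2+\widetilde a_3+\widetilde a_4=\widetilde a_5$ with $\widetilde a_1$ the height, and turns the constraints $a_i\ge 0$, $a_5>0$ into $p\widetilde a_1\ge\widetilde a_2$, $p\widetilde a_2\ge\widetilde a_3$, $p\widetilde a_3\ge\widetilde a_4$, $p\widetilde a_4\ge -\widetilde a_5$, $\widetilde a_5>0$.

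Next I would organise the argument as an ``elimination of variables'' casework, mirroring the proofs of Theorem~\ref{prime_sep} and Theorem~\ref{rk4gen}. The top-level split is on which of $a_1,a_4,a_3,a_2$ vanish, handled in a fixed order. When $a_4=0$: here $p^2$ divides $a_5$, $p+1$ divides $a_1$, and Equation~\ref{rk5eqn} degenerates; solutions with $a_3=a_4=0$ give the segment $\mathcal L_2$ and $v_1$, $v_{12}$, $w_{21}$; solutions with $a_4=0$, $a_3>0$ give $\Delta_3$, $\mathcal L_1$ and the relevant extremal points $v_2$, $v_3$, $v_{13}$, $v_{32}$. When $a_4>0$ one uses the reduction $a_5=p\widetilde a_5$ etc.\ and treats height-one solutions (which, as in the rank-four proof, fill out the tetrahedra $\mathcal T_1$, $\mathcal T_2$ and the triangles $\Delta_1$, $\Delta_2$, since each has all its interior/boundary lattice points of height one and these are automatically primitive) separately from height $\ge 2$ solutions. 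For the latter one argues by contradiction: if $Q$ is primitive of height $\ge 2$ with $a_4>0$, then whenever some coordinate is large enough one subtracts an appropriate generator already on the list (typically $v_{14}$, $v_{23}$, $v_{34}$, $w_{43}$, $w_{34}$, $w_{24}$ or $v_4$) and checks the difference is still non-negative, eventually forcing a genuine contradiction by pushing $\widetilde a_5$ (hence $a_4$) past the coefficient $p^3+p^2+p+1$ so that $Q-v_4$ is non-negative. Finally, to get that the generating set is \emph{minimal}, one observes that none of the listed exponent sequences is a non-negative integer combination of the others: since all but $v_1$ have height one they cannot be written as sums of elements of height $\ge 1$, and $v_1$ has support $\{d_{1,4},d_{5,5}^{-1}\}$ so it is the unique primitive invariant of that support and is not a combination of the height-one generators (whose supports all involve some $d_{j,4}$ with $j\ge 2$ or whose exponents on $d_{1,4}$ are too small).

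The main obstacle, by far, is the sheer combinatorial bookkeeping of the height $\ge 2$ casework in the regime $a_4>0$: with four non-trivial coordinates and several ``pivot'' generators available, one must be careful to order the reduction steps so that the sequence of subtractions is forced and terminates, and to verify at each branch that the remaining inequalities among the $\widetilde a_i$ really do corner the solution. This is the analogue of the last paragraph of the proof of Theorem~\ref{rk4gen}, but with one extra dimension it branches considerably more; getting the polyhedral descriptions $\mathcal T_1,\mathcal T_2,\Delta_1,\Delta_2,\Delta_3,\mathcal L_1,\mathcal L_2$ to be \emph{exactly} the height-one primitive locus (no missing lattice points, no spurious ones) is where the real care is needed. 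I expect the existence half (every listed sequence is a solution, and the height-one ones are primitive) to be a short direct check, and the exhaustiveness half (no other primitives) to occupy most of the write-up.
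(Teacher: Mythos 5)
Your overall strategy coincides with the paper's: pass to the monoid of non-negative solutions of Equation~\ref{rk5eqn}, use the substitution $a_5=p\widetilde{a}_5$, $a_4=p\widetilde{a}_4+\widetilde{a}_5$, \dots, $a_1=p\widetilde{a}_1-\widetilde{a}_2$ so that $\widetilde{a}_1$ records the height, classify the height-one solutions, and then corner any putative extra primitive solution by subtracting listed generators. The identification of the height-one locus as exactly $\mathcal T_1\cup\mathcal T_2\cup\Delta_1$ and the disposal of the regime $\widetilde{a}_1>1$, $\widetilde{a}_4\ge 0$ are carried out in the paper essentially as you describe.

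There is, however, a genuine error that propagates through your sketch: it is \emph{not} true that all listed generators other than $v_1$ have height one. Only $\mathcal T_1\cup\mathcal T_2\cup\Delta_1$ is the height-one locus. The triangle $\Delta_2$ consists of the points $[i,j,p+1-(i+j),-1,p]$, whose heights $i$ range up to $p$; the points of $\Delta_3$ and of $\mathcal L_2$ have heights ranging up to $p^2+p+1$; and every point of $\mathcal L_1$ has height $p+1$. This has two consequences. First, your minimality argument (``all but $v_1$ have height one, hence cannot be sums'') collapses; primitivity of the elements of $\Delta_2$, $\Delta_3$, $\mathcal L_1$, $\mathcal L_2$ is a separate verification, not a consequence of having height one. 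Second, and more seriously, your exhaustiveness plan underestimates where the work lies: the endgame of ``pushing $\widetilde{a}_5$ past $p^3+p^2+p+1$ so that $Q-v_4$ is non-negative'' only handles $\widetilde{a}_4\ge 0$ (the paper's Lemma~\ref{wta4nn-lemma}). The hard part is $\widetilde{a}_1>1$ with $\widetilde{a}_4<0$, which the paper must split into four further sub-cases according to the vanishing of $a_3$ and $a_4$, and which requires precisely the higher-height generators in $\Delta_2$, $\Delta_3$, $\mathcal L_1$, $\mathcal L_2$ and $v_1$ as pivots; a reduction scheme that assumes all non-$v_1$ generators have height one cannot close these cases. (A smaller slip: $\Delta_3$ consists of solutions with $a_3=0$ and $a_4$ possibly positive, not of solutions with $a_4=0$ and $a_3>0$.)
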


The remainder of this section is dedicated to the proof of Theorem~\ref{rk5gen}

\begin{lemma}\label{tet1-lemma} The elements in $\mathcal T_1$ are precisely the solutions with height one and $\xx4\ge 0$.
\end{lemma}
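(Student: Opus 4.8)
The plan is to work in the alternative coordinates $[\xx1,\xx2,\xx3,\xx4,\xx5]$ introduced above Theorem~\ref{rk5gen}, in which a non-negative integer solution of Equation~\ref{rk5eqn} is exactly an integer point satisfying $\xx1+\xx2+\xx3+\xx4=\xx5$ together with $p\xx1\ge\xx2$, $p\xx2\ge\xx3$, $p\xx3\ge\xx4$, $p\xx4\ge-\xx5$ and $\xx5>0$, and in which the height of the solution equals $\xx1$. First I would specialise to $\xx1=1$ (height one) and impose the hypothesis $\xx4\ge0$. Then $\xx3\ge\xx4/p\ge0$ and $\xx2\ge\xx3/p\ge0$, so $\xx5=1+\xx2+\xx3+\xx4\ge1>0$ and $p\xx4+\xx5>0$ hold automatically; hence the height-one solutions with $\xx4\ge0$ are precisely the integer points $[1,\xx2,\xx3,\xx4,1+\xx2+\xx3+\xx4]$ subject to
\[0\le\xx4\le p\xx3,\qquad \xx3\le p\xx2,\qquad \xx2\le p.\]

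The second step is to recognise this list of inequalities as the facet description of $\mathcal T_1$. All four vertices $v_{14},w_{24},w_{34},v_4$ have first coordinate $\xx1=1$, and their last coordinate is forced by $\xx5=\xx1+\xx2+\xx3+\xx4$, so it suffices to compare the projections to $(\xx2,\xx3,\xx4)$-space, where the vertices are $(0,0,0)$, $(p,0,0)$, $(p,p^2,0)$ and $(p,p^2,p^3)$. A point $(\xx2,\xx3,\xx4)$ lies in the convex hull of these four iff $(\xx2,\xx3,\xx4)=\lambda_1(p,0,0)+\lambda_2(p,p^2,0)+\lambda_3(p,p^2,p^3)$ with $\lambda_1,\lambda_2,\lambda_3\ge0$ and $\lambda_1+\lambda_2+\lambda_3\le1$; solving for the barycentric coordinates gives $\lambda_3=\xx4/p^3$, $\lambda_2=\xx3/p^2-\xx4/p^3$, $\lambda_1=\xx2/p-\xx3/p^2$, with $\lambda_1+\lambda_2+\lambda_3=\xx2/p$, so the conditions $\lambda_3\ge0$, $\lambda_2\ge0$, $\lambda_1\ge0$, $\lambda_1+\lambda_2+\lambda_3\le1$ are respectively $\xx4\ge0$, $p\xx3\ge\xx4$, $p\xx2\ge\xx3$ and $\xx2\le p$. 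Since these are exactly the inequalities from the first step, an integer point is a height-one solution with $\xx4\ge0$ if and only if it is a lattice point of $\mathcal T_1$, which is the assertion.

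The argument is essentially a single barycentric-coordinate computation together with the affine independence of the four vertices (their edge-vector determinant from $(0,0,0)$ is $p^6\neq0$), so I do not expect a genuine obstacle. The only place demanding care is the first step: one must check that the two constraints involving $\xx5$ become redundant once $\xx1=1$ and $\xx4\ge0$, and that $\xx2$ and $\xx3$ are then automatically non-negative, so that the four surviving inequalities really do cut out the same set of lattice points as the tetrahedron $\mathcal T_1$.
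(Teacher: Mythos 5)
Your proof is correct and takes essentially the same route as the paper's: the paper also works in the $[\xx1,\xx2,\xx3,\xx4,\xx5]$ coordinates and describes the lattice points of $\mathcal T_1$ by the barycentric parametrisation $u_{ijk}=\frac{i}{p}v_{14}+\frac{j}{p^2}w_{24}+\frac{k}{p^3}w_{34}+(1-\frac{i}{p}-\frac{j}{p^2}-\frac{k}{p^3})v_4$, whose coefficients are precisely your $\lambda$'s, arriving at the same system $0\le\xx4\le p\xx3$, $\xx3\le p\xx2$, $\xx2\le p$ for the height-one solutions with $\xx4\ge 0$. Your explicit check that the two constraints involving $\xx5$ become redundant matches the paper's observation that $a_4=p\xx4+\xx5>0$ once $\xx4\ge 0$.
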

\begin{proof}
Note that if $\xx4\ge 0$ then $a_4>0$ since $a_4 = p \xx4 + \xx5$.   For $i\in\{0,\ldots,p\}$, $j\in\{0,\ldots,p^2-ip\}$ and
$k\in\{0,\ldots,p^3-ip^2-jp\}$, define
$$u_{ijk}:=\frac{i}{p}v_{14}+\frac{j}{p^2}w_{24}+\frac{k}{p^3}w_{34}+\left(1-\frac{i}{p}-\frac{j}{p^2}-\frac{k}{p^3}\right)v_4.$$
The points $u_{ijk}$ are precisely the lattice points lying in the tetrahedron with vertices $v_{14}, w_{24}, w_{34}, v_4$.
Define $c_{ijk}:= p^4+(1-i)p^3+(1-i-j)p^2+(1-i-j-k)p$. Simplifying gives
$$u_{ijk}=\left(i,j,k,c_{ijk}+(1-i-j-k),c_{ijk}\right).$$
Write $\widetilde{i}:=p-i$,  $\widetilde{j}:=p^2-ip-j$, $\widetilde{k}:=p^3-p^2i-pj-k$. Then
$$u_{ijk}=[1,\widetilde{i},\widetilde{j},\widetilde{k},1+\widetilde{i}+\widetilde{j}+\widetilde{k}]$$
with $p\ge\widetilde{i}\ge 0$, $p\widetilde{i}\ge \widetilde{j}$ and $p\widetilde{j}\ge \widetilde{k}\ge 0$. 
From this it is clear that $\mathcal T_1$ consists of the solutions
with height one and $\xx4\ge 0$.
\end{proof}

\begin{lemma}\label{tet2-lemma} All solutions with height one, $a_4> 0$ and $\xx4<0$ lie in $\mathcal T_2$.
\end{lemma}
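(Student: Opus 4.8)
The strategy is to parametrize the lattice points of the tetrahedron $\mathcal T_2$ explicitly, exactly as was done for $\mathcal T_1$ in Lemma~\ref{tet1-lemma}, and then show that every height-one solution with $a_4>0$ and $\xx4<0$ is realised by one of these parameter values. Working in the $[\xx1,\xx2,\xx3,\xx4,\xx5]$ coordinates, a height-one solution has $\xx1=1$, and the constraints become $p\ge\xx2$, $p\xx2\ge\xx3$, $p\xx3\ge\xx4$, $p\xx4\ge -\xx5=-(1+\xx2+\xx3+\xx4)$, together with the new hypotheses $\xx4<0$ and $a_4=p\xx4+\xx5>0$, i.e. $p\xx4 > -(1+\xx2+\xx3+\xx4)$. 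First I would record the four vertices of $\mathcal T_2$ in both coordinate systems (they are already given: $v_{14}=[1,0,0,0,1]$, $v_{23}=[1,p,0,-1,p]$, $v_{34}=[1,p,p^2,-p,p^2+1]$, $w_{43}=[1,p,p^2,-1,p^2+p]$) and write a general lattice point as a suitable integer combination, say
$$u = \frac{i}{p}\,v_{14} + \frac{j}{p^2}\,\bigl(\text{edge toward }v_{23}\bigr) + \cdots,$$
choosing the barycentric-type parametrisation so that the four integer parameters range over nested intervals of lengths governed by powers of $p$, analogous to the $\widetilde i,\widetilde j,\widetilde k$ of Lemma~\ref{tet1-lemma}.

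The key computational step is to check that this parametrisation sweeps out exactly the lattice points of the tetrahedron and that the image, translated back to the $\xx{}$-coordinates, is precisely $\{\xx1=1,\ \xx4<0,\ a_4>0\}$ intersected with the monoid. The cleanest way is probably to fix $\xx2$ and $\xx3$ (which by $p\ge\xx2$, $p\xx2\ge\xx3$ lie in the same ranges as for $\mathcal T_1$) and then observe that the two remaining inequalities $p\xx3\ge\xx4$ and $p\xx4 > -(1+\xx2+\xx3+\xx4)$ — together with $\xx4<0$ — confine $\xx4$ to a contiguous range of integers, and that the endpoints of this range correspond to faces of the tetrahedron (the face $\xx4<0$ cutting it off from $\mathcal T_1$, and the opposite face coming from $a_4>0$). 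So the lemma reduces to: for each admissible $(\xx2,\xx3)$ the set of admissible $\xx4<0$ is a nonempty interval of integers whose convex hull over all $(\xx2,\xx3)$ is the tetrahedron with the stated vertices.

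I expect the main obstacle to be purely bookkeeping: getting the barycentric parametrisation of $\mathcal T_2$ right so that lattice points of the simplex correspond bijectively to integer parameter tuples in nested ranges (this is what makes the "precisely" in Lemma~\ref{tet1-lemma} work, and the analogue here needs the denominators $p,p^2,p^3$ to appear in the right edges), and then verifying that the inequality $a_4>0$, rewritten as $p\xx4+\xx5>0$, really does correspond to the fourth facet of that tetrahedron rather than slicing it. Since the statement only claims containment ("lie in $\mathcal T_2$"), not equality, it suffices to show every such solution satisfies the barycentric constraints with nonnegative coefficients summing to one; the reverse inclusion (that every point of $\mathcal T_2$ is such a solution) is the easy direction and can be checked on the vertices. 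Once the explicit description of $\mathcal T_2$ is in hand, the proof is a direct inequality chase of the same flavour as the end of the proof of Theorem~\ref{rk4gen}.
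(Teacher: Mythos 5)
Your plan follows the paper's proof essentially verbatim: the paper translates the hypotheses into the bounds $0<\widetilde{a}_2\le p$, $0<\widetilde{a}_3\le p\widetilde{a}_2$, $-p\le\widetilde{a}_4\le -1$ and $\widetilde{a}_2+\widetilde{a}_3+(p+1)\widetilde{a}_4\ge 0$, and then writes down an explicit barycentric combination $t_{ijk}$ of $v_{23},v_{34},w_{43},v_{14}$ equal to $[1,i,j,k,1+i+j+k]$ whose coefficients are non-negative precisely on that range, which is exactly the parametrise-and-match scheme you describe (including your correct identification of $a_4>0$ as the fourth facet). The bookkeeping you defer is the only real content of the argument, but the approach is the same, so this is fine.
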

\begin{proof}
Substituting and simplifying gives $a_1+a_4=(p+1)(\xx1+\xx4)+\xx3$. Since we are assuming $a_4>0$, $\xx4<0$,
and $\xx1=1$, we conclude that $\xx3>0$. Using the basic constraints and the assumption $\xx1=1$, gives 
$p\ge \xx2>0$ and $p^2\ge p\xx2\ge \xx3$.
Since $p\xx4=a_4-\xx5\ge 1-\xx5=1-(\xx1+\xx2+\xx3+\xx4)=-\xx2-\xx3-\xx4$, we have
$\xx2+\xx3\ge-(p+1)\xx4$. Thus $p(p+1)\ge -\xx4(p+1)$ and $-p\le\xx4\le -1$.
Define
\begin{eqnarray*}
t_{ijk}:=\left(\frac{i}{p}-\frac{j}{p^2}\right)v_{23}&-&\left(\frac{k}{p-1}+\frac{i}{p(p-1)}\right)v_{34}\\
&+&\left(\frac{k}{p-1}+\frac{i}{p(p-1)}+\frac{j}{p^2}\right)w_{43}
+\left(1-\frac{i}{p}\right)v_{14}
\end{eqnarray*}
for $p\ge i\ge 0$, $pi\ge j\ge 0$ and $-1\ge k\ge -p$.
 Note that, as long as $i+j+(p+1)k\ge 0$, the coefficients are non-negative and the $t_{ijk}$
are precisely the lattice points lying in the tetrahedron with vertices $v_{23}, v_{34}, w_{43}, v_{14}$.
Thus each $t_{ijk}\in\mathcal T_2$.
Substituting and simplifying gives
\begin{eqnarray*}
t_{ijk}&=&\left(p-i,pi-j,pj-k,1+i+j+k(p+1),p(1+i+j+k)\right)\\
&=&[1,i,j,k,1+i+j+k].
\end{eqnarray*}
Thus every solution with height one, $a_4>0$ and $\xx4<0$ lies in $\mathcal T_2$.
\end{proof}

\begin{lemma}\label{delta1-lemma} The solutions with height one and $a_4=0$
 lie in $\Delta_1$.
\end{lemma}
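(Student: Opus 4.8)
The plan is to follow the template of Lemmas~\ref{tet1-lemma} and~\ref{tet2-lemma}: give an explicit barycentric parametrisation of the lattice points of the triangle $\Delta_1$, and then check that every solution of the stated type is one of them.

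First I would identify the two edges of $\Delta_1$ issuing from $v_{23}$. A direct calculation gives $v_{312}-v_{23}=(p-1)e$ and $v_{32}-v_{23}=(p-1)f$, where $e:=(1,-(p+1),p,0,0)$ and $f:=(0,-(p+1),p^2+p+1,0,p^2)$; both $e$ and $f$ satisfy the homogeneous version of Equation~\ref{rk5eqn} and have height zero. Hence, for $0\le i$, $0\le j$ and $i+j\le p-1$, the point
$$\delta_{ij}:=\frac{i}{p-1}\,v_{312}+\frac{j}{p-1}\,v_{32}+\Bigl(1-\frac{i}{p-1}-\frac{j}{p-1}\Bigr)v_{23}=v_{23}+ie+jf$$
is a lattice point of $\Delta_1$. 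Converting the directions to the $\widetilde a$-coordinates gives $e=[0,-1,1,0,0]$ and $f=[0,0,p+1,-1,p]$, so that $\delta_{ij}=[1,\,p-i,\,i+(p+1)j,\,-1-j,\,p(1+j)]$.

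Now let $[\xx1,\xx2,\xx3,\xx4,\xx5]$ be a solution of height one, so $\xx1=1$, with $a_4=p\xx4+\xx5=0$. Then $\xx5=-p\xx4$, and since $a_5=p\xx5>0$ we get $\xx4\le -1$; set $j:=-1-\xx4\ge 0$ and $i:=p-\xx2$, the inequality $i\ge 0$ being exactly the constraint $a_1\ge 0$, that is $p\xx1\ge\xx2$. Using $\xx5=-p\xx4$ and the defining relation $\xx1+\xx2+\xx3+\xx4=\xx5$ one computes $\xx5=p(1+j)$, $\xx2=p-i$ and $\xx3=i+(p+1)j$, so the solution equals $\delta_{ij}$. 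It remains to see that $i+j\le p-1$, which is equivalent to $\xx2\ge j+1$. For this I would invoke the constraint $a_2\ge 0$, i.e. $p\xx2\ge\xx3$; substituting $\xx3=(p-\xx2)+(p+1)j$ and rearranging gives $(p+1)\xx2\ge p+(p+1)j$, whence $\xx2\ge j+\tfrac{p}{p+1}$, and since $\xx2$ and $j$ are integers this forces $\xx2\ge j+1$. Thus the solution lies in $\Delta_1$.

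The whole argument is elementary linear algebra together with divisibility bookkeeping; the steps demanding attention are the passage between the $(a_i)$- and $\widetilde a$-coordinate systems when pinning down the edge directions $e$ and $f$, and the small integrality step at the end that upgrades $\xx2\ge j+\tfrac{p}{p+1}$ to $\xx2\ge j+1$. I do not expect a genuine obstacle beyond keeping signs and denominators straight.
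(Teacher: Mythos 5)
Your proof is correct and follows essentially the same route as the paper: both parametrise the lattice points of the triangle $\Delta_1$ by (affine) barycentric coordinates based at the vertices $v_{23}$, $v_{312}$, $v_{32}$, and then match an arbitrary height-one solution with $a_4=0$ to one of these points by solving for the parameters and checking the range constraints. Your explicit verification of $i+j\le p-1$ from $a_2\ge 0$ plus integrality is in fact slightly more complete than the paper's treatment of that point.
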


\begin{proof}
Arguing as in the proof of Lemma~\ref{tet2-lemma}, we have $p^2\ge p\xx2\ge \xx3 \ge 0$.
Put $j := a_1=p-\xx2$ and $k:=a_2$.   
Then $k := p \xx2 - \xx3 \leq p \xx2 = p^2 - pj \leq p^2$.
Since $-p\xx4=\xx5-a_4=\xx5=1+\xx2+\xx3+\xx4$, we have $-(p+1)\xx4=\xx2+\xx3+1$.
Thus $-(p+1)\xx4 = \xx2 + (p\xx2-k)+1=(p+1)\xx2+1-k$ and thus $k\equiv 1 \pmod{p+1}$. 
Write $k = i(p+1) + 1$ where $0 \le i \le p-1$.
Define
$$t_{ij}:=\frac{i}{p-1}v_{23}+\frac{j}{p-1}v_{312}+\frac{p-1-i-j}{p-1}v_{32}$$
for $i,j\in\{0,\ldots,p-1\}$ with $i+j\le p-1$.
Substituting and simplifying gives
\begin{eqnarray*}
t_{ij}&=&\left(j,i(p+1)+1,p^3-j(p^2+1)-i(p^2+p+1),0,p^2(p-i-j)\right)\\
&=&[1,p-j,p^2-1-jp-i(p+1),-(p-i-j),p(p-i-j)].
\end{eqnarray*}
Thus the $t_{ij}$ are precisely the lattice points lying in the triangle with vertices $v_{23}, v_{312}, v_{32}$.
Thus each $t_{ij} \in \Delta_1$.
\end{proof}

\begin{lemma}\label{wta4nn-lemma} There are no primitive solutions with $\xx1 >1$ and $\xx4 \ge 0$.
\end{lemma}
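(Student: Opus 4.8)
There are no primitive solutions with $\xx1 > 1$ and $\xx4 \ge 0$.

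I need to show that any non-negative solution to Equation~\ref{rk5eqn} with height $\xx1 > 1$ and $\xx4 \ge 0$ can be written as a sum of two smaller non-negative solutions. The natural strategy, following the pattern of the rank-four argument in the proof of Theorem~\ref{rk4gen} and the rank-three argument, is to subtract off a carefully chosen generator of height one—something from $\mathcal T_1$—and show the difference is again a non-negative solution.

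Let me think about the approach in the transformed coordinates $[\xx1,\xx2,\xx3,\xx4,\xx5]$. We have $\xx1+\xx2+\xx3+\xx4 = \xx5$, the constraints $p\xx1 \ge \xx2$, $p\xx2 \ge \xx3$, $p\xx3 \ge \xx4$, $p\xx4 \ge -\xx5$, $\xx5 > 0$, together with $\xx1 \ge 2$ and $\xx4 \ge 0$ (the latter forcing $a_4 > 0$). I'd want to reduce to subtracting off a generator $u \in \mathcal T_1$; by Lemma~\ref{tet1-lemma} those are exactly the height-one solutions with $\xx4 \ge 0$, parametrized as $[1,\widetilde i,\widetilde j,\widetilde k,1+\widetilde i+\widetilde j+\widetilde k]$ with $p \ge \widetilde i \ge 0$, $p\widetilde i \ge \widetilde j$, $p\widetilde j \ge \widetilde k \ge 0$. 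Subtracting such a $u$ from $Q = [\xx1,\xx2,\xx3,\xx4,\xx5]$ gives $[\xx1 - 1, \xx2 - \widetilde i, \xx3 - \widetilde j, \xx4 - \widetilde k, \xx5 - 1 - \widetilde i - \widetilde j - \widetilde k]$, which automatically satisfies the defining linear equation; I must choose $\widetilde i, \widetilde j, \widetilde k$ so that the transformed non-negativity constraints on $Q - u$ hold, i.e. $p(\xx1 - 1) \ge \xx2 - \widetilde i$, $p(\xx2 - \widetilde i) \ge \xx3 - \widetilde j$, $p(\xx3 - \widetilde j) \ge \xx4 - \widetilde k$, $p(\xx4 - \widetilde k) \ge -(\xx5 - 1 - \widetilde i - \widetilde j - \widetilde k)$, and $\xx5 - 1 - \widetilde i - \widetilde j - \widetilde k > 0$. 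The plan is to make the greedy choice: take $\widetilde i := \min(\xx2, p)$, then $\widetilde j := \min(\xx3, p\widetilde i)$, then $\widetilde k := \min(\xx4, p\widetilde j)$, and verify these lie in the allowed ranges and make $Q - u$ non-negative.

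The main work—and the likely obstacle—is checking the last chain of inequalities, especially $p(\xx4 - \widetilde k) \ge -(\xx5 - 1 - \widetilde i - \widetilde j - \widetilde k)$, i.e. that $Q - u$ still satisfies $p\xx4' \ge -\xx5'$ with $\xx5' > 0$; this is where the hypothesis $\xx1 \ge 2$ must be used to guarantee enough ``room.'' I'd split into cases according to whether each min is achieved by the first or second argument. If $\widetilde i = \xx2$ (so $\xx2 \le p$) then $\xx2' = 0$, forcing $\xx3' \le 0$ hence $\xx3 \le \widetilde j$, so $\widetilde j = \xx3$, $\xx3' = 0$, and similarly cascading down to $\xx4' = 0$, leaving $\xx5' = \xx1 - 1 \ge 1 > 0$ and all constraints trivially satisfied. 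If instead $\widetilde i = p < \xx2$ (using $\xx1 \ge 2$, so $p\xx1 \ge \xx2$ is consistent with $\xx2 > p$), then $\xx2' = \xx2 - p \ge 0$ and $p(\xx1 - 1) = p\xx1 - p \ge \xx2 - p = \xx2'$, and one continues down the chain. In the worst case all three mins take the value $p, p^2, p^3$ respectively, $u = v_4$, and the required inequality $p(\xx4 - p^3) \ge -(\xx5 - 1 - p - p^2 - p^3)$ becomes, after substitution, a statement equivalent to $\xx1 \ge 2$ or a similarly mild bound coming from $\xx5 = \xx1 + \xx2 + \xx3 + \xx4 \ge 2 + p + p^2 + p^3$; I expect this to go through but it is the delicate point, and it may require subdividing further on whether $\xx2 = p$ exactly, $\xx3 = p\widetilde i$ exactly, etc. Once all cases yield a non-negative $Q - u$ with positive fifth coordinate, $Q = u + (Q - u)$ exhibits $Q$ as a sum of two strictly smaller non-negative solutions, so $Q$ is not primitive, completing the proof.
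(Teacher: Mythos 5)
Your proposal is correct and is essentially the paper's argument: the paper also kills such a $Q$ by subtracting a height-one element of $\mathcal T_1$ (it first reduces with $v_4$, $w_{34}$, $w_{24}$ and then subtracts the $u_{ijk}$ with $\widetilde{i}=\xx2$, $\widetilde{j}=\xx3$, $\widetilde{k}=\xx4$, which is exactly what your greedy minima produce after those reductions). The step you flag as delicate is in fact automatic and needs no further subdivision: since $\widetilde{k}\le\xx4$ you always have $\xx4-\widetilde{k}\ge 0$, every coordinate of $Q-u$ in the bracket notation is nonnegative except possibly the last, which therefore satisfies $\xx5-1-\widetilde{i}-\widetilde{j}-\widetilde{k}\ge \xx1-1\ge 1>0$, so $p(\xx4-\widetilde{k})\ge 0$ beats the (negative) right-hand side, and each chain inequality $p(\xx2-\widetilde{i})\ge \xx3-\widetilde{j}$, etc., holds because the relevant minimum is either achieved at the first argument (making the right side zero) or at the second (so the original constraint on $Q$ carries over after cancelling $p\widetilde{i}$, resp.\ $p\widetilde{j}$).
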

\begin{proof}
Suppose, by way of contradiction, that $Q=(a_1,a_2,a_3,a_4,a_5)$ is a primitive solution with $\xx1>1$
and $\xx4\ge 0$. Suppose $\xx4\ge p^3$. Then, using the basic constraints, $\xx3\ge p^2$ and $\xx2\ge p$.
Thus $Q-v_4$ is non-negative. Thus we may assume $\xx3<p^3$. Similarly, using $w_{34}$, we may assume 
$\xx3<p^2$ and using $w_{24}$ we may assume $\xx2<p$. Consider the primitive solutions $u_{ijk}$ defined 
in the proof of Lemma~\ref{tet1-lemma}. Taking $i=p-\xx2$, $j=p^2-ip-\xx3$ and $k=p^3-p^2i-pj-\xx4$,
gives a primitive solution with $Q-u_{ijk}$ non-negative.
\end{proof}

  The solutions in $\mathcal T_1 \cup \mathcal T_2 \cup \Delta_1$ all have height 1 and so are all primitive.  
  In the next four remarks we describe the remaining elements of $S$.  In each case it is not hard to see that 
  each of the elements corresponds to a primitive solution.

\begin{remark}\label{delta2-remark} 
The solutions in $\Delta_2$
satisfy $\xx5=p$, $\xx4=-1$ and $a_4=0$. Define
$$s_{ij}:=\frac{i-1}{p-1}v_{23}+\frac{j-1}{p-1}v_{123}+\frac{p+1-(i+j)}{p-1}v_{312}$$
for $i,j\in\{1,\ldots,p\}$ and $i+j\le p+1$. Then
\begin{eqnarray*}
s_{ij}&=&
\left(pi-j,(j-1)(p+1)+i,p\left(p+1-(i+j)\right)+1,0,p^2\right)\\
&=&[i,j,p+1-(i+j),-1,p]
\end{eqnarray*}
lies in $\Delta_2$.
\end{remark}

\begin{remark}\label{delta3-remark} 
The solutions in $\Delta_3$ satisfy $a_3=0$, $a_4 \leq p$ and $\xx4 \leq -p$.
Define $$u_{ij}:=\frac{i}{p}w_{21}+\frac{j}{p}w_{24}+\frac{p-i-j}{p}v_2$$ for
$i,j\in\{0,\ldots,p\}$ with $i+j\le p$. Substituting and simplifying gives
\begin{eqnarray*}
u_{ij}&=&(i,p^4+p^2+(p^3+1)(1-(i+j))+p(1-i),0,j,p^4+p^3(1-(i+j))+pj)\\ 
  &=&[p^2+p+1,p^3+p^2+p,-(p+1),-(p^2+p),p^3+p^2]\\
  &+&(i+j)[-p,-p^2,1,p,-p^2]+[0,-i,0,0,j].
\end{eqnarray*}
Hence $u_{ij}\in\Delta_3$.
\end{remark}

\begin{remark}\label{l1rmk} Since 
both $v_3$ and $v_{13}$ satisfy
$a_2=0$, $a_4=0$ and $\xx4<0$
we see that the all solutions in $\mathcal L_1$ also satisfy
$a_2=0$, $a_4=0$ and $\xx4<0$. The lattice points on $\mathcal L_1$ are precisely the points
$u_i:=\left(iv_{13} +(p^2+p-i)v_3\right)/(p^2+p)$. 
Simplifying gives
\begin{eqnarray*}
u_{i}&=&(i,0,(p^2+p-i)(p^2+1)+1,0,(p^2+p+1-i)p^2)\\ 
  &=&[p+1,p^2+p-i,p^3+p^2-ip,-p^2-p-1+i,p^3+p^2+p-ip]
\end{eqnarray*}
%
for $i\in\{0,\ldots, p^2+p\}$.  
\end{remark}

\begin{remark}\label{l2rmk} Since 
both $v_{12}$ and $w_{21}$ satisfy $a_3=0$, $a_4=0$, $\xx4 =-p$ and $\xx5 =p^2$,
we see that all the solutions in $\mathcal L_2$ also satisfy $a_3=0$, $a_4=0$, $\xx4 =-p$ and $\xx5 =p^2$.
These solutions are precisely the lattice points given by
$$(p^3+p^2+p-j(p+1),1+jp,0,0,p^3)= [p^2+p+1-j,j,-1,-p, p^2]$$
for $j\in\{0,\ldots,p^2\}$.
\end{remark}

It remains to prove that $\F[\F^6]^{\GL_5(\F_p)\times \F^\times}$  
is generated by $d_{5,5}t$ and the monomial invariants whose exponent sequences lie in $\mathcal S$.
Suppose, by way of contradiction, that $Q=(a_1,a_2,a_3,a_4,a_5)$ is a primitive solution with $Q\not\in\mathcal S$.
Using Lemmas \ref{tet1-lemma},  \ref{tet2-lemma}, \ref{delta1-lemma} and \ref{wta4nn-lemma}, we may assume
$\xx1 >1$ and $\xx4 <0$.

We are left with four cases: 
\begin{itemize}
\item[(i)] $a_3=0$ and $a_4=0$;
\item[(ii)] $a_3=0$ and $a_4>0$;
\item[(iii)] $a_3>0$ and $a_4=0$;
\item[(iv)] $a_3>0$ and $a_4>0$.
\end{itemize}

\subsection*{Case (i)}
Suppose $a_3=0$, $a_4=0$, $\xx4<0$ and $\xx1>1$. If $a_2=0$, then $Q$ is a multiple of $v_1$, so we may also assume
$a_2>0$. Similarly, using $v_2$, we can assume $a_1>0$.
Since $a_3=a_4=0$, we have $a_5=p\xx5=-p^2\xx4=-p^3\xx3$. Thus $\xx3<0$ and Equation~\ref{rk5eqn}
becomes $a_1p+a_2(p+1)=-\xx3(p^4+p^3+p^2+p+1)$. 
Using $w_{21}=(p,p^3+1,0,0,p^3)$, we can assume that either $a_1<p$ or $a_2\le p^3$.

Suppose $a_1<p$. Write $a_1=p-j$ (so $j\in\{1,\ldots,p-1\}$).
Then $a_2(p+1)=(j-p)p-\xx3(p^4+p^3+p^2+p+1)$. 
Note that $j+1\equiv -\xx3 \pmod{p+1}$. Write $-\xx3=j+1+k(p+1)$ (so $k\ge 0$).
Thus $$a_2(p+1)\ge (j-p)p+(j+1)(p^4+p^3+p^2+p+1)=(p+1)((j+1)(p^3+p)+j-p+1).$$
Hence $a_2\ge (j+1)(p^3+p)+j-p+1=j(p^3+p+1)+p^3+1$.
Within $\Delta_3$, there are solutions on the line segment joining $v_2$ to $w_{21}$ of the form
$u_j=(p-j,p^3+1+j(p^3+p+1),0,0,p^3+jp^3))$ for $j\in\{0,\ldots,p\}$.
Therefore $Q-u_j$ is non-negative if $a_1 < p$.  

Suppose then that $p^3\ge a_2$. Define $k\in\{1,\ldots,p\}$ so that
$k\equiv a_2 \equiv -\xx3 \pmod p$. Note that $a_2-k=jp$ for $j\in\{0,\ldots,p^2-1\}$
Then $a_1p \ge -(p+1)a_2+k(p^4+p^3+p^2+p+1)=-pa_2 +k(p^4+p^3+p^2+p)-jp$.
Hence 
$$a_1\ge k(p^3+p^2+p+1)-a_2-j=k(p^3+p^2+p)-j(p+1).$$ The family $\mathcal L_2$ includes a solution
$u_j=(p^3+p^2+p-j(p+1),1+jp,0,0,p^3)$
for $j\in\{0,\ldots,p^2\}$. Therefore $Q-u_j$ is non-negative, giving the required contradiction.

\subsection*{Case (ii)}
Suppose $a_3=0$, $a_4>0$, $\xx4<0$ and $\xx1>1$. Since $a_3=0$, 
we have $\xx4=p\xx3$. 
Thus $\xx3<0$ and $p$ divides $\xx4$.
As in the proof of Lemma~\ref{tet2-lemma}, substituting and simplifying gives
$a_1+a_4=(p+1)(\xx1+\xx4)+\xx3$. Since $a_4>0$ and $\xx3<0$, we have $\xx1+\xx4>0$.
Using $v_{14}=(p,0,0,1,p)$, we may assume $a_1<p$. Substituting and simplifying 
gives $a_2=p^2\xx1-pa_1-\xx3$. Since $\xx1>-\xx4=-p\xx3$, we have
$a_2\ge p^2(1-p\xx3)-\xx3-pa_1=(-\xx3)(p^3+1)+p^2-pa_1$.

Suppose $a_1+a_4\ge p$.
Let $v$ denote the solution $u_{ij}\in\Delta_3$, as  defined in Remark~\ref{delta3-remark},
with $i=a_1$ and $j=p-i$. 
Then
$v=(i,p^3+p^2+1-ip,0,p-i,p^3+p(p-i))$. Since $-\xx3\ge 1$, we have $a_2\ge p^3+p^2+1-ip$.
Thus $Q-v$ is non-negative.

Suppose $a_1+a_4<p$. Note that $a_1+a_4-\xx3$ is a positive multiple of $p+1$.
Thus $-\xx3\ge p+1-(a_1+a_4)$ and 
$a_2\ge (p^3+1)(p+1-(a_1+a_4))+p^2-pa_1=p^4+p^3+p^2+p+1-(p^3+1)(a_1+a_4)-pa_1$.
Thus $Q-v$ is non-negative where $v$ is the solution $u_{ij}\in\Delta_3$ 
with $i=a_1$ and $j=a_4$.

\subsection*{Case (iii)} 
Suppose $a_3>0$, $a_4=0$, $\xx4<0$ and $\xx1>1$.
Since $a_4=0$, we have $a_5=-p^2\xx4$. Using $v_{13}$, we may assume
$a_1<p^2+p$ and using $v_{23}$ we may assume $a_2<p^2$.
Note that $(p+1)(\xx2+\xx4)+\xx1=a_2+\xx1+\xx2+\xx3+(p+1)\xx4=a_2$.
Thus $(p+1)(\xx2+\xx4)<a_2<p^2$ and $\xx4+\xx2< p-1$.

First we further suppose $a_2=0$. Equation~\ref{rk5eqn} becomes
$a_1p^4+a_3p^2(p^2+p+1)=a_5(p^4+p^3+p^2+p+1)$. Thus $a_1+a_3\equiv a_5 \pmod{p+1}$.
Write $a_1+a_3-a_5=k(p+1)$ with $k>0$. Then the height of the solution is
$\xx1=a_1+a_2+a_3+a_4-a_5=k(p+1)$. Hence $\xx2=p\xx1-a_1=kp(p+1)-a_1$.
Using $a_2=0$ gives $\xx3=p\xx2=kp^2(p+1)-a_1p$. We have two expressions for $a_3$:
$$a_3=k(p+1)-a_1+a_5=k(p+1)-a_1-p^2\xx4$$
and
$$a_3=p\xx3-\xx4=p^2\xx2-\xx4=p^3k(p+1)-p^2a_1-\xx4.$$
Equating these expressions and simplifying gives
$a_1-\xx4=k(p^2+p+1)$. Using this to eliminate $\xx4$ from the second expression for $a_3$
gives $a_3=p^3k(p+1)-p^2a_1+k(p^2+p+1)-a_1$. Let $v$ denote the solution $u_i\in\mathcal L_1$,
as defined in Remark~\ref{l1rmk}, with $i=a_1$. Since $k>0$, we see that $Q-v$ is non-negative.
Thus we may assume $a_2>0$. Furthermore, using $v_{32}$, we may assume $a_3<p^3$.

Suppose $\xx3<0$. Then $p^2>a_2=p\xx2-\xx3>p\xx2$. Hence $\xx2<p$.
Since $1\le a_3=p\xx3-\xx4$, we have $-\xx4\ge p+1$.
Thus $\xx1=\xx5-(\xx2+\xx3+\xx4)=(-\xx4)(p+1)-\xx2-\xx3\ge (p+1)^2+1-\xx2> p^2+p+2$.
Hence $a_1=p\xx1-\xx2>p^3+p^2+p$, contradicting our assumption that $a_1<p^2+p$. 
Thus we must have $\xx3\ge 0$. Furthermore, since $p\xx2\ge\xx3$, we get $\xx2\ge 0$.

Note that $\xx1+\xx2+\xx3=\xx5-\xx4=(-\xx4)(p+1)$. Thus if $\xx4=-1$, we have
$\xx3=p+1-(\xx1+\xx2)\ge 0$ and $Q=s_{ij}$ for $i=\xx1$ and $j=\xx2$, with 
$s_{ij}\in\Delta_2$ as defined in Remark~\ref{delta2-remark}. Thus we must have 
$\xx4 \leq -2$. 

Suppose $a_1=0$. Then $\xx2=p\xx1\ge 2p$ and $a_2\ge 2p^2-\xx3$.
Thus $\xx3>p^2$ since $a_2 < p^2$.
Combining $\xx4 - \xx5 =(p+1)\xx4$ with $(p+1)\xx1 = a_1 + \xx1 + \xx2$  yields $a_1=(p+1)(\xx1+\xx4)+\xx3$.
Thus $\xx3$ is divisible by $p+1$. Since $\xx3>p^2$, we have $\xx3\ge p(p+1)$.
Hence $a_3=p\xx3-\xx4\ge p^3+p^2+2$,
contradicting the assumption that $a_3<p$. Thus we must have $a_1>0$.

Suppose $a_1<p$. Let $u$ denote the solution $s_{ij}\in \Delta_2$ with $i=1$ and $j=p-a_1$.
Then $u=(a_1,p^2-a_1(p+1),pa_1+1,0,p^2)$. Thus either $a_2<p^2-a_1(p+1)$ or $a_3\le pa_1$.
First suppose $a_2<p^2-a_1(p+1)$. Then $pa_1+a_2\le p^2-a_1-1\le p^2-1$, which implies
$p^2-1\ge p^2\xx1-\xx3\ge 2p^2-\xx3$. Thus $\xx3\ge p^2+1$ and $a_3>p^3+p+2$, contradicting
the assumption $a_3<p^3$. Suppose $a_3\le pa_1$. Then $p\xx3-\xx4\le p^2-p$, giving 
$p\xx3\le p^2-p+\xx4\le p^2-p-2$. Thus $\xx3\le p-2$. Hence $a_2=p\xx2-\xx3\ge p\xx2-p+2$.
Since $a_2<p^2$, this forces $\xx2\le p$. Thus $a_1=p\xx1-\xx2\ge p\xx1-p\ge p$, giving the required contradiction.
Thus we must have $a_1\ge p$.

If $a_1 \ge p^2-1$ then using the solution $v_{123}=(p^2-1,p,1,0,p^2)$ we see that $a_2 < p$.
Next using the solutions $(ip-1,i,p^2-ip+1,0,p^2)$ from $\Delta_2$
with $i=a_2$ we have $a_3 \leq p^2-a_2p$.
This implies that $p^2 \geq p a_2 + a_3 = p^2 \xx2 - \xx4$ and thus $\xx2 =0$.  
From $\xx3 \geq 0$ and $ a_2 = p\xx2 - \xx3$ this yields  
$a_2=0$, giving a contradiction.  
Thus we must have $a_1 \leq p^2-2$ and so $p \leq a_1 \leq p^2-2$.  

The solution $v_{312}=(p-1,1,p^2-p+1,0,p^2)$ shows that $a_3 \leq p^2-p$, from which we derive that $\xx3 \leq p-2$.
If $a_2 < p$ then $p-1 \geq a_2 = p\xx2 - \xx3 \geq p\xx2 -p+2$ which yields $\xx2 \leq 1$.
Hence $\xx1 = -\xx4(p+1) - \xx2 -\xx3 \geq 2(p+1) - 1 - (p-2) = p +3$ and thus $a_1 \geq p(p+3)-1 = p^2 + 3p -1$.
Since $a_1 \leq p^2-2$ we have a contradiction and therefore $a_2 \geq p$.

Again consider the solutions $(ip-1,i,p^2-ip+1,0,p^2)$ from $\Delta_2$. 
Choose $i$ so that
$ip -1 \leq a_1 \leq ip+p-2$.  Since $a_2\ge p> i$, we have 
$a_3 \leq p^2 -ip \leq p^2 - a_1 + p - 2$ which implies $a_1 + a_3 \leq p^2 + p -2$.
Thus $p\xx1 - \xx2 + p\xx3 - \xx4 \leq p^2+p-2$ and
 $p(\xx1 + \xx3) \leq p^2 + p -2 + \xx2 + \xx4 < p^2+2p-3$ since $\xx2+\xx4 < p-1$.
 This yields $\xx1 + \xx3 \leq p+1$.   
 Therefore $-\xx4(p+1) = \xx1 + \xx2 + \xx3 \leq (p+1) + (p-2-\xx4)$ which implies $-\xx4 p \leq 2p-1$ and thus
 $\xx4 \geq -1$.  However $\xx4 \leq -2$, giving the required contradiction.

\subsection*{Case (iv)} 
Suppose $a_3>0$, $a_4>0$, $\xx4<0$ and $\xx1>1$.
Using $v_{14}=(p,0,0,1,p)$, we may assume $a_1<p$.
Using $v_{23}=(0,p^2,1,0,p^2)$, we may assume $a_2<p^2$.
Thus $\xx2=p\xx1-a_1>p$. Write $a_2=\alpha(p+1)+\beta$
with $0\le \alpha<p$ and $0\le \beta \le p$.
Let $u$ denote the solution $t_{ijk}\in\mathcal T_2$, as defined in the proof of
Lemma~\ref{tet2-lemma}, with $i=p$, $k=\alpha-p$ and $j=p^2-\alpha(p+1)$.
Then $u=(0,\alpha(p+1), p^3+p-\alpha(p^2+p+1),1,p^3+p-\alpha p^2)$. 
Thus $Q-u$ is non-negative if $a_3\ge  p^3+p-\alpha(p^2+p+1)$. Hence we may assume
$a_3< p^3+p-\alpha(p^2+p+1)\le p^3+p-p\alpha(p+1)\le p^3+p-p(a_2-\beta)\le p^3+p^2+p-pa_2.$
Therefore $p^3+p^2+p>a_3+pa_2=p^2\xx2-\xx4> p^2\xx2$. From this we get $p+1\ge \xx2$.
Since we also have $\xx2>p$, we conclude that $\xx2=p+1$. 

Since $\xx1 > 1$ we have $a_1=p\xx1-\xx2 \geq 2p-(p+1)=p-1$.  But $a_1 <p$ and thus $a_1=p-1$.
Furthermore, $p^2>a_2=p\xx2-\xx3=p(p+1)-\xx3$ gives $\xx3 \geq p+1$. Thus $a_3=p\xx3-\xx4>p^2+p$.
Let $v$ denote the solution $t_{ijk}\in \mathcal T_2$ with $i=1$, $j=p$ and $k=-1$. Then
$v=(p-1,0,p^2+1,1,p^2+p)$ and $Q-v$ is non-negative, giving the required contradiction.  

\section{Partitioning Rank 5 Subgroups}\label{rnk5_par}

Applying Theorems \ref{field-thm} and \ref{codim1}, gives the following.
\begin{theorem} Suppose $E$ is a subgroup of rank $5$. Then
\begin{itemize}
\item[(i)] $\lambda(E)=(5)$ if and only if $d_{i,5}(E)=0$ for $i\in\{1,2,3,4\}$;
\item[(ii)] $\lambda(E)=(4,1)$ if and only if $d_{i,5}(E)=0$ for $i\in\{2,3\}$ and $v_{14}(E)=1$.
\end{itemize}
\end{theorem}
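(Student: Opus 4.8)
The plan is to derive both equivalences directly from Theorem~\ref{field-thm} and Theorem~\ref{codim1}. The statements are essentially corollaries of those two results; the only real content is to rephrase the conditions $\lambda(E)=(5)$ and $\lambda(E)=(4,1)$ in the vector-space and dilation language in which those theorems are stated, and to recognise the invariant $v_{14}$ inside the formula coming out of Theorem~\ref{codim1}.

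For part~(i), since $E$ has rank $5$ the partition $\lambda(E)=(5)$ holds exactly when $E$ is a dilation $\alpha\,\F_{p^5}$ of the field $\F_{p^5}$, and conversely any rank-$5$ subgroup of $\F$ which is an $\F_{p^5}$-vector space is one-dimensional over $\F_{p^5}$, hence such a dilation; thus $\lambda(E)=(5)$ is equivalent to $E$ being an $\F_{p^5}$-vector space. When this holds we have $\F_{p^5}=\alpha^{-1}E\subseteq\F$, so Theorem~\ref{field-thm} applies with $r=s=5$ and $m=1$ and shows $d_{i,5}(E)=0$ for every $i$ not divisible by $5$, i.e.\ for $i\in\{1,2,3,4\}$. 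For the converse I would argue directly, so as not to presuppose $\F_{p^5}\subseteq\F$: if $d_{i,5}(E)=0$ for $i\in\{1,2,3,4\}$ then all intermediate Dickson invariants of $E$ vanish and $F_E(t)=t^{p^5}+d_{5,5}(E)\,t$; rescaling to $\beta^{-1}E$ for some $\beta\in E\setminus\{0\}$ and using that $1$ is then a root forces $d_{5,5}(\beta^{-1}E)=-1$, whence $F_{\beta^{-1}E}(t)=t^{p^5}-t$. Comparing the roots of this polynomial with the elements of $\beta^{-1}E\subseteq\F$ gives $\beta^{-1}E=\F_{p^5}$, so $\F_{p^5}\subseteq\F$ and $\lambda(E)=(5)$.

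For part~(ii), I would apply Theorem~\ref{codim1} with $s=4$, which is legitimate since $4>1$ and $E$ has rank $5=s+1$. That theorem gives $\lambda(E)=(4,1)$ if and only if $d_{i,5}(E)=0$ for $1<i<4$, i.e.\ $i\in\{2,3\}$, together with $\bigl(d_{1,5}^{\,p}d_{4,5}/d_{5,5}^{\,p}\bigr)(E)=1$. It then remains only to identify the monomial $d_{1,5}^{\,p}d_{4,5}/d_{5,5}^{\,p}$ with $v_{14}$: by Definition~\ref{v1} with $r=5$ and $j=4$ the exponent on $d_{1,5}$ is $p(p^{5-4}-1)/(p-1)=p$ and the exponent on $d_{5,5}$ is $p^{5-4}=p$, so $v_{14}=d_{4,5}\,d_{1,5}^{\,p}/d_{5,5}^{\,p}$, which is exactly the expression produced by Theorem~\ref{codim1}.

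I do not expect a genuine obstacle; both parts are short deductions. The two points needing a little care are the a-priori field-membership question in the converse of part~(i), which is handled by the elementary computation of $F_E$ above rather than by quoting Theorem~\ref{field-thm} (whose hypotheses already assume $\F_{p^5}\subseteq\F$), and the purely bookkeeping check that the monomial appearing in Theorem~\ref{codim1} coincides with $v_{14}$ of Definition~\ref{v1}.
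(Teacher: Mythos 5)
Your proposal is correct and matches the paper's approach exactly: the paper proves this theorem simply by citing Theorems~\ref{field-thm} and \ref{codim1}, which is precisely what you do. Your extra care about the a priori containment $\F_{p^5}\subseteq\F$ in the converse of part (i), and the bookkeeping check that the monomial from Theorem~\ref{codim1} is $v_{14}$, are sensible refinements of details the paper leaves implicit.
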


We do not have a complete characterisation of the partitions of the rank $5$ subgroups. However the following approach
has lead to a range of conjectures which have been verified for small primes using Magma \cite{magma}. 

Use $h$ to denote $D_4^{p-1}(x_5)$. Then $d_{i,5}=d_{i,4}^p-d_{i-1,4}h\in A:=\F[d_{1,4},\ldots,d_{4,4},h]$.
Thus $\F[d_{1,5},\ldots,d_{5,5}]$ is a subalgebra of $A$. Using Theorem~\ref{codim1}, to show that a rank $5$ subgroup 
$E={\rm Span}_{\F_p}\{c_1,\ldots,c_5\}$ contains a 
dilation of $\F_{p^3}$, it is sufficient to find $g\in GL_5(\F_p)$ such that $d_{2,4}g(c_1,\ldots,c_5)=0$
and $$\left(d_{1,4}^pd_{3,4}-d_{4,4}^p\right)g(c_1,\ldots,c_5)=0.$$
In other words, we want to consider the union of the varieties determined by 
$\langle d_{2,4}g, \left(d_{1,4}^pd_{3,4}-d_{4,4}^p\right)g\rangle$. 
This is equivalent to the variety determined by the intersection of these ideals.
Computing this intersection is equivalent to computing
$$\langle d_{2,4}, \left(d_{1,4}^pd_{3,4}-d_{4,4}^p\right)\rangle\cap\F[d_{1,5},\ldots,d_{5,5}].$$
This can be done computationally as an elimination of variables. Using this approach, the following has been verified for
primes less than or equal to $13$.

\begin{conjecture} A rank $5$ subgroup $E$ contains a dilation of $\F_{p^3}$ if and only if
$E$ is in the variety determined by $d_{4,5}^p-d_{3,5}d_{1,5}^p$, $d_{5,5}^pd_{1,5}-d_{4,5}d_{2,5}^p$ and
$d_{5,5}^{ip}d_{4,5}^{p-i}-d_{3,5}d_{2,5}^{ip}d_{1,5}^{p-i}$ for $i\in\{1,\ldots,p\}$.
\end{conjecture}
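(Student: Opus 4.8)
\emph{Proof idea.} The plan is to derive the statement from Theorem~\ref{comp-thm} --- which computes the Dickson invariants of $\F_{p^3}\oplus W$ --- together with the fact, recalled in the proof of Theorem~\ref{field-thm}, that the Dickson invariants separate $GL_5(\F_p)$-orbits; throughout one works over $\overline\F$, as in the earlier sections. For the ``only if'' direction I would first note that a rank $5$ subgroup $E$ containing a dilation of $\F_{p^3}$ can be written $E=\alpha(\F_{p^3}\oplus W)$ with $\alpha\in\F^\times$ and $W$ a rank $2$ subgroup meeting $\F_{p^3}$ trivially (take an $\F_p$-complement). Writing $a:=d_{1,2}(\omega_3(W))$ and $b:=d_{2,2}(\omega_3(W))$, Theorem~\ref{comp-thm} with $s=3$, $r=5$, $\ell=2$ gives $(d_{1,5},\ldots,d_{5,5})(\F_{p^3}\oplus W)=(a,b,-1,-a,-b)$; dilating by $\alpha$ multiplies the $i$-th entry by $\alpha^{p^5-p^{5-i}}$. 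Since each relation in the conjecture is isobaric of weight zero for this $\F^\times$-action, it suffices to check it on $(a,b,-1,-a,-b)$, which is a one-line substitution.

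For the ``if'' direction --- the substantive part --- the plan is as follows: given a rank $5$ subgroup $E$ whose Dickson invariants $d_i:=d_{i,5}(E)$ satisfy the relations, produce a subgroup $E'=\alpha(\F_{p^3}\oplus W)$ with the same Dickson invariants, so that $E=E'$ by the separation property and hence $E$ contains the dilation $\alpha\F_{p^3}$. One has $d_5\neq0$ (rank $5$), and the $i=p$ relation $d_5^{p^2}=d_3d_2^{p^2}$ then forces $d_2,d_3\neq0$; choose $\alpha\in\F^\times$ with $\alpha^{p^5-p^2}=-d_3$ (using algebraic closure). Because $p^5-p^2=p^2(p^3-1)$ and the $p^2$-power map on $\F$ is injective, the $i=p$ relation yields $\alpha^{p^3-1}=-d_5/d_2$. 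Then set $a:=d_1\alpha^{p^4-p^5}$ and $b:=-d_5\alpha^{1-p^5}$ (so $b\neq0$), and choose --- again over $\overline\F$, using that $\omega_3:\F\to\F$ and $(d_{1,2},d_{2,2}):\F^2\to\F^2$ are surjective --- a rank $2$ subgroup $W$ with $W\cap\F_{p^3}=\{0\}$ and $(d_{1,2}(\omega_3(W)),d_{2,2}(\omega_3(W)))=(a,b)$. The remaining task is to check that $E'=\alpha(\F_{p^3}\oplus W)$ has $d_{i,5}(E')=d_i$ for every $i$: the entries $i=1,3,5$ hold by the definitions of $a,b,\alpha$, and the entries $i=2$ and $i=4$ come out correctly precisely because $\alpha^{p^3-1}=-d_5/d_2$ and $d_5^pd_1=d_4d_2^p$.

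The step I expect to require the most care is exactly that last verification: confirming that once $\alpha$ has been extracted, the two relations $d_{5,5}^pd_{1,5}=d_{4,5}d_{2,5}^p$ and $d_{5,5}^{p^2}=d_{3,5}d_{2,5}^{p^2}$ force all five $d_{i,5}(E')$ to match. If this goes through as expected it in fact shows that only those two of the listed relations are needed for the set-theoretic claim on the rank $5$ locus, the others holding there automatically by the forward direction; presumably the full list describes the vanishing ideal, in line with the elimination-ideal reformulation preceding the conjecture. One further point worth flagging is that the argument genuinely uses algebraic closure --- the root of $T^{p^5-p^2}+d_3$ and a rank $2$ subgroup realising a prescribed pair $(d_{1,2},d_{2,2})$ need not exist over an arbitrary field --- so the conjecture should be understood over $\overline\F$.
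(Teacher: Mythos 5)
Your outline is essentially correct, and the first thing to say is that the paper does not prove this statement at all: it appears there as a conjecture, supported only by Magma computations of an elimination ideal for primes $p\le 13$. Your route is therefore not merely different from the paper's — it settles the question (in the set-theoretic form in which the conjecture is stated, over $\overline{\F}$). I have checked the steps you flag as needing care and they go through: Theorem~\ref{comp-thm} with $(s,r,\ell)=(3,5,2)$ gives $(d_{1,5},\ldots,d_{5,5})(\F_{p^3}\oplus W)=(a,b,-1,-a,-b)$, every listed relation vanishes on that vector (the sign works out because the exponent $i(p-1)+p$ of $-1$ is odd for odd $p$, and char $2$ is trivial); conversely, with $\alpha^{p^5-p^2}=-d_3$, $a=d_1\alpha^{p^4-p^5}$, $b=-d_5\alpha^{1-p^5}$, the invariants of $E'=\alpha(\F_{p^3}\oplus W)$ match $d_1,d_3,d_5$ by construction, match $d_2$ because injectivity of the $p^2$-power map turns the $i=p$ relation into $\alpha^{p^3-1}=-d_5/d_2$, and match $d_4$ because $\alpha^{p^4-p}=(-d_5/d_2)^p$ combined with $d_{5,5}^pd_{1,5}=d_{4,5}d_{2,5}^p$; separation of $GL_5(\F_p)$-orbits by the Dickson invariants (already invoked in the proof of Theorem~\ref{field-thm}) then forces $E=E'$. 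Three small repairs. First, the relations are isobaric but not of weight zero (e.g.\ $d_{4,5}^p-d_{3,5}d_{1,5}^p$ has weight $p^6-p^2$); what you actually use, and should say, is that the two monomials in each relation have equal weight, so the zero locus is $\F^\times$-stable. Second, the existence of $W$ is cleaner if you take $U$ to be the root space of the additive polynomial $t^{p^2}+at^p+bt$, which is separable precisely because $b\ne0$, and then lift an $\F_p$-basis of $U$ through the surjective map $\omega_3$; this avoids invoking surjectivity of the finite morphism $(d_{1,2},d_{2,2})$. Third, your remark that only the relations $d_{5,5}^{p^2}-d_{3,5}d_{2,5}^{p^2}$ and $d_{5,5}^pd_{1,5}-d_{4,5}d_{2,5}^p$ are needed on the rank-$5$ locus is correct (the others then follow algebraically), but note that the stronger, ideal-theoretic question implicit in the paper's elimination-of-variables framing — whether the listed polynomials generate $\langle d_{2,4},\,d_{1,4}^pd_{3,4}-d_{4,4}^p\rangle\cap\F[d_{1,5},\ldots,d_{5,5}]$ — is not addressed by your argument and remains open. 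Write this up carefully: it upgrades the conjecture to a theorem.
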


If $E$ is in the variety determined by $\langle d_{1,4},d_{3,4} \rangle\cap \F[d_{1,5},\ldots,d_{5,5}]$,
then $E$ contains a rank $4$ subgroup which is a vector space over $\F_{p^2}$.

\begin{conjecture}\label{subspace} 
 $\langle d_{1,4},d_{3,4} \rangle\cap \F[d_{1,5},\ldots,d_{5,5}]=\langle d_{5,5}^p-d_{4,5}d_{1,5}^p,d_{3,5}^p-d_{2,5}d_{1,5}^p\rangle$.
\end{conjecture}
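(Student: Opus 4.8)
The plan is to reduce the conjecture to a kernel computation for a monomial map. Writing $h=D_4^{p-1}(x_5)$, recall from the paragraph preceding the conjecture that $A:=\F[d_{1,4},d_{2,4},d_{3,4},d_{4,4},h]$ is a polynomial ring in five variables and that $\F[d_{1,5},\ldots,d_{5,5}]$ is the subalgebra of $A$ generated by the elements $d_{i,5}=d_{i,4}^p-d_{i-1,4}h$ (with $d_{0,4}=1$, $d_{5,4}=0$); since the Dickson invariants $d_{1,5},\ldots,d_{5,5}$ are algebraically independent, this subalgebra is itself a polynomial ring on them. The crucial observation is that reduction modulo $\langle d_{1,4},d_{3,4}\rangle$ turns every generator into a monomial up to sign:
\[d_{1,5}\equiv -h,\qquad d_{2,5}\equiv d_{2,4}^p,\qquad d_{3,5}\equiv -d_{2,4}h,\qquad d_{4,5}\equiv d_{4,4}^p,\qquad d_{5,5}\equiv -d_{4,4}h.\]
Thus the left-hand side of the conjecture is exactly the kernel of the homomorphism $\phi\colon\F[d_{1,5},\ldots,d_{5,5}]\to A/\langle d_{1,4},d_{3,4}\rangle\cong\F[d_{2,4},d_{4,4},h]$ given by these assignments. (Should one prefer to read $\langle d_{1,4},d_{3,4}\rangle$ as an ideal of $\F[x_1,\ldots,x_5]$, the same identification holds: $\F[x_1,\ldots,x_5]$ is a free, hence faithfully flat, module over $A$, so contracting the ideal to $A$ does not change its intersection with $\F[d_{1,5},\ldots,d_{5,5}]$.)

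With this set-up the inclusion $\supseteq$ is immediate: applying $\phi$ and using $(-1)^p=-1$ in characteristic $p$ gives $\phi(d_{3,5}^p-d_{2,5}d_{1,5}^p)=-d_{2,4}^ph^p+d_{2,4}^ph^p=0$, and likewise $\phi(d_{5,5}^p-d_{4,5}d_{1,5}^p)=0$. For the reverse inclusion, put $f=d_{3,5}^p-d_{2,5}d_{1,5}^p$, $g=d_{5,5}^p-d_{4,5}d_{1,5}^p$, and $J=\langle f,g\rangle$. I would work with the pure lexicographic order on $\F[d_{1,5},\ldots,d_{5,5}]$ in which $d_{3,5}>d_{5,5}>d_{1,5}>d_{2,5}>d_{4,5}$; then the leading monomials of $f$ and $g$ are $d_{3,5}^p$ and $d_{5,5}^p$, which are coprime, so by Buchberger's criterion $\{f,g\}$ is a Gr\"obner basis of $J$ and every polynomial has a normal form modulo $J$ in which each monomial has $d_{3,5}$-degree and $d_{5,5}$-degree strictly less than $p$. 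It now suffices to show that $\phi$ is injective on the $\F$-span of these ``reduced'' monomials: given $q\in\ker\phi$, its normal form differs from $q$ by an element of $J\subseteq\ker\phi$, hence lies in $\ker\phi$, is reduced, and so must vanish, forcing $q\in J$.

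The injectivity is a bookkeeping check: $\phi$ sends $d_{1,5}^{a_1}d_{2,5}^{a_2}d_{3,5}^{a_3}d_{4,5}^{a_4}d_{5,5}^{a_5}$ to $\pm\,d_{2,4}^{\,pa_2+a_3}\,d_{4,4}^{\,pa_4+a_5}\,h^{\,a_1+a_3+a_5}$, and when $0\le a_3,a_5<p$ the exponent triple $(pa_2+a_3,\ pa_4+a_5,\ a_1+a_3+a_5)$ recovers $a_3$ and $a_5$ as the residues modulo $p$, then $a_2$ and $a_4$, and finally $a_1$; so distinct reduced monomials map to distinct, hence linearly independent, monomials of $\F[d_{2,4},d_{4,4},h]$. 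Once this monomial picture of $\phi$ is in place, the whole argument is short, and I expect the only genuinely delicate points to be organizational: keeping track of which polynomial ring each ideal is formed in, invoking the algebraic independence of the Dickson invariants together with the Wilkerson recursion $d_{i,5}=d_{i,4}^p-d_{i-1,4}h$ to treat $\F[d_{1,5},\ldots,d_{5,5}]$ as an abstract polynomial ring mapping into $A$, and — if the ambient ideal is taken in $\F[x_1,\ldots,x_5]$ — the faithful-flatness reduction to the computation inside $A$. The coprimality of the two leading monomials and the residue count that gives injectivity are then routine.
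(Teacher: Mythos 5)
The paper offers no proof of this statement: it appears only as a conjecture, supported by machine computation for primes $p\le 17$, so there is no argument of the authors' to compare yours against. As far as I can check, your argument is correct and complete, and it establishes the conjecture for all $p$. The pivotal reductions are right: modulo $\langle d_{1,4},d_{3,4}\rangle$ the Wilkerson recursion gives $d_{1,5}\equiv -h$, $d_{2,5}\equiv d_{2,4}^p$, $d_{3,5}\equiv -d_{2,4}h$, $d_{4,5}\equiv d_{4,4}^p$ and $d_{5,5}=-d_{4,4}h$, so the left-hand side is exactly the kernel of a homomorphism of $\F[d_{1,5},\ldots,d_{5,5}]$ into the polynomial ring $\F[d_{2,4},d_{4,4},h]$ sending each generator to a nonzero scalar times a monomial. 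Both proposed generators visibly lie in this kernel (including for $p=2$, where $(-1)^p=1=-1$), the leading monomials $d_{3,5}^p$ and $d_{5,5}^p$ involve distinct variables so every coset modulo $\langle f,g\rangle$ has a representative with all $d_{3,5}$- and $d_{5,5}$-exponents below $p$, and your residue count shows $(a_1,\ldots,a_5)\mapsto(pa_2+a_3,\,pa_4+a_5,\,a_1+a_3+a_5)$ is injective on that range, which forces the reduced representative of any kernel element to vanish.

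Two points deserve to be made explicit. First, the freeness of $\F[x_1,\ldots,x_5]$ over $A$, which you invoke only for the reading of the ideal inside the full polynomial ring, needs a sentence of justification: $d_{1,4},\ldots,d_{4,4},h$ form a homogeneous system of parameters (note $x_5$ satisfies the monic equation $D_4(x_5)^{p-1}-h=0$ over $A$), and $\F[x_1,\ldots,x_5]$ is Cohen--Macaulay, hence a free graded $A$-module; faithful flatness then gives $\langle d_{1,4},d_{3,4}\rangle\F[x_1,\ldots,x_5]\cap A=\langle d_{1,4},d_{3,4}\rangle A$, so the two readings of the intersection agree. Second, the Gr\"obner-basis machinery is slightly more than you use: uniqueness of normal forms is never needed, only the termination of the division algorithm producing \emph{some} reduced representative of each coset, after which your injectivity check does the rest. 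With those remarks added, your argument resolves the conjecture, which the paper leaves open.
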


Conjecture~\ref{subspace} has been verified for primes less than or equal to $17$.

If $E$ has partition $(4,1)$ then $E$ is in the variety determined by
$$\langle d_{5,5}^p-d_{4,5}d_{1,5}^p,d_{3,5},d_{2,5}\rangle \subset \langle d_{5,5}^p-d_{4,5}d_{1,5}^p,d_{3,5}^p-d_{2,5}d_{1,5}^p\rangle.$$
Thus we expect a subgroup with partition $(2,2,1)$ to satisfy $d_{2,5}(E)\not=0$.

Since $d_{5,5}(E)$ is non-zero for any rank 5 subgroup, we can replace  $d_{5,5}^p-d_{4,5}d_{1,5}^p$
with $v_{14}-1$. If $(v_{14}-1)(E)=0$, then $d_{1,5}(E)\not=0$ and so we can replace
$d_{3,5}^p-d_{2,5}d_{1,5}^p$ with $v_{13}^p-v_{12}$. Thus
$$\langle d_{5,5}^p-d_{4,5}d_{1,5}^p,d_{3,5}^p-d_{2,5}d_{1,5}^p\rangle=\langle v_{14}-1,v_{13}^p-v_{12}\rangle.$$

\begin{example} $\F_{p^6}$ is an extension of both $\F_{p^2}=\F_p(b)$ and $F_{p^3}=\F_p(a)$.
The subgroup $E:={\rm Span}_{\F_p}\{1,a,a^3,b,ba\}$ contains the subgroup $\{1,b,a,ab\}$, 
which is a vector space over $\F_{p^2}$.
However $E=\F_{p^3}\oplus (a+b)\F_{p^2}$ so the partition is  $(3,2)$ rather than $(2,2,1)$.
Note that, using Theorem~\ref{embedding-thm}, a rank 5 subgroup $E$ is a subgroup of a dilation of
$\F_{p^6}$ if and only if $d_{1,5}(E)\not=0$ and $v_1(E)=v_{12}(E)^p=v_{13}(E)^{p^2}=v_{14}(E)^{p^3}$.
\end{example}

Using Theorem~\ref{rk4p^2}, if $E$ is in the variety determined by 
$$\langle d_{1,4}^{p^2}d_{2,4}^pd_{3,4}-d_{4,4}^pd_{1,4}^{p^2+1}-d_{3,4}^{p^2+1}\rangle \cap \F[d_{1,5},\ldots,d_{5,5}]$$ then
$E$ contains a dilation of $\F_{p^2}$. For convenience, denote
$r_1:=d_{5,5}^p-d_{4,5}d_{1,5}^p$ and $r_2:=d_{3,5}^p-d_{2,5}d_{1,5}^p$.

\begin{conjecture}\label{rk5p^2}
$$\langle d_{1,4}^{p^2}d_{2,4}^pd_{3,4}-d_{4,4}^pd_{1,4}^{p^2+1}-d_{3,4}^{p^2+1}\rangle \cap \F[d_{1,5},\ldots,d_{5,5}]=
\langle r_1^{p^2+1}-r_2^{p^2}(d_{5,5}^pd_{2,5}-d_{4,5}d_{3,5}^p)\rangle.$$
\end{conjecture}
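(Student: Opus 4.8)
Throughout write $A=\F[d_{1,4},d_{2,4},d_{3,4},d_{4,4},h]$ with $h=D_4^{p-1}(x_5)$, so that $d_{i,5}=d_{i,4}^p-d_{i-1,4}h$ and $B:=\F[d_{1,5},\ldots,d_{5,5}]$ is a subring of $A$; set $H_4:=d_{1,4}^{p^2}d_{2,4}^pd_{3,4}-d_{4,4}^pd_{1,4}^{p^2+1}-d_{3,4}^{p^2+1}$, and let $G:=r_1^{p^2+1}-r_2^{p^2}\bigl(d_{5,5}^pd_{2,5}-d_{4,5}d_{3,5}^p\bigr)$ denote the conjectured generator, which lies in $B$ by construction. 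The plan is to prove the two inclusions of ideals by combining a divisibility statement with a comparison of zero sets. First observe that $\langle H_4\rangle_A$ is a radical ideal of the polynomial ring $A$: by Theorem~\ref{rk4p^2}, $H_4$ is a nonzero scalar multiple of the $GL_4(\F_p)$-orbit product of $d_{1,2}$, which is a product of distinct irreducible quadratics, hence squarefree. Since the contraction of a radical ideal is radical, $\langle H_4\rangle_A\cap B$ is radical; granting that $\langle G\rangle_B$ is also radical (which drops out below) and that $G\in\langle H_4\rangle_A$, it then suffices, taking $\F$ algebraically closed as elsewhere, to show that $\langle H_4\rangle_A\cap B$ and $\langle G\rangle_B$ cut out the same subvariety of $\F^5$.

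The single piece of hard input is the divisibility $d_{1,2}\mid G$ in $\F[x_1,\ldots,x_5]$. I would prove this by working modulo the ideal $\langle d_{1,2}\rangle$ and iterating the relations $d_{i,k+1}=d_{i,k}^p-d_{i-1,k}h_k$ with $h_2=D_2^{p-1}(x_3)$, $h_3=D_3^{p-1}(x_4)$, $h_4=D_4^{p-1}(x_5)$: starting from the congruences $d_{1,3}^pd_{2,3}\equiv d_{3,3}^p$ and $d_{1,4}d_{2,3}\equiv d_{3,4}$ already isolated inside the proof of Theorem~\ref{rk4p^2}, one pushes them one further Dickson level to evaluate $r_1$, $r_2$ and $d_{5,5}^pd_{2,5}-d_{4,5}d_{3,5}^p$ modulo $\langle d_{1,2}\rangle$ and checks that $G\equiv 0$. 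This inductive congruence computation — of the same flavour as, but longer than, the one in the proof of Theorem~\ref{codim1}, and the step in which the precise exponents $p^2+1$ and $p^2$ appearing in $G$ are forced — is where I expect the real work to lie, and is presumably why the statement is stated only as a conjecture (checked by computer for small $p$).

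Granting $d_{1,2}\mid G$, a direct degree count shows $G$ is homogeneous of degree $(p^2+1)(p^6-p)=p^8+p^6-p^3-p$, and this equals $[GL_5(\F_p):P]\cdot\deg d_{1,2}$, where $P$ is the stabiliser of $d_{1,2}$ in $GL_5(\F_p)$ — namely the stabiliser of a $2$-dimensional subspace of the space of linear forms — so $[GL_5(\F_p):P]=(p^5-1)(p^4-1)/\bigl((p^2-1)(p-1)\bigr)$ and $\deg d_{1,2}=p^2-p$. Thus $G$ is a $GL_5(\F_p)$-invariant that is divisible by $d_{1,2}$ and whose degree equals that of the $GL_5(\F_p)$-orbit product $\Pi$ of $d_{1,2}$; since the distinct $GL_5(\F_p)$-translates of $d_{1,2}$ are pairwise coprime, $\Pi\mid G$, and by degrees $G$ is a nonzero scalar times $\Pi$. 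In particular $G$ is squarefree in $\F[x_1,\ldots,x_5]$, hence in $B$, so $\langle G\rangle_B$ is radical; also $H_4$ is a scalar multiple of the $GL_4(\F_p)$-orbit product of $d_{1,2}$, whose quadratic factors are among those of $\Pi$, so $H_4\mid G$ in $\F[x_1,\ldots,x_5]$ and, as $\F[x_1,\ldots,x_5]$ is module-finite over the normal ring $A$ while $G,H_4\in A$, already $H_4\mid G$ in $A$, giving $\langle G\rangle_B\subseteq\langle H_4\rangle_A\cap B$. For the reverse at the level of varieties: $B\hookrightarrow A$ is module-finite, so $\operatorname{Spec}A\to\operatorname{Spec}B$ is closed and $V(\langle H_4\rangle_A\cap B)$ is the image of $V(H_4)$. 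On the locus $d_{5,5}\neq 0$ a point of $\F^5$ is the tuple of Dickson invariants of a genuine rank-$5$ subgroup $E$, and $G(E)=0$ holds precisely when $\Pi(E)=0$, i.e.\ when $E$ contains a dilation of $\F_{p^2}$ (Dickson's rank-$2$ result recalled in Section~\ref{prelim}), i.e.\ when some rank-$4$ subgroup $W\le E$ contains that dilation, i.e.\ — by Theorem~\ref{rk4p^2} — when $H_4(W)=0$; evaluating $d_{i,5}=d_{i,4}^p-d_{i-1,4}D_4^{p-1}(x_5)$ at a basis of $E$ extending a basis of $W$ then exhibits a point of $V(H_4)$ mapping to the given tuple. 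Hence $V(G)$ and $V(\langle H_4\rangle_A\cap B)$ agree on $d_{5,5}\neq 0$. Finally $d_{5,5}$ is a coordinate variable of $B$ and $G|_{d_{5,5}=0}\neq 0$ (its reduction contains the monomial $d_{4,5}d_{3,5}^{p^3+p}$), so no irreducible component of $V(G)$ lies in $\{d_{5,5}=0\}$; together with $\langle G\rangle_B\subseteq\langle H_4\rangle_A\cap B$ this forces $V(G)=V(\langle H_4\rangle_A\cap B)$, and radicality of both ideals yields $\langle H_4\rangle_A\cap B=\langle G\rangle_B$.

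The principal obstacle is the congruence computation establishing $d_{1,2}\mid G$; a secondary, more routine difficulty is the bookkeeping in the last step matching a rank-$4$ subgroup $W\le E$ with the corresponding closed point of $\operatorname{Spec}A$, which rests on evaluating the Wilkerson relation $d_{i,5}=d_{i,4}^p-d_{i-1,4}D_4^{p-1}(x_5)$ at a basis adapted to the decomposition $E=W\oplus\F_p c_5$.
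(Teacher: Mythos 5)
The paper does not actually prove this statement: it is recorded as a conjecture, verified by Gr\"obner-basis elimination in Magma for primes $p\le 23$, so there is no proof of record to compare yours against. That said, your surrounding reduction is, as far as I can check, sound and goes beyond what the paper offers. The degree count is right: $\deg G=(p^2+1)(p^6-p)=p^8+p^6-p^3-p=(p^2-p)\cdot\tfrac{(p^5-1)(p^4-1)}{(p^2-1)(p-1)}$, which is the degree of the $GL_5(\F_p)$-orbit product $\Pi$ of $d_{1,2}$; the distinct translates of $d_{1,2}$ are pairwise coprime (their irreducible quadratic factors determine the underlying $2$-dimensional subspace); $G$ is a nonzero $GL_5(\F_p)$-invariant with $G|_{d_{5,5}=0}\ne 0$; and the commutative algebra --- radicality of $\langle H_4\rangle_A$ and of its contraction, identification of $V(\langle H_4\rangle_A\cap B)$ with the image of $V(H_4)$ under the finite map $\operatorname{Spec}A\to\operatorname{Spec}B$, promotion of $H_4\mid G$ from $\F[x_1,\ldots,x_5]$ to $A$ via normality, and the parameterisation of points with $d_{5,5}\ne 0$ by rank-$5$ subgroups --- is all standard and correctly deployed. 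In effect you have shown that the conjecture is \emph{equivalent} to the single assertion that $d_{1,2}$ divides $G$ in $\F[x_1,\ldots,x_5]$ (the conjectured ideal equality conversely forces $H_4\mid G$ and hence $d_{1,2}\mid G$).

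That assertion, however, is exactly where the proposal stops being a proof. You write that you ``would prove this by working modulo $\langle d_{1,2}\rangle$ and iterating the relations'' $d_{i,k+1}=d_{i,k}^p-d_{i-1,k}h_k$; that is a plan, not an argument, and you concede it is where the real work lies. Since the divisibility $d_{1,2}\mid G$ carries the entire content of the conjecture, nothing in your write-up excludes the possibility that the congruence computation fails to close up for some prime --- which is precisely the uncertainty that led the authors to leave the statement as a conjecture checked only for $p\le 23$. To turn this into a theorem you must actually carry out the reduction of $r_1$, $r_2$ and $d_{5,5}^pd_{2,5}-d_{4,5}d_{3,5}^p$ modulo $\langle d_{1,2}\rangle$ (or modulo $\langle H_4\rangle$, in the style of the inductive congruences in the proofs of Theorems~\ref{codim1} and~\ref{rk4p^2}) and verify that $G$ reduces to zero for all $p$. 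Until then you have an attractive and apparently correct reduction of the conjecture to one explicit divisibility, but not a proof of it.
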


Conjecture~\ref{rk5p^2} has been verified for primes less than or equal to $23$.

Thus we expect a subgroup $E$ with partition $(2,1,1,1)$ to satisfy
$$\left(r_1^{p^2+1}-r_2^{p^2}\left(d_{5,5}^pd_{2,5}-d_{4,5}d_{3,5}^p\right)\right)(E)=0$$ with either
$r_1(E)\not=0$ or $r_2(E)\not=0$.

\section{Acknowledgements}
  We thank the anonymous referee for a very careful reading of an earlier draft and for a number of helpful comments.
 The computer algebra program Magma \cite{magma} was very helpful in our study of this problem.  

\ifx\undefined\bysame
\newcommand{\bysame}{\leavevmode\hbox to3em{\hrulefill}\,}
\fi

\end{document}